\DeclareSymbolFont{bbold}{U}{bbold}{m}{n}
\DeclareSymbolFontAlphabet{\mathbbold}{bbold}
\def\qmod#1#2{{\hbox{}^{\displaystyle{#1}}}\!\big/\!\hbox{}_{
\displaystyle{#2}}}
 \def\psp#1#2%
 \def\psb#1#2%
 \def\pscr#1#2#3%
\def\A{{\mathbb A}}
\def\C{{\mathbb C}}
\def\G{{\mathbb G}}
\def\N{{\mathbb N}}
\def\Q{{\mathbb Q}}
\def\R{{\mathbb R}}
\def\Z{{\mathbb Z}}
\def\map{\longrightarrow}
\def\textmap#1{\mathop{\vbox{\ialign{
                                  ##\crcr
      ${\scriptstyle\hfil\;\;#1\;\;\hfil}$\crcr
      \noalign{\kern 1pt\nointerlineskip}
      \rightarrowfill\crcr}}\;}}
\def\bigtextmap#1{\mathop{\vbox{\ialign{
                                  ##\crcr
      ${\hfil\;\;#1\;\;\hfil}$\crcr
      \noalign{\kern 1pt\nointerlineskip}
      \rightarrowfill\crcr}}\;}}
\newcommand{\cal}{\mathcal}
\def\textlmap#1{\mathop{\vbox{\ialign{
                                  ##\crcr
      ${\scriptstyle\hfil\;\;#1\;\;\hfil}$\crcr
      \noalign{\kern-1pt\nointerlineskip}
      \leftarrowfill\crcr}}\;}}
\def\cg{{\mathfrak c}}
\def\lg{{\mathfrak l}}
\def\mg{{\mathfrak m}}
\def\ng{{\mathfrak n}}
\def\pg{{\mathfrak p}}
\def\tg{{\mathfrak t}}
\def\Ag{{\mathfrak A}}
\def\Bg{{\mathfrak B}}
\def\Cg{{\mathfrak C}}
\def\Mg{{\mathfrak M}}
\def\Ng{{\mathfrak N}}
\def\Pg{{\mathfrak P}}
\def\Tg{{\mathfrak T}}
\theoremstyle{remark}
\newtheorem{ex}{Example}[section]
\newtheorem*{pb}{Problem}
\newtheorem{sz}{Satz}[section]
\theoremstyle{remark}
\theoremstyle{plain}
\newtheorem{thry}[sz]{Theorem}
\newtheorem{pr}[sz]{Proposition}
\newtheorem{co}[sz]{Corollary}
\newtheorem{dt}[sz]{Definition}
\newtheorem{lm}[sz]{Lemma}
\newtheorem{re}[sz]{Remark}
\def\tr{\mathrm {Tr}}
\def\End{\mathrm {End}}
\def\Aut{\mathrm {Aut}}
\def\GL{\mathrm {GL}}
\def\PGL{\mathrm {PGL}}
\def\SL{\mathrm {SL}}
\def\PSL{\mathrm {PSL}}
\def\Pic{\mathrm {Pic}}
\def\Hom{\mathrm{Hom}}
\def\Tors{\mathrm{Tors}}
\def\T{\mathrm{T}}
\def\id{ \mathrm{id}}
\def\p{\mathrm{p}}
\def\Ext{\mathrm{Ext}}
\def\Spec{\mathrm{Spec}}
\newcommand\smvee{{\hskip -0.1ex \raise 0.2ex\hbox{$\scriptscriptstyle\vee$}}\hskip -0,3ex}
\def\Aff{{\rm Aff}}
\def\bpm{\begin{pmatrix}}
\def\epm{\end{pmatrix}}	
\def\edf{\coloneq}
\def\trp#1{\tensor[^{\mathrm{t}}]{\hskip -0.3ex#1}{}}
\def\cof{\mathrm{cof}}
\def\trcof{\trp{\,\cof}}
\begin{document}

\title{On the classification of Inoue surfaces}
\author{Zahraa Khaled \and Andrei Teleman}
\thanks{We are  grateful to Matei Toma for his useful suggestions concerning the formalism used in the construction of type I Inoue surfaces, and we  thank the referee for pointing  out the important reference \cite{In2} }

\address{Aix Marseille Univ, CNRS, I2M,  Marseille, France, email: zahraa.k.khaled@gmail.com, andrei.teleman@univ-amu.fr}

\begin{abstract} We prove that any Inoue surface admits a unique holomorphic connection and we infer that two Inoue surfaces $S=H\times\mathbb{C}/G$, $S'=H\times\mathbb{C}/G'$ are biholomorphic if and only if   $G$, $G'$ are conjugate in the group of affine transformations of $H\times\mathbb{C}$. This result allows us to prove explicit classification theorems for Inoue surfaces: Let $\mathcal{M}$ be the set of $\mathrm{SL}(3,\mathbb{Z})$-matrices $M$ with a real eigenvalue $\alpha>1$ and two non-real eigenvalues, and let $\mathcal{N}^\pm $ the set of $\mathrm{GL}(2,\mathbb{Z})$-matrices $N$ with a real eigenvalue $\alpha>1$  and $\det(N)=\pm 1$.
	
We prove that: 
\begin{enumerate}
	\item For any $\mathrm{GL}(3,\mathbb{Z})$-similarity class $\mathfrak{M}\in \mathcal{M}/\sim$, there exists exactly two biholomorphism classes of type I Inoue surfaces. 
\item \begin{itemize}
	\item[(+)] For any $\mathrm{GL}(2,\mathbb{Z})$ similarity class $\mathfrak{N}=[N]\in \mathcal{N}^+/\sim$ and positive integer $r\in\mathbb{N}^*$, we have a finite set of deformation classes of type II Inoue surfaces. This set is parameterised by the quotient of $\mathbb{Z}^2/(I_2-N)\mathbb{Z}^2+r\mathbb{Z}^2$ by an action of the ``positive centraliser" $Z^+_{\mathrm{GL}(2,\mathbb{Z})}(N)$ of $N$ in $\mathrm{GL}(2,\mathbb{Z})$. The set of biholomorphism types corresponding to a deformation class, endowed with its  natural topology, can be identified with either $\mathbb{C}^*$ or $\mathbb{C}$.
	\item[(-)]   For any $\mathrm{GL}(2,\mathbb{Z})$-similarity class $\mathfrak{N}=[N]\in \mathcal{N}^-/\sim$ and positive integer $r\in\mathbb{N}^*$, we have a finite set of biholomorphism classes of type III Inoue surfaces. This set is parameterised by the quotient of $\mathbb{Z}^2/(I_2+N)\mathbb{Z}^2+r\mathbb{Z}^2$ by an action of   $Z^+_{\mathrm{GL}(2,\mathbb{Z})}(N)$.	 \end{itemize} 
	 In both cases $Z^+_{\mathrm{GL}(2,\mathbb{Z})}(N)$-can be identified with the stabiliser of $N$ in $\mathrm{PGL}(2,\mathbb{Z})$ and  is an infinite cyclic group (see section \ref{Z+-section}). 
\end{enumerate}
\vspace{1mm}
Taking into account the Latimer-MacDuffee theorem and a classical finiteness theorem for ideal classes in orders, it follows that:
\begin{enumerate}
\item 	For  any  polynomial $\chi\in \chi(\mathcal{M})$ we have only finitely many biholomorphism classes of type I surfaces.
\item For any  pair  $(\chi,r)\in   \chi(\mathcal{N}^+)\times\mathbb{N}^*$ we have only finitely many deformation classes of type II Inoue surfaces.
\item For any pair  $(\chi,r)\in \chi(\mathcal{N}^-)\times\mathbb{N}^*$ we have only finitely many biholomorphism classes of type   III  Inoue surfaces.
\end{enumerate}

\end{abstract}

\subjclass[2020]{32J15, 32Q57}

\maketitle

\tableofcontents

\section{Introduction} \label{intro}

In his renowned article \cite{In}, Inoue introduced three  types  of non-Kählerian surfaces obtained as quotients of $U\edf H\times\C$ (where $H\edf\{w\in \C|\ \Im(w)>0\}$) by certain groups of affine transformations   acting properly discontinuously on $U$. Moreover Inoue proves that any  surface $S$ belonging to one of his three families is a class VII surface with $b_2=0$ (\cite{Ko1}-\cite{Ko3}, \cite{BHPV}, \cite{Na}, \cite{Te-survey}) and   has the following remarkable geometric properties:
\begin{enumerate}[(A)]
\item 	$S$ admits a holomorphic foliation, or, equivalently, the tangent sheaf admits an invertible subsheaf (see \cite[Condition (C) p. 280]{In} and section \ref{tangent-bundle-sect} in this article).
\item $S$ contains no curve, or, equivalently, for any nontrivial invertible sheaf ${\cal N}$ on $S$, one has $H^0(S,{\cal N})=0$ (see \cite{In} and Theorem \ref{ThInoue} in this article).
\end{enumerate}

The fondamental theorem \cite[Theorem p. 280]{In} states that,  conversely, any class VII surface  with $b_2=0$ having these two properties is biholomorphic to a an Inoue surface, i.e. to a surface belonging to one of Inoue's three families. Note  that in fact,  by the main result of \cite{Te-Bo}, any class VII surface with $b_2=0$ satisfying (B) is biholomorphic to an Inoue surface.  Taking into account Kodaira's classification of class VII surfaces with $b_2=0$ admitting curves \cite[Theorem 34, p. 699]{Ko2}, this result shows that the classification of class VII surfaces with $b_2=0$ reduces to a detailed classification (i.e. an explicit parametrisation of the set of biholomorphism classes) of Hopf surfaces and Inoue surfaces. For primary Hopf surfaces such a detailed classification can be found in \cite{We}. 
\vspace{1mm}

 Inoue surfaces  have many interesting properties, for instance:
 \begin{itemize}
	\item[-] Any Inoue surface admits a geometric structure compatible with its complex structure \cite{Wa}. More precisely  a type I Inoue surface admits a compatible geometric structure of type $\mathrm{Sol}^4_0$,  a type II Inoue surface with parameter $t\in \R$ and  a type III Inoue surface admits a compatible geometric structure of type $\mathrm{Sol}^4_1$, whereas a type II Inoue surface with parameter $t\in \C\setminus \R$ admits a compatible geometric structure of type $\mathrm{Sol}'^4_1$ (see \cite[Proposition 9.1]{Wa}).
	\item[-] The Bott-Chern class $c_1^{\rm BC}({\cal K}_S)$ of the canonical line bundle  ${\cal K}_S$  of any Inoue surface $S$ is non-trivial and pseudo-effective (see \cite[Remark 4.2]{Te-cone}). This might look surprising taking into account that   $\mathrm{kod}(S)=-\infty$.  
	\item[-] An Inoue surface $S$ comes with a tautological affine structure, which, by a result of Klingler, is its unique affine structure \cite[Lemma 4.3]{Kl}.  In other words, $S$ has a unique holomorphic connection $\nabla_S$ with vanishing torsion and curvature. A stronger result of Dumitrescu states that $\nabla_S$ is the unique holomorphic connection with vanishing torsion  \cite[Proposition 3.1]{Du} of $S$, whereas our Theorem \ref{HolConn} proved in this article states that $\nabla_S$ is its unique holomorphic connection  (without any restriction).  Compact complex surfaces admitting a holomorphic connections have been classified in \cite{IKO}.
\end{itemize}

We  also mention that  the  constructions which yield Inoue surfaces have been recently generalised in higher dimensions by Oeljeklaus-Toma  \cite{OT} and Miebach-Oeljeklaus \cite{MO} using number theoretical methods. The resulting higher dimensional generalisations of Inoue surfaces are called Oeljeklaus-Toma manifolds and generate already significant interest in the literature.

The class of Inoue surfaces plays a fundamental role in the theory of surfaces (see for instance \cite[Section V.19]{BHPV}) so, naturally, they are abundantly studied in the literature.  However, to our surprise, we did not find in the literature an explicit classification theorem for Inoue surfaces, as we have   for primary Hopf surfaces \cite{We}. 
 
Also to our surprise we realised that the problem is more difficult and more interesting than we expected. The  first difficulty  we encountered comes from the traditional notations used in the literature for these surfaces. For instance, the ``traditional'' notation used for  Inoue surfaces of the first type is $S_M$, where $M\in\SL(3,\Z)$ is a matrix with $\Spec_\C(M)\supsetneq  \Spec_\R(M)\subset ]1,+\infty[$ (see \cite[section 2]{In}). This notation suggests that for any such matrix $M$ one has a well defined complex surface $S_M$. This is not true. In fact, in order to construct an Inoue surface of the first type, one also has to fix a non-real eigenvalue $\beta\in  \Spec_\C(M)\setminus\R$ and a pair $(a,b)\in\R^3\times\C^3$ consisting of a real eigenvector for the real eigenvalue $\alpha=|\beta|^{-2}\in \Spec_\R(M)$ and an eigenvector for $\beta$. Having the triple  $(\beta,a,b)$ one can recover $M$, but not vice-versa. Therefore the  notation  which is appropriate for classification purposes  is $S^{\beta}_{a,b}$, not $S_M$. 
For the first type of surfaces our final result (see Theorem \ref{Class-Type-I-similitude}, Remark \ref{Class-Type-I-similitude-rem}) shows that biholomorphism classes of first type Inoue  surfaces correspond bijectively to pairs $(\Mg,\beta)$ consisting of a $\GL(3,\Z)$-similarity class of $\SL(3,\Z)$-matrices of the considered type and a non-real eigenvalue $\beta\in\Spec(\Mg)\setminus\R$. Therefore to any similarity class $\Mg$ of such matrices correspond {\it two} biholomorphism types of type 1 Inoue surfaces.   The fact that the data of a matrix $M$ as above is not sufficient to determine a biholomorphism class has already been  noticed by Inoue in \cite{In2}, who introduced the notations $S_M^\pm$  in order to emphasise that, replacing the eigenvalue $\beta$ by its conjugate, will change the isomorphism type of the obtained surface. We are grateful to the referee for pointing us out this  reference. Unfortunately this important remark has not been taken into account in the subsequent mathematical literature in English, so the traditional notation for these surfaces remained $S_M$ (see for instance \cite[p.  145-146]{Wa}).

 
 A concrete consequence of our classification result: for any type I Inoue surface $S$ we have $S\not\simeq\bar S$ (see Corollary \ref{S-non-isom-barS}), in particular $S$ does not admit any Real structure. This should be compared to the classification results for Real structures on other classes of non-Kählerian surfaces \cite{Fr}, \cite{Kh}. 

Note that, by a renowned Theorem of Latimer and MacDuffee (see \cite{LMD}, \cite{Ta}), for any  degree $n$ polynomial  $\chi\in\Z[X]$ which in irreducible in $\Q[X]$,  the set of $\GL(n,\Z)$-similarity classes of matrices with integer coefficients and  characteristic polynomial $\chi$ correspond bijectively to the set of ideal classes of the order $\Z[\alpha]$, where $\alpha$ is a root of $\chi$. Therefore, by a fundamental theorem in Number Theory \cite[Theorem 3, p. 128]{BoSh}, this set is  finite\footnote{This finiteness theorem is usually stated  for a maximal order, i.e. for the ring of integers $O_K$ of an algebraic number field $K$. }. 

Therefore, for any   polynomial $X^3-\theta_2 X^2+\theta_1X-1\in\Z[X]$ with a root   $\alpha\in ]1,+\infty[$ and two non-real roots, we have finitely many biholomorphism classes of type I Inoue surfaces, namely two biholomorphism classes for any equivalence class of ideals of the order $\Z[\alpha]$. We give an explicit example in which the order $\Z[\alpha]$ is not maximal (see Example \ref{Louboutin}).

\vspace{2mm}

For surfaces of the second type, the situation is even more complicated: the ``traditional'' notation for a type II Inoue surface is $S_{N,p,q,r,t}^+$, where $N\in\SL(2,\Z)$ has $\Spec(N)\subset (0,+\infty)\setminus\{1\}$ and $(p,q,r,t)\in\Z\times\Z\times\Z^*\times\C$. This notation suggests that any 5-tuple $(N,p,q,r,t)$ as above yields a well defined surface  denoted $S_{N,p,q,r,t}^+$. This again is not true; the construction also needs a pair $(a,b)\in\R^2\times\R^2$, where $a$   is an eigenvector for the eigenvalue $\alpha\in\Spec(N)\cap]1,+\infty[$ and $b$ is   an eigenvector for $\alpha^{-1}$.

For this class of surfaces we will adopt the notation 
$S_{a,b,c,t}^{\alpha,r}$, where:
\begin{itemize} 
	\item $t\in\C$, $r\in \N^*$ and $\alpha\in ]1,+\infty[$ is {\it $S^+$-admissible}, i.e. $\alpha+\alpha^{-1}\in\N$.
	\item $(a,b)\in \R^2\times\R^2$ is a linearly independent pair which is $\alpha$-{\it compatible}, i.e.
$$
N(\alpha,a,b)\coloneq \begin{pmatrix}
a_1 &b_1\\
a_2& b_2	
\end{pmatrix}\begin{pmatrix}
\alpha &0\\
0&\alpha^{-1}	
\end{pmatrix}\begin{pmatrix}
a_1 &b_1\\
a_2& b_2	
\end{pmatrix}^{-1}\in M_2(\Z).
$$
\item $c\in\R^2$ is $(a,b,r)$-compatible in the sense of the compatibility condition (\ref{CompCond}).
\end{itemize} 
The set of $(a,b,r)$-compatible vectors is identified with $\Z^2$ via a bijection which depends on $(a,b,r)$. Therefore we will replace Inoue's pair $(p,q)\in\Z^2$  by the pair of coefficients $c=(c_1,c_2)$ which intervenes effectively in the construction of the surface as a quotient \cite[p. 276]{In}.

For any $S^+$-admissible $\alpha\in ]1,+\infty[$, let ${\cal N}_\alpha\subset \SL(2,\Z)$ be the set of $\SL(2,\Z)$ matrices with $\alpha\in\Spec(N)$. By the Latimer and MacDuffee Theorem, the set of similarity classes ${\cal N}_\alpha/\sim$ is finite.

The pair $(\alpha,r)$ is a biholomorphism invariant. We will identify the set of biholomorphism classes of type II Inoue surfaces $S_{a,b,c,t}^{\alpha,r}$ with $N(\alpha,a,b)\in{\cal N}_\alpha$ with a quotient set $\mathscr{Q}_{\alpha,r}$ of the set of parameters  $(a,b,c,t)$,  where $(a,b,c)$ satisfies the above compatibility conditions (see Theorem \ref{Classif-Theorem-II}). We have a natural map 
$$
\Psi_{\alpha,r}:\mathscr{Q}_{\alpha,r}\to {\cal N}_\alpha/\sim, \ [S_{a,b,c,t}^{\alpha,r}]\mapsto [N(\alpha,a,b)].
$$ 

Let $\Ng\in  {\cal N}_\alpha/\sim$,  choose $N=N(\alpha,a,b)\in {\cal N}_\alpha$, and let $Z^+_{\GL(2,\Z)}(N)$ be ``the positive centraliser of $N$'' in $\GL(2,\Z)$, which is an infinite cyclic group,  see the definition formula (\ref{PosCentrDef}) and section \ref{Z+-section}. 
We will prove (see Theorem \ref{ConnCompOfFibre}) that the fibre $\Psi_{\alpha,r}^{-1}(\Ng)$ over   $\Ng$ comes with a natural topology and that the set of connected components of this fibre can be naturally identified   with the quotient of 
$$\Z_{N,r}\edf \Z^2/(I_2-N)\Z^2+r\Z^2$$
 by the  group  $Z^+_{\GL(2,\Z)}(N)$ acting on $\Z_{N,r}$ by $K*[p]\edf [\varepsilon_K Kp]$.
 
 Moreover, we will prove that any connected component of the fibre $\Psi_{\alpha,r}^{-1}(\Ng)$ can be identified to either $\C^*$ or $\C$, the latter case occurring when condition $(C)$ stated in  Proposition \ref{TheConnComp} is satisfied. 
 Using Inoue’s \cite[Proposition 4 p. 278]{In}
  one can prove easily that two type II Inoue surfaces $S_{a,b,c,t}^{\alpha,r}$, $S_{a',b',c',t'}^{\alpha',r'}$ are deformation equivalent 
 if and only if $(\alpha,r)=(\alpha',r')$ and the images of the the 4-tuples $(a,b,c,t)$, $(a',b',c',t')$ in the quotient space $\mathscr{Q}_{\alpha,r}$ belong to the same connected component. 
 
 Therefore, for any pair $(\alpha,r)$ as above with $\alpha$ admissible, we have only finitely many deformation classes of type II Inoue surfaces, namely a finite number of similarity classes of $\SL(2,\Z)$-matrices with eigenvalue  $\alpha$,  and a finite number of deformation classes for  each such similarity class.
 
 At the end of section \ref{ClassTypeII-section} we treat in detail the cases $\theta=3$ ($\alpha=\frac{3+\sqrt{5}}{2}$) and $\theta=4$ ($\alpha= 2+\sqrt{3}$) specifying the number of deformation classes corresponding to any $r\in\N^*$ and the isomorphism type of the connected components of the space of isomorphism classes. The main tools are: the algorithmic version of the Latimer-MacDuffee theorem of \cite{BVdM} and our Theorem \ref{Z+Calcul-eng} which specifies a generator of the positive centraliser  $Z^+_{\GL(2,\Z)}(N)$ for a large class of $\SL(2,\Z)$ matrices.

\vspace{3mm}

We will use similar methods for the classification of type III surfaces, i.e. of  the surfaces denoted traditionally by $S_{N,p,q,r}^-$. For this surfaces we will adopt the notation $S_{a,b,c}^{\alpha,r}$, where
\begin{itemize} 
	\item   $r\in \N^*$ and $\alpha\in ]1,+\infty[$ is {\it $S^-$-admissible}, i.e. $\alpha-\alpha^{-1}\in\N$.
	\item $(a,b)\in \R^2\times\R^2$ is a linearly independent pair which is $\alpha$-{\it compatible}, i.e.
$$
N(\alpha,a,b)\coloneq \begin{pmatrix}
a_1 &b_1\\
a_2& b_2	
\end{pmatrix}\begin{pmatrix}
\alpha &0\\
0&-\alpha^{-1}	
\end{pmatrix}\begin{pmatrix}
a_1 &b_1\\
a_2& b_2	
\end{pmatrix}^{-1}\in M_2(\Z).
$$
\item $c\in\R^2$ is $(a,b,r)$-compatible in the sense of  (\ref{CompCond-}).
\end{itemize}

We define ${\cal N}_\alpha^-\edf \{N\in\GL(2,\Z)|\ \det(N)=-1,\ \alpha\in\Spec(N)\}$ and we let $\mathscr{Q}_{\alpha,r}$ denote the set of biholomorphism classes of type III Inoue surfaces  $S^{\alpha,r}_{a,b,c}$ with $N(\alpha,a,b)\in {\cal N}^-_\alpha$. Our main result for this class of surfaces is Theorem \ref{FibreOverSimilClass-}, which states that the fibre of the natural map $\mathscr{Q}_{\alpha,r}\to {\cal N}^-_\alpha$ over a similarity class $\Ng=[N(\alpha,a,b)]\in {\cal N}^-_\alpha/\sim$ can be identified with  the quotient of 
$$\Z_{N,r}\edf \Z^2/(I_2+N)\Z^2+r\Z^2$$
 by the  group  $Z^+_{\GL(2,\Z)}(N)$ acting on $\Z_{N,r}$ by $K*[p]\edf [\varepsilon_K Kp]$. The set ${\cal N}_\alpha^-/\sim$ of similarity classes associated with $\alpha$ is again finite, so, for any $S^-$-admissible $\alpha$ and $r\in\N^*$ one obtains a finite number of biholomorphism classes of type III Inoue surfaces.
 \vspace{1mm}
 
 Note that  ``positive centraliser''  $Z^+_{\GL(2,\Z)}(N)$ associated with a matrix $N\in {\cal N}_\alpha$ ($N\in {\cal N}_\alpha^-$), where $\alpha$ is $S^+$- (respectively $S^-$-)admissible, is always infinite cyclic (see section \ref{Z+-section}, where it is shown that specifying a generator of this group is an interesting number theoretical problem).

\vspace{2mm}

We will begin our article with a new presentation of Inoue's constructions; for each one of the three types of Inoue surfaces we will precise  the entire data system needed for the construction of such a surface and we  will use the notation $S_\pg$ for the complex surface associated with the data system (parameter) $\pg$ varying in a set $\Pg$ which we will describe explicitly. Noting that surfaces of different types cannot be biholomorphic (see Corollary   \ref{LineSubBdls}), it follows that the  classification problem  for Inoue surfaces reduces to giving --  for every fixed type -- an explicit description  of the quotient set $\Pg/R$, where $R$ is the equivalence relation 
$$R\edf\{(\pg',\pg'')\in\Pg\times\Pg|\ S_{\pg'}\simeq S_{\pg''}\}.$$
The main difficulty is to understand in detail the equivalence relation $R$.

Let  $\Aff(\C^2)$, $\mathrm{T}(\C^2)$ be the groups of affine transformations, respectively translations, of $\C^2$, $U\edf H\times\C$ and let $\Aff(U)$, $\mathrm{T}(U)$ be the subgroups  
$$
\Aff(U)\edf\{f\in\Aff(\C^2)|\ f(U)=U\},\ \mathrm{T}(U)\edf\{f\in\mathrm{T}(\C^2)|\ f(U)=U\}
$$
consisting of those affine transformations (respectively translations) which fix $U$.

An elementary computation shows that
\begin{re}
We have 
$$
\mathrm{T}(U)=\left\{\begin{pmatrix}
w\\z	
\end{pmatrix}\stackrel{f}{\map}\begin{pmatrix}
w\\z 	
\end{pmatrix}+ \begin{pmatrix}
u\\ \zeta 	
\end{pmatrix}\vline \  u\in\R,\ \zeta\in\C\right\}.
$$	
$$
\Aff(U)=\left\{\begin{pmatrix}
w\\z	
\end{pmatrix}\stackrel{f}{\map}\begin{pmatrix}
\mu &0\\
\lambda&\nu	
\end{pmatrix}\begin{pmatrix}
w\\z 	
\end{pmatrix}+ \begin{pmatrix}
u\\ \zeta 	
\end{pmatrix}\vline \   \mu\in \R_{>0},\ \nu\in\C^*,\ u\in\R,\ (\lambda,\zeta)\in\C^2\right\}.
$$	
\end{re}

Any Inoue surface is the quotient $S=U/\Gamma$, where $\Gamma$ is a subgroup of the group $\Aff(U)$ acting properly discontinuously on $U$. Our classification results are based on the following theorem: 
\newtheorem*{th1}{Theorem \ref{tilde f is affine}}
\begin{th1}
	Let	$S'=U/\Gamma'$, $S''=U/\Gamma''$ be Inoue surfaces, $f:S'\to S''$ be a biholomorphism and  $\tilde f:U\to U$ be a lift of $f$. Then $\tilde f \in \Aff(U)$.
\end{th1}

This  theorem is a consequence of our uniqueness theorem: the tangent bundle of any Inoue surface  admits a  unique  holomorphic connection  (see Theorem \ref{HolConn} in this article), but can also be proved using Klingler's  or Dumitrescu's  uniqueness result \cite[Lemma 4.3]{Kl}, \cite[Proposition 3.1]{Du},  which concerns  holomorphic connections with vanishing curvature and torsion, respectively holomorphic connections with vanishing torsion.   

Theorem \ref{tilde f is affine} gives a purely algebraic interpretation of the set of isomorphism classes of Inoue surfaces: 

\newtheorem*{coro}{Corollary  \ref{ClassifCoro}}
\begin{coro}
Two Inoue surfaces 	 $S'=U/\Gamma'$, $S''=U/\Gamma''$ are biholomorphic if  and only if the subgroups $\Gamma'$, $\Gamma''$ of the group $\Aff(U)$ belong to the same conjugacy class. 
\end{coro}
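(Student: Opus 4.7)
The plan is to deduce the corollary directly from Theorem \ref{tilde f is affine} together with the observation that $U = H\times\C$ is contractible, hence simply connected. By construction, each of the groups $\Gamma'$, $\Gamma''$ acts freely and properly discontinuously on $U$ by elements of $\Aff(U)$, so the quotient maps $\pi':U\to S'$ and $\pi'':U\to S''$ are universal coverings with deck transformation groups $\Gamma'$, $\Gamma''$ respectively.

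For the easy implication, suppose $\Gamma'' = g\,\Gamma'\,g^{-1}$ for some $g\in\Aff(U)$. Then $g$ is a biholomorphism of $U$ that sends $\Gamma'$-orbits bijectively to $\Gamma''$-orbits, so it descends to a well-defined map $\bar g : S'\to S''$. Since $g$ is a biholomorphism and $\pi'$, $\pi''$ are local biholomorphisms, $\bar g$ is a biholomorphism as well.

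For the non-trivial converse, let $f : S'\to S''$ be a biholomorphism. By the covering space lifting property applied to the simply connected cover $U$, there exists a biholomorphic lift $\tilde f : U\to U$ with $\pi''\circ\tilde f = f\circ\pi'$. By Theorem \ref{tilde f is affine}, $\tilde f\in\Aff(U)$. The standard equivariance property of a lift under deck transformations states that for every $\gamma'\in\Gamma'$ there is a unique $\gamma''\in\Gamma''$ with $\tilde f\circ\gamma' = \gamma''\circ\tilde f$, and that the assignment $\gamma'\mapsto\tilde f\,\gamma'\,\tilde f^{-1}$ defines a group isomorphism $\Gamma'\to\Gamma''$. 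Consequently $\tilde f\,\Gamma'\,\tilde f^{-1} = \Gamma''$ inside $\Aff(U)$, which is the required conjugacy.

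The entire difficulty of this corollary is concentrated in Theorem \ref{tilde f is affine}, which itself depends on the uniqueness of a holomorphic connection on the tangent bundle of an Inoue surface; once that theorem is available, the present statement is a purely formal consequence of the covering-theoretic description $S = U/\Gamma$. No separate verification of free, properly discontinuous action is required beyond what is already built into Inoue's original construction.
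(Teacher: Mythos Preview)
Your proof is correct and follows essentially the same approach as the paper. The paper derives the corollary directly from Theorem \ref{tilde f is affine} together with the covering-theoretic identity $\tilde f\,\Gamma'\,\tilde f^{-1}=\Gamma''$ (stated as formula (\ref{GammConj}) just before the theorem); you have simply made both implications explicit and spelled out the standard deck-transformation argument.
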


Using this result we will obtain, for each type of Inoue surfaces,  a purely algebraic interpretation of the equivalence relation $R$ on the parameter set $\Pg$, which will allow us to describe the quotient set $\Pg/R$.

\section{The construction of Inoue surfaces}

\subsection{Type I Inoue surfaces}\label{Intro-typeI}

We begin with the following definition:
\begin{dt}
A complex number $\beta$ will be called admissible if $\beta\in  \C^*\setminus \R$, $|\beta|<1$,  and $|\beta|^{-2}$, $\beta$ and $\overline \beta$ are  the roots of a polynomial $P(X)\in \Z[X]$ of the form
 $$P(X)=X^3-\theta _2X^2+\theta_1X-1 \hbox{ with } \theta_1,\theta_2\in \Z.$$
Equivalently, $\beta\in  \C\setminus \R$ with $|\beta|<1$ is admissible if  
$$\begin{cases}
|\beta|^{-2}+2\Re(\beta)\in \Z \\
 2|\beta|^{-2} \Re(\beta)+\vert\beta\vert^2\in \Z.
\end{cases}$$

\end{dt}

\begin{re}\label{P(X)-is-irred}
The polynomial $P(X)=X^3-\theta _2X^2+\theta_1X-1\in\Z[X]$ associated with an admissible complex number $\beta$ is irreducible in $\Q[X]$.	
\end{re}
\begin{proof}
It suffices to prove that $\alpha\edf |\beta|^{-2}$ is not rational.  Suppose that $\alpha\in\Q$. $P(X)$ is a monic polynomial with integer coefficients, so its root $\alpha$ is integer over $\Z$. Since $\Z$ is integrally closed in $\Q$, it follows that $\alpha\in \Z$, so $P(X)$ decomposes as $P(X)=(X-\alpha)\big(X^2+ (\alpha-\theta_2)X+\alpha(\alpha-\theta_2)+\theta_1 \big)$. This implies $\alpha(\alpha(\alpha-\theta_2)+\theta_1) =1$, so $\alpha^{-1}=|\beta|^2\in\Z$, which contradicts the assumption $|\beta|<1$.
\end{proof}

\begin{dt}
Let $\beta$ be an admissible complex number. A pair $(a,b)\in \R^3\times \C^3$ will be called $\beta$-compatible if  $a$, $b$ and $\overline b$ are linearly independent over  $\C$  and  
	$$M(\beta, a,b)\edf {\begin{pmatrix} a_1 & b_1 & \overline b_1 \\ a_2& b_2 &\overline b_2 \\ a_3 & b_3 & \overline b_3 \end{pmatrix}}{\begin{pmatrix} |\beta|^{-2} & 0 & 0\\ 0& \beta &0 \\ 0 &0 & \overline \beta \end{pmatrix}}{\begin{pmatrix} a_1 & b_1 & \overline b_1 \\ a_2& b_2 &\overline b_2 \\ a_3 & b_3 & \overline b_3 \end{pmatrix}}^{-1}\in M_3(\Z).$$
We put:
 $${\cal P}_{\beta}=\big\{(a,b)\in \R^3\times \C^3| \ (a,b)\hbox{ is }\beta\hbox{-compatible}\big\}.$$
\end{dt}
 Note that

\begin{re}

Let $(a,b)$ be a  $\beta$-compatible pair. Then
\begin{enumerate}
\item $M(\beta, a,b)\in\SL(3,\Z)$.
\item $\Spec(M(\beta,a,b))=\{|\beta|^{-2},\beta, \bar \beta\}$.
\item The eigenspaces of  $M(\beta,a,b)$ in $\C^3$ are:
	 $$E_{|\beta|^{-2}}=\C a\,, \ E_{\beta}=\C b\,, \   E_{\bar \beta}=\C \bar b.$$

\end{enumerate}

\end{re}

For a pair $(a,b)\in \R^3\times \C^3$ we define the affine transformations $g_i(a,b)\in\Aff(U)$, $1\leq i\leq 3$,  by
$$
g_i(a,b):(w,z)\to\ (w+a_i,z+b_i).
$$

\begin{dt}\label{G(alpha,beta,a,b)I)}
	Let $\beta$ be an admissible complex number and $(a,b)\in{\cal P}_{\beta}$. We define   $g_0(\beta)\in \Aff(U)$  by
	$$ 
g_0(\beta):(w,z)\to\ (|\beta|^{-2} w, \beta  z)
$$	
and we define $G(\beta,a,b)$ to be the group generated by $g_0(\beta)$ and  $(g_i(a,b))_{1\leq i\leq 3}$.
\end{dt}

Note that $G(\beta,a,b)$ acts properly discontinuously on $U$ \cite{In}. The argument uses essentially the compatibility property of the pair $(a,b)$. 

\begin{dt}
	Let $\beta$ be an admissible complex number and $(a,b)\in{\cal P}_{\beta}$. We define the (first type)   Inoue surface $S^{\beta}_{a,b}$ by
	$$S^{\beta}_{a,b}=\qmod{U}{G(\beta,a,b)}$$
	where $G(\beta,a,b)$ is the subgroup of $\Aff(U)$ generated by the affine automorphisms  	$g_0(\beta)$, $(g_i(a,b))_{1\leq i\leq 3}$. 
\end{dt}

Therefore, for type I Inoue surfaces, the space of parameters $\Pg$ considered in the introduction is
$$
\Pg=\{(\beta,a,b)\in\C\times \R^3\times\C^3|\  \beta \hbox{ is admissible, } (a,b)\hbox{ is } \beta \hbox{-compatible}\}.	
$$
Note that 
\begin{re}\label{comm-rel-I}\cite[p. 274]{In}
The generators $g_0\edf g_0(\beta)$, $g_i\coloneq g_i(a,b)$ of $G(\beta,a,b)$ satisfy the commutation relations
$$
g_ig_j=g_jg_i \ \ \  i,j=1,2,3,
$$
$$g_0g_ig_0^{-1}=g_1^{m_{i1}}g_2^{m_{i2}}g_3^{m_{i3}} \ \ \ i=1,2,3,$$
where 	$m_{ij}$ are the entries of the matrix $M\coloneq M(\beta,a,b)$.
\end{re}
Using Remark \ref{comm-rel-I} we obtain the following simple
\begin{re} \label{ginG}
	 With the notations introduced above we have:
\begin{enumerate}
\item Any element $g\in G(\beta,a,b)$ can be written 	in a unique way as 
$$g=g_3(a,b)^{k_3}g_2(a,b)^{k_2}g_1(a,b)^{k_1}g_0(\beta)^{k_0}\hbox{ with }k_i\in\Z.$$  
\item We have 
$$G(\beta,a,b)\cap \T(U)= \langle g_1(a,b),g_2(a,b),g_3(a,b)\rangle\simeq\Z^3.$$

\end{enumerate}
\end{re}
\begin{proof}
	The first statement follows using the fact that the subgroup 
	$$\langle g_1(a,b),g_2(a,b),g_3(a,b)\rangle \subset G( \beta,a,b)$$
	 is abelian and normal in $G(\beta,a,b)$. The second statement follows from the first taking into account that $|\beta|<1$. 
\end{proof}

\subsection{Type II Inoue surfaces}\label{Intro-typeII}

In this section we will need the following subgroups of the group $\Aff(U)$ of affine transformations of $U\coloneq H\times\C$:
$$
\Aff_1(U)\coloneq\bigg\{\begin{pmatrix}
w\\
z 	
\end{pmatrix}\textmap{g} \begin{pmatrix}
\mu  &0\\
\lambda	&1
\end{pmatrix}\begin{pmatrix}
w\\
z 	
\end{pmatrix}+\begin{pmatrix}
u\\ \zeta	
\end{pmatrix}\vline\ \mu\in\R_{>0},\  \ u\in\R,\ \lambda\in\C, \ \zeta\in\C
\bigg\},
$$
$$
\Aff^1_1(U)\coloneq\bigg\{\begin{pmatrix}
w\\
z 	
\end{pmatrix}\textmap{g} \begin{pmatrix}
1  &0\\
\lambda	&1
\end{pmatrix}\begin{pmatrix}
w\\
z 	
\end{pmatrix}+\begin{pmatrix}
u\\ \zeta	
\end{pmatrix}\vline \  \ u\in\R,\ \lambda\in\C, \ \zeta\in\C
\bigg\},
$$
$$
\T^0(U)\coloneq  \bigg\{\begin{pmatrix}
w\\
z 	
\end{pmatrix}\map  \begin{pmatrix}
w\\
z 	
\end{pmatrix}+\begin{pmatrix}
0\\ \zeta	
\end{pmatrix}\vline  \ \zeta\in\C
\bigg\}. 
$$
Note that
\begin{re}
\begin{enumerate}

\item  $\T^0(U)$, $\T(U)$, $\Aff_1(U)$, $\Aff^1_1(U)$ are normal subgroups of $\Aff(U)$.
\item $\T^0(U)$ is central in $\Aff_1(U)$, so also in $\Aff^1_1(U)$.
\end{enumerate}
	
\end{re}

We shall see that the subgroups which intervene  in the construction of type II Inoue surfaces  are all contained in $\Aff_1(U)$.
 
\begin{dt}\label{def-S^+-admissible}
A real number $\alpha$ will be called $S^+$-admissible if  $\alpha=\frac{\theta+\sqrt{\theta^2-4}}{2}$, where $\theta\in\N_{\geq 3}$.	
\end{dt}

Therefore $\alpha$ is $S^+$-admissible if and only if $\alpha\in]1,+\infty[$ and $\alpha+\alpha^{-1}\in\N$, and if and only it is the larger root of a quadratic equation $t^2-\theta t+1$ with $\theta\in\N_{\geq 3}$.  
\begin{dt}
Let $\alpha\in\R$ be $S^+$-admissible. A linearly independent pair $(a,b)\in\R^2\times\R^2$   will be called  $\alpha$-compatible if 
\begin{equation}\label{Zcondition}
N(\alpha,a,b)\coloneq \begin{pmatrix}
a_1 &b_1\\
a_2& b_2	
\end{pmatrix}\begin{pmatrix}
\alpha &0\\
0&\alpha^{-1}	
\end{pmatrix}\begin{pmatrix}
a_1 &b_1\\
a_2& b_2	
\end{pmatrix}^{-1}\in M_2(\Z).
\end{equation}
\end{dt}

\begin{re}

Let $\alpha$  be as above and $(a,b)$ be  an $\alpha$-compatible pair. Then
\begin{enumerate}
\item $N(\alpha, a,b)\in\SL(2,\Z)$.
\item $\Spec(N(\alpha,a,b))=\{\alpha,\alpha^{-1}\}$.
\item The eigenspaces of   $N(\alpha,a,b)$ in $\R^2$ are  $E_{\alpha}=\R a$, $ E_{\alpha^{-1}}=\R  b$.	
\end{enumerate}	
\end{re}
We put:
$${\cal P}_{\alpha}=\big\{(a,b)\in \R^2\times \R^2| \ (a,b)\hbox{ is }\alpha\hbox{-compatible}\big\},$$
$${\cal N}_\alpha\coloneq\{N\in\SL(2,\Z)|\ \alpha\in\Spec(N)\},$$
and we denote by $\eta_\alpha:{\cal P}_{\alpha}\to {\cal N}_\alpha$ the map given by $\eta_\alpha(a,b)\edf N(\alpha,a,b)$.

\begin{dt}\label{Def-alpha-r-compat} Let $(a,b)$ be an $\alpha$-compatible pair and $r\in\N^*$.
\begin{enumerate}
	\item  A vector $c\in\R^2$ will be called $(a,b,r)$-compatible if, putting $N\coloneq N(\alpha,a,b)$, we have 
\begin{equation}\label{CompCond}
(I_2-N)\bigg(c-\frac{1}{2}\bpm a_1b_1\\ a_2b_2\epm\bigg)-\frac{b\wedge a}{2}\bpm n_{11}n_{12}\\ n_{21}n_{22}\epm\in \frac{b\wedge a}{r}\Z^2.  
\end{equation}

If this is the case, we define  $p(a,b,c,r)\in\Z^2$ by 
\begin{equation}\label{c->p-formula}
(I_2-N)\bigg(c-\frac{1}{2}\bpm a_1b_1\\ a_2b_2\epm\bigg)-\frac{b\wedge a}{2}\bpm n_{11}n_{12}\\ n_{21}n_{22}\epm= \frac{b\wedge a}{r}p(a,b,c,r).	
\end{equation}
	
The set of 	$(a,b,r)$-compatible vectors will be denoted by ${\cal C}_{a,b,r}$.

\item If $(a,b)$ is $\alpha$-compatible and $c$ is $(a,b,r)$-compatible we will also say that the triple $(a,b,c)$ is $(\alpha,r)$-compatible. Therefore, $(a,b,c)$  is $(\alpha,r)$-compatible if and only if 
\begin{enumerate}
\item $N(\alpha,a,b)\coloneq \begin{pmatrix}
a_1 &b_1\\
a_2& b_2	
\end{pmatrix}\begin{pmatrix}
\alpha &0\\
0&\alpha^{-1}	
\end{pmatrix}\begin{pmatrix}
a_1 &b_1\\
a_2& b_2	
\end{pmatrix}^{-1}\in M_2(\Z)$,	
\item Putting $N=(n_{ij})_{i,j}=N(\alpha,a,b)$, we have
$$
(I_2-N)\bigg(c-\frac{1}{2}\bpm a_1b_1\\ a_2b_2\epm\bigg)-\frac{b\wedge a}{2}\bpm n_{11}n_{12}\\ n_{21}n_{22}\epm\in \frac{b\wedge a}{r}\Z^2.$$	
\end{enumerate}
\end{enumerate}
\end{dt}
\begin{re}
Our compatibility condition (\ref{CompCond})  is equivalent to Inoue's compatibility condition \cite[formula (17) p. 276]{In}, which reads
$$(I_2-N)c-e\in \frac{b\wedge a}{r}\Z^2,$$
where
$$
\begin{pmatrix}
e_1\\e_2	
\end{pmatrix}=
\frac{1}{2}\begin{pmatrix}
n_{11}(n_{11}-1) a_1b_1+ n_{12}(n_{12}-1)a_2b_2\\
n_{21}(n_{21}-1) a_1b_1+n_{22}(n_{22}-1) a_2 b_2	
\end{pmatrix}+ 
\begin{pmatrix}
 n_{11}n_{12}\\
 n_{21}n_{22}	
 \end{pmatrix}b_1 a_2.
$$
Using the identities $Na=\alpha a$, $Nb=\alpha^{-1}b$, one obtains easily:
$$
\begin{pmatrix}
e_1\\e_2	
\end{pmatrix}=\frac{1}{2}\bigg[\bpm a_1b_1\\
 a_2 b_2
\epm - N\bpm a_1b_1\\ a_2b_2\epm+ (b\wedge a)\begin{pmatrix}
 n_{11}n_{12}\\
 n_{21}n_{22}	
 \end{pmatrix}\bigg],
 $$
 which proves the claim.
 	
\end{re}

The Inoue surfaces of type II are quotients of $U$ by a  group of affine  transformations constructed using an $(\alpha,r)$-compatible triple and a complex parameter $t\in\C$:
\begin{re}\label{c->p-remark}
Let $(a,b)\in {\cal P}_\alpha$ and $r\in \N^*$. 
\begin{enumerate}
\item The set ${\cal C}_{a,b,r}$ is a $\frac{b\wedge a}{r}(I_2-N)^{-1}(\Z^2)$-torsor.
\item 	The map $\pi_{a,b,r}:{\cal C}_{a,b,r}\to \Z^2$, $\pi_{a,b,r}(c)\edf 	p(a,b,c,r)\in\Z^2$
is bijective and satisfies the identity 
$$
\pi_{a,b,r}\bigg(c+\frac{b\wedge a}{r}(I_2-N)^{-1}\bpm s_1\\s_2\epm \bigg)=\pi_{a,b,r}(c)+\bpm s_1\\s_2\epm.
$$


%
\end{enumerate}
\end{re}

Note that the two components of  	$p(a,b,c,r)$ correspond to the integers $p$, $q$ in Inoue's notation.\\

Let $(a,b,c)\in \R^2\times\R^2\times\R^2$ and $r\in\N^*$. We define the affine transformations $g_i=g_i(a,b,c)$  ($1\leq i\leq 2$), $g_3=g_3(a,b,r)\in\Aff(U)$,    by 

\begin{equation}\label{Def-gi}
g_i\begin{pmatrix}
w\\z	
\end{pmatrix}=\begin{pmatrix}
w+a_i\\z+b_iw+c_i	
\end{pmatrix}=\begin{pmatrix}
1 &0\\
b_i& 1	
\end{pmatrix}\begin{pmatrix}
w\\z	
\end{pmatrix}+\begin{pmatrix}
a_i\\c_i	
\end{pmatrix}
,\ g_3\begin{pmatrix}
w\\z	
\end{pmatrix}=\begin{pmatrix}
w\\z+\frac{b\wedge a}{r}	
\end{pmatrix}.	
\end{equation}

\begin{dt}\label{G(rabct)} 
Let $\alpha$, $r$ be as above and $(a,b,c)$ be  an $(\alpha,r)$-compatible triple. We define
$G(a,b,c,r)$ to be the subgroup of $\Aff(U)$ generated by the affine transformations $g_1=g_1(a,b,c)$, $g_2=g_2(a,b,c)$, $g_3=g_3(a,b,r)$.

For $t\in\C$ we also define $g_0=g_0(\alpha,t)\in \Aff(U)$ by
$$
g_0(\alpha,t)\begin{pmatrix}
w\\z	
\end{pmatrix}=\begin{pmatrix}
\alpha w\\z+t	
\end{pmatrix}
$$
and the subgroup $G(\alpha,a,b,c,r,t)\subset\Aff(U)$ by 
$$G(\alpha,a,b,c,r,t)\coloneq \langle g_0,g_1,g_2,g_3\rangle.$$
\end{dt}
Recall \cite[p. 276]{In} that $G(\alpha,a,b,c,r,t)$ acts properly discontinuously  on $U$. The argument uses essentially the $(\alpha,r)$-compatibility condition of the triple $(a,b,c)$. We define

\begin{dt}\label{StypeII}
Let $\alpha$, $r$ be as above and $(a,b,c)$ be  an $(\alpha,r)$-compatible triple. The type II Inoue surface associated with the parameters 	$(\alpha,a,b,c,r,t)$ is
$$
S^{\alpha,r}_{a,b,c,t}\coloneq \qmod{U}{G(\alpha,a,b,c,r,t)}.
$$
\end{dt}

Therefore, for type II Inoue surfaces, the space of parameters $\Pg$ considered in the introduction is
\begin{align*}
\Pg=\{(\alpha,a,b,c,r,t)\in\ ]1,+\infty[\times \R^2\times\R^2\times\R^2\times\N^*&\times\C|\ \alpha \hbox{ is $S^+$-admissible, and}\\ 
&(a,b,c)\hbox{ is } (\alpha,r)-\hbox{compatible}\}.	
\end{align*}

\begin{re} \label{com-rel-II} \cite[p. 276]{In} Let $(a,b,c)$ be  an $(\alpha,r)$-compatible triple.
Putting $p\coloneq p(a,b,c,r)\in\Z^2$, we have the commutation relations 
$$g_1^{-1}g_2^{-1}g_1g_2=g_3^r,\ g_ig_3=g_3g_i \hbox{ for } 0\leq i\leq 2,\ g_0g_ig_0^{-1}=g_1^{n_{i1}}g_2^{n_{i2}}g_3^{p_i},$$
where $N=(n_{ij})_{i,j}=N(\alpha,a,b)$.

\end{re}
\begin{re}\label{RelationsSubgroups}
The subgroup $Z(a,b,r)\coloneq\langle g_3	\rangle\subset G(\alpha,a,b,c,r,t)$ is central in $G(\alpha,a,b,c,r,t)$ and $G(a,b,c,r)$ is a normal subgroup of $G(\alpha,a,b,c,r,t)$.

\begin{enumerate}
\item We have 
\begin{equation}\label{inters1}
 Z(a,b,r)=G(a,b,c,r)\cap \T(U)=G(\alpha,a,b,c,r,t)\cap \T(U)	
 \end{equation}
 The proof uses the fact that $b_1$, $b_2$ are linearly independent over $\Q$ (which follows easily taking into account that $b$ is an eigenvector of a rational matrix associated with an irrational eigenvalue).
 
 \item We have
\begin{equation}\label{inters2}
 G(a,b,c,r)=	G(\alpha,a,b,c,r,t)\cap \Aff^1_1(U). 
\end{equation}

\item We have inclusions $G(\alpha,a,b,c,r,t)\subset \Aff_1(U)$, $G(a,b,c,r)\subset \Aff^1_1(U)$, and the induced morphism 
\begin{equation}\label{mor-quot}
\qmod{G(\alpha,a,b,c,r,t)}{G(a,b,c,r)}\to \qmod{\Aff_1(U)}{\Aff^1_1(U)}
\end{equation}
between the corresponding quotient groups is  a monomorphism. Identifying the quotient ${\Aff_1(U)}/{\Aff^1_1(U)}$ with $\R_{>0}$ via the obvious isomorphism 
$${\Aff_1(U)}/{\Aff^1_1(U)}\textmap{\simeq} \R_{>0}$$ induced by  
$
\left(\begin{pmatrix}
w\\
z 	
\end{pmatrix}\textmap{g} \begin{pmatrix}
\mu  &0\\
\lambda	&1
\end{pmatrix}\begin{pmatrix}
w\\
z 	
\end{pmatrix}+\begin{pmatrix}
u\\ \zeta	
\end{pmatrix}\right)\mapsto \mu,$ 
we obtain a group isomorphism  
$$
{G(r,\alpha,a,b,c,t)}/{G(a,b,c,r)}=\langle [g_0]\rangle\textmap{\simeq} \langle \alpha \rangle. 
$$
 onto the cyclic group $\langle \alpha\rangle\subset\R_{>0}$ which maps the  class $[g_0]_{G(a,b,c,r)}$ to $\alpha$.

\end{enumerate}

\end{re}
Using Remark \ref{com-rel-II}, we obtain:
\begin{re}\label{[GG(a,b,c,r),GG(a,b,c,r)]}
One has $[G(a,b,c,r),G(a,b,c,r)]=\langle g_3^r\rangle\subset \langle g_3\rangle$	, in particular one has an isomorphism
$$
{\langle g_3\rangle}/{[G(a,b,c,r),G(a,b,c,r)]}\simeq\Z_r.
$$
\end{re}
\begin{re}\label{gen-form-g-in-G(a,b,c,r-t)}
Any element $g\in G(a,b,c,r)$ can be written in a unique way in the form
\begin{equation}\label{g-in-G(a,b,c,r)}
g=g_1^{n_1}g_2^{n_2}g_3^k
\end{equation} 
with $n_1$, $n_2$, $k\in\Z$; any element $g\in G(\alpha,a,b,c,r,t)$ can be written in a unique way in the form
\begin{equation}\label{g-in-G(a,b,c,r,t)}
g=g_0^lg_1^{n_1}g_2^{n_2}g_3^k 
\end{equation}  
with $n_1$, $n_2$, $k$, $l\in\Z$.

\end{re}

\subsection{Type III Inoue surfaces}\label{Intro-typeIII}

We start with the analogue of Definition \ref{def-S^+-admissible} for type III surfaces:

\begin{dt}\label{def-S^--admissible}
A real number $\alpha\in ]1,+\infty[$ will be called $S^-$-admissible if  $\alpha=\frac{\theta+\sqrt{\theta^2+4}}{2}$, where $\theta\in\N^*$.	
\end{dt}

In other words $\alpha$ is  $S^-$-admissible if it coincides with the positive root of a quadratic equation of the form $t^2-\theta t- 1=0$ with $\theta\in\N^*$.
Let $\alpha$ be $S^-$-admissible.

\begin{dt}\label{DefCompat-}
A linearly independent pair $(a,b)\in\R^2\times\R^2$   will be called  $\alpha$-compatible if, putting 
$P_{ab}=\begin{pmatrix}
a_1 &b_1\\
a_2& b_2	
\end{pmatrix}$, we have 
\begin{equation}\label{Zcondition-}
N(\alpha,a,b)\coloneq P_{ab}\begin{pmatrix}
\alpha &0\\
0&-\alpha^{-1}	
\end{pmatrix}P_{ab}^{-1}\in M_2(\Z).
\end{equation}
\end{dt}
The space of $\alpha$-compatible pairs in the sense of Definition \ref{DefCompat-} will be denoted by ${\cal P}_\alpha^-$.  For $(a,b)\in {\cal P}_\alpha^-$ we have
$$N(\alpha,a,b)\in \GL_-(2,\Z)\edf \{N\in\GL(2,\Z)|\ \det(N)=-1\},\ \tr(N(\alpha,a,b))=\theta,$$
and the eigenspaces of $N(\alpha,a,b)$ are $E_\alpha=\R a$, $E_{-\alpha^{-1}}=\R b$. For an $S^-$-admissible $\alpha\in ]1,+\infty[$ we will use the notation:
$$
{\cal N}^-_\alpha\edf \{N\in \GL_-(2,\Z)|\ \alpha\in \Spec(N)\}.
$$

\begin{dt} Let $(a,b)$ be an $\alpha$-compatible pair and $r\in\N^*$.
An element $c\in\R^2$ is called $(a,b,r)$-compatible if, putting $N\coloneq N(\alpha,a,b)$, we have 
\begin{equation}\label{CompCond-}
(I_2+N)\bigg(c-\frac{1}{2}\bpm a_1b_1\\ a_2b_2\epm\bigg)-\frac{a\wedge b}{2}\bpm n_{11}n_{12}\\ n_{21}n_{22}\epm\in \frac{a\wedge b}{r}\Z^2.
\end{equation}
If this is the case, we define  $p(a,b,c,r)\in\Z^2$ by
$$
(I_2+N)\bigg(c-\frac{1}{2}\bpm a_1b_1\\ a_2b_2\epm\bigg)-\frac{a\wedge b}{2}\bpm n_{11}n_{12}\\ n_{21}n_{22}\epm=\frac{a\wedge b}{r}\bpm p_1\\p_2\epm, 
$$
 and we'll also say that $(a,b,c)$ is an $(\alpha,r)$-compatible triple. We will denote by ${\cal C}_{a,b,r}$ the space of $(a,b,r)$-compatible vectors and by ${\cal T}_{\alpha,r}$ the space of $(\alpha,r)$-compatible triples.
\end{dt}

Note that, as mentioned for type II surfaces, our compatitibility condition is equivalent to Inoue's  formula \cite[(20), p. 279]{In}.

\begin{dt}\label{G(rabc-)} 
Let $\alpha$, $r$ be as above and $(a,b,c)$ be  an $(\alpha,r)$-compatible triple. We define
$G(a,b,c,r)$ to be the subgroup of $\Aff(U)$ generated by the affine transformations $g_1=g_1(a,b,c)$, $g_2=g_2(a,b,c)$, $g_3=g_3(a,b,r)$ defined in (\ref{Def-gi}). 
We also define $g_0=g_0(\alpha)\in \Aff(U)$ by
$$
g_0(\alpha)\begin{pmatrix}
w\\z	
\end{pmatrix}=\begin{pmatrix}
\alpha w\\-z	
\end{pmatrix}
$$
and the subgroup $G(\alpha,a,b,c,r)\subset\Aff(U)$ by 
$$G(\alpha,a,b,c,r)\coloneq \langle g_0,g_1,g_2,g_3\rangle.$$
\end{dt}

Recall \cite[p. 279]{In} that $G(\alpha,a,b,c,r)$ acts properly discontinuously  on $U$.   We define

\begin{dt}\label{StypeIII}
Let $\alpha$, $r$ be as above and $(a,b,c)$ be  an $(\alpha,r)$-compatible triple. The type III Inoue surface associated with the parameters 	$(\alpha,a,b,c,r)$ is
$$
S^{\alpha,r}_{a,b,c}\coloneq \qmod{U}{G(\alpha,a,b,c,r)}.
$$
\end{dt}

\begin{re} \label{com-rel-III} \cite[p. 279]{In} Let $(a,b,c)$ be  an $(\alpha,r)$-compatible triple.
Putting $p\coloneq p(a,b,c,r)\in\Z^2$, we have the commutation relations 
$$g_1^{-1}g_2^{-1}g_1g_2=g_3^r,\ g_ig_3=g_3g_i \hbox{ for } 1\leq i\leq 2,\ g_0g_3g_0^{-1}=g_3^{-1},\ g_0g_ig_0^{-1}=g_1^{n_{i1}}g_2^{n_{i2}}g_3^{p_i}$$
where $N=(n_{ij})_{i,j}=N(\alpha,a,b)$.

\end{re}

The statements of Remark \ref{RelationsSubgroups}  extend to type III surfaces with one notable exception: the cyclic subgroup $\langle g_3\rangle $  is still central in the group $G(a,b,c,r)$, but not in $G(\alpha,a,b,c,r)$, because $g_3$ does not commute with $g_0$. The obvious analogues of Remarks \ref{[GG(a,b,c,r),GG(a,b,c,r)]}, \ref{gen-form-g-in-G(a,b,c,r-t)} hold for type III surfaces.

\section{Fundamental properties of Inoue surfaces}

\subsection{Line bundles on Inoue surfaces}

Let $X$ be a class VII surface, $x_0\in X$ and $\pi_1(X,x_0)$ be the fundamental group of the pair $(X,x_0)$. Since $b_1(X)=1$, the torsion free quotient $H_1(X,\Z)/\Tors(H_1(X,\Z))$ is always isomorphic to $\Z$.  If $b_2(X)=0$, the natural map 
$$\Hom(H_1(X,\Z),\C^*)=\Hom(\pi_1(X,x_0),\C^*)\to \Pic(X)$$
is an isomorphism, and the image of the  natural monomorphism
$$
 \Hom\left(\qmod{H_1(X,\Z)}{\Tors(H_1(X,\Z))},\C^*\right)\to \Hom(H_1(X,\Z),\C^*)=\Pic(X)
$$
is precisely the identity component $\Pic^0(X)$ of $\Pic(X)$ (see for instance \cite{Te-survey}). Fixing an isomorphism   $H_1(X,\Z)/\Tors(H_1(X,\Z))\textmap{\simeq}\Z$ gives an isomorphism 
$$\Hom(\Z,\C^*)=\C^*\textmap{\simeq}  \Hom\left(\qmod{H_1(X,\Z)}{\Tors(H_1(X,\Z))},\C^*\right)= \Pic^0(X).$$ 
\vspace{1mm}

Any Inoue surface $S$ is the quotient $U/\Gamma$ where $\Gamma$ is a group of affine transformations acting properly discontinuously on  $U\edf H\times\C\subset\C^2$. We choose $u_0\in U$, we put $x_0=[u_0]\in S$ and   we  identify $\pi_1(S,x_0)$ with $\Gamma$ in the obvious way. The canonical isomorphism $\Hom(\pi_1(S,x_0),\C^*)=\Hom(\Gamma,\C^*)\to \Pic(S)$ is given explicitly by
$$
\Hom(\Gamma,\C^*)\ni \rho\mapsto [L_\rho]\in\Pic(S),
$$
where  
$$
L_\rho\edf U\times\C/_\rho\,\Gamma.
$$
Here $U\times\C/_\rho\,\Gamma$ stands for the quotient of the trivial line bundle $U\times\C$ over $U$ by the group $\Gamma$ acting by
$$
(g,((w,z),\zeta))\mapsto (g(w,z), \rho(g)\zeta).
$$

For an Inoue surface $S$ we also have a natural  choice of  an isomorphism 
$$\qmod{H_1(S,\Z)}{\Tors(H_1(S,\Z))}\to\Z$$
 obtained using Inoue's description of the group $H_1(S,\Z)$ (see \cite[p. 274, 276, 279]{In}):

Identifying $H_1(S,\Z)$ with the abelianization  ${\Gamma}/{[\Gamma,\Gamma]}$ of $\Gamma$, we see that, for all three types of Inoue  surfaces, the classes $[g_i]$, $1\leq i\leq 3$, generate  $\Tors(H_1(S,\Z))$, whereas the map 
$$
H_1(S,\Z)=\qmod{\Gamma}{[\Gamma,\Gamma]}\ni [g_0^{k_0}][g_1^{k_1}][g_2^{k_2}][g_3^{k_3}] \longmapsto k_0\in\Z 
$$
is an epimorphism with kernel $\Tors(H_1(S,\Z))$, so it induces   induces an isomorphism 
$$\kappa_0:\qmod{H_1(S,\Z)}{\Tors(H_1(S,\Z))}\to\Z.$$
For an Inoue surface $S$ we will always use the isomorphism $\kappa_0$ defined above to identify the torsion free quotient $H_1(S,\Z)/\Tors(H_1(S,\Z))$ with $\Z$. The corresponding isomorphism 
$$\C^*=\Hom(\Z,\C^*)\textmap{\simeq}  \Hom\left(\qmod{H_1(S,\Z)}{\Tors(H_1(S,\Z))},\C^*\right)=\Pic^0(S) $$
is given explicitly by 
$$
\zeta\mapsto [L_{\rho_\zeta}],
$$
where $\rho_\zeta:\Gamma\to \C^*$ is defined by 
$$\rho_\zeta(g_i)=1\hbox{ for }1\leq i\leq 3,\ \rho_\zeta(g_0)=\zeta.$$
We will put $L_\zeta\edf L_{\rho_\zeta}$ to save on notations. \\

One of the main results of \cite{In} is the following fundamental:

\begin{thry} \label{ThInoue} (Inoue) 
Let  $S$ be an Inoue surface. Then $S$ has no complex curve.
\end{thry}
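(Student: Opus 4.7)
My plan is to derive a contradiction from the existence of a nonempty compact curve $C\subset S$ by pairing its current of integration with a canonical $\Gamma$-invariant closed semi-positive $(1,1)$-form coming from the hyperbolic metric on $H$. To start, since $b_2(S)=0$, the isomorphism $\Pic(S)\cong\Hom(\Gamma,\C^*)$ just established produces a character $\rho:\Gamma\to\C^*$ with ${\cal O}_S(C)\cong L_\rho$, and I would lift the canonical section of ${\cal O}_S(C)$ vanishing on $C$ to a $\rho$-equivariant holomorphic function $f:U\to\C$ with zero locus $\tilde C=\pi^{-1}(C)$. The commutation relations for the generators, together with the fact that $1$ is not an eigenvalue of the matrix $M$ (resp.\ $N$), force $g_1,g_2,g_3$ to be torsion in $\Gamma^{\mathrm{ab}}$, so $|\rho(g_i)|=1$ for $i\geq 1$, whereas $\zeta\edf\rho(g_0)\in\C^*$ is arbitrary.

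Next I would build the canonical $(1,1)$-form on $S$. In all three Inoue families the action of $g_0$ on the first coordinate is $w\mapsto\alpha w$ and that of $g_i$ ($i\geq 1$) is $w\mapsto w+a_i$ with $a_i\in\R$, hence $y\edf\Im(w)$ satisfies $y\circ g_0=\alpha y$ and $y\circ g_i=y$. It follows that the pullback of the Poincaré metric on $H$,
\[
\nu\edf\frac{i}{2}\,\frac{dw\wedge d\bar w}{y^{2}},
\]
is $\Gamma$-invariant and descends to a smooth, closed, semi-positive $(1,1)$-form on $S$; since $\nu$ has rank one, $\nu\wedge\nu\equiv 0$. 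Setting $c\edf\log|\zeta|/\log\alpha$, the function $F\edf\log|f|^{2}-2c\log y$ is $\Gamma$-invariant on $U$ and descends to an $L^{1}$ function $F_{S}$ on $S$. Combining Poincaré-Lelong ($i\partial\bar\partial\log|f|^{2}=2\pi[\tilde C]$) with the short computation $i\partial\bar\partial\log y=-\tfrac12\nu$ yields the current identity
\[
i\partial\bar\partial F_{S}\;=\;2\pi\,[C]+c\,\nu\quad\text{on }S.
\]
Since $F_{S}$ is a function, the left-hand side is $d$-exact, and Stokes' theorem applied to the closed form $\nu$ gives $\int_{S}i\partial\bar\partial F_{S}\wedge\nu=0$. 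Using $\nu\wedge\nu=0$, this reduces to $\int_{C}\nu=0$.

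To finish I would argue as follows. Since $\nu$ is semi-positive and vanishes only along the vertical foliation, $\int_{C}\nu=0$ forces $dw\equiv 0$ on $\tilde C$, so $\tilde C$ is a union of vertical leaves $\{w_{j}\}\times\C$, with the set of $w_{j}$'s necessarily closed in $H$. But the first-coordinate action of $\Gamma$ is generated by $w\mapsto\alpha w$ and $w\mapsto w+a_{i}$, and the $\Q$-linear independence of the coordinates of the eigenvector $a$ in $\R$ --- a consequence of the irreducibility of the characteristic polynomial of $M$ (resp.\ of $N$) together with $\alpha\not\in\Q$ --- makes every such orbit dense in a countable union of horizontal lines. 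Hence no such orbit is closed in $H$, contradicting the existence of $\tilde C$.

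\emph{Expected obstacle.} The crux is the construction of $\nu$: producing a $\Gamma$-invariant closed semi-positive rank-one $(1,1)$-form on $S$. The uniform scaling law $y\circ g_{0}=\alpha y$, $y\circ g_{i}=y$ across the three families --- reflecting the common affine-diagonal structure of Inoue's $g_{0}$ --- is the decisive non-trivial input that makes the descent work. Once $\nu$ is in hand, everything else reduces to Poincaré-Lelong, Stokes, and an elementary density statement about $\Gamma$-orbits on $H$; in particular no case distinction between the three Inoue families is needed.
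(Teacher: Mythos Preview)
The paper does not give its own proof of this statement; it records it as ``the main result of \cite{In}'' and uses it as a black box (to obtain Corollary~\ref{CoInoue}). So there is no proof in the paper to compare against. That said, your argument is correct and has the merit of treating the three Inoue families uniformly.

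A few points deserve to be made explicit. First, your torsion claim for $[g_i]$ ($i\ge 1$) in $\Gamma^{\rm ab}$ is exactly what the paper quotes from \cite{In} just before Remark~\ref{extclass}: for types~II and~III one needs the commutator relation $[g_1,g_2]=g_3^r$ in addition to the conjugation relations with $g_0$, so that $r[g_3]=0$ first, and then $(I-N)$ kills $([g_1],[g_2])$ modulo torsion. Second, after $\int_C\nu=0$ you should say that each irreducible component $D$ of $\tilde C$ is a complex-analytic curve in $U$ on which $dw$ vanishes identically, hence $D=\{w_D\}\times\C$; it follows that $W=\{w_D\}$ is \emph{locally finite} (not merely closed) in $H$, which is what makes the density argument bite. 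Third, for types~II and~III there are only two translations $a_1,a_2$, and the relevant fact is $a_1/a_2\notin\Q$ (a rational eigenvector of $N$ would force $\alpha\in\Q$); this already gives density of $\langle a_1,a_2\rangle$ in $\R$, which suffices. For type~I your stronger claim that $a_1,a_2,a_3$ are $\Q$-linearly independent is also true: any nontrivial rational relation would span a proper $M^T$-invariant rational subspace of $\Q^3$, contradicting irreducibility of $\chi_M$.

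With these small clarifications, the Poincar\'e--Lelong/Stokes computation and the orbit-density contradiction go through exactly as you outline.
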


In other words an Inoue surface 	has no positive effective divisor. This implies:

\begin{co}\label{CoInoue}
Let $S$ be an Inoue surface and ${\cal L}$ be a non-trivial invertible sheaf on $S$. Then $H^0(S,{\cal L})=0$.	
\end{co}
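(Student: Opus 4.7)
The plan is to derive the corollary directly from Theorem \ref{ThInoue} by the standard section-to-divisor argument. Suppose, for contradiction, that there exists a non-trivial invertible sheaf $\mathcal{L}$ on $S$ with $H^0(S,\mathcal{L})\neq 0$, and pick a non-zero global section $s\in H^0(S,\mathcal{L})$.

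First, I would consider the zero locus $D\edf (s)$ of $s$, which is an effective divisor on $S$. There are two cases according to whether $D=0$ or $D\neq 0$. In the first case, $s$ is a nowhere vanishing holomorphic section of $\mathcal{L}$, so multiplication by $s$ defines an isomorphism $\mathcal{O}_S\textmap{\simeq}\mathcal{L}$, contradicting the non-triviality of $\mathcal{L}$. So we must have $D\neq 0$.

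In the second case, $D$ is a non-zero effective divisor on the surface $S$, hence its support is a non-empty complex curve in $S$ (a finite union of irreducible compact curves, each appearing with positive multiplicity). This directly contradicts Theorem \ref{ThInoue}, which asserts that an Inoue surface contains no complex curve. Hence no such $s$ can exist, and $H^0(S,\mathcal{L})=0$.

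The proof is essentially a one-line consequence once Theorem \ref{ThInoue} is granted, so there is no real obstacle; the only subtlety worth noting explicitly is ruling out the case $D=0$, which is precisely where the non-triviality hypothesis on $\mathcal{L}$ is used.
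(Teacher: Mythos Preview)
Your proof is correct and matches the paper's own argument: both derive the corollary from Theorem \ref{ThInoue} by noting that a non-zero section of ${\cal L}$ must be nowhere vanishing (else its zero locus would be a curve), hence trivialises ${\cal L}$. The only difference is presentational---the paper phrases it as a direct contrapositive in one line, while you split into the two cases $D=0$ and $D\neq 0$ explicitly.
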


\subsection{The tangent bundle of an Inoue surface} \label{tangent-bundle-sect}

 Denoting by $(e_w,e_z)$ the canonical basis of $\C^2$, note  that in all cases the line subbundle $U\times\C e_z\subset T_U=U\times\C^2$  is invariant under the tangent map $g_*$ for any $g\in\Gamma$. Therefore we obtain a line subbundle $M$ of $T_S$ defined as the quotient of $U\times\C e_z$ by the induced $\Gamma$-action on the tangent bundle $T_U$. Putting $L\edf T_S/M$, we obtain a short exact sequence
\begin{equation}\label{CES}
0\to M\hookrightarrow T_S\to L\to 0,  
\end{equation}
which will be called {\it the canonical   exact sequence} for the tangent bundle of $S$. 
\vspace{2mm}

Note  that: 
\begin{re}\label{CESI} For Inoue surfaces of type I, the line subbundle $U\times \C e_w\subset T_U$ is also $\Gamma$-invariant and its $\Gamma$-quotient is a complement of $M$ in $T_S$, so it can be identified with $L$. Therefore, in this case, the canonical   exact sequence (\ref{CES}) splits giving a direct sum decomposition  $T_S=M\oplus L$.	
\end{re}

Identifying the line subbundles $U\times\C e_z\subset T_U$, $U\times\C e_w\subset T_U$ with the trivial line bundle $U\times\C$, and taking into account the explicit formulae for the generators $g_i$ of $\Gamma$ in each case, we see that $M$, $L$ can be identified respectively with the quotients 
$$
(U\times\C) /_{\rho_M} \Gamma\,,\,\ (U\times\C) /_{\rho_L} \Gamma
$$
where the group morphisms $\rho_M$, $\rho_L\in\Hom(\Gamma,\C^*)$ are given by the table below:
 \begin{equation}\label{rho}
\begin{array}{|c|c|c|c|c|c|c|c|c|c|c|}
\hline 	
\rm type &\rho&\rho(g_0)&\rho(g_1)&\rho(g_2)&\rho(g_3) \\ \hline 
\multirow{2}{*}{I} &\rho_M&\beta&1&1&1  \\ \cline{2-6}
  &\rho_L&\alpha&1&1&1  \\\hline
 \multirow{2}{*}{II} &\rho_M&1&1&1&1  \\ \cline{2-6}
  &\rho_L&\alpha&1&1&1  \\\hline
  \multirow{2}{*}{III} &\rho_M&-1&1&1&1  \\ \cline{2-6}
  &\rho_L&\alpha&1&1&1  \\\hline 
\end{array}	\ .
 \end{equation}

Using the notation introduced above, this  shows that 
\begin{re}\label{extclass}
For any Inoue surface $S$ we have a natural identification $L=L_\alpha$. For an Inoue surface of type $I$ (respectively II, III), we have 	a natural identification $M=L_\beta$ (respectively $M=L_1=S\times\C$, $M=L_{-1}$).
\end{re}

Note also that (\ref{CES}) gives natural identifications 
\begin{equation}\label{K*}
K_S^*=\det(T_S)=M\otimes L	=\left\{\begin{array}{cc}
L_{\alpha\beta}  &\hbox{ if $S$ is of type I}\\	
L_{\alpha}  &\hbox{ if $S$ is of type II}\\
L_{-\alpha}  &\hbox{ if $S$ is of type III}\\
\end{array}\right..
\end{equation}
\begin{re}\label{CESII-III}
For an Inoue surface of type II, the canonical exact sequence (\ref{CES}) becomes
\begin{equation}\label{CESII}
0\to {\cal O}_S\to {\cal T}_S\to {\cal K}_S^*\to 0,	
\end{equation}
and for an Inoue surface of type III, the canonical exact sequence (\ref{CES}) becomes
\begin{equation}\label{CESIII}
0\to {\cal L}_{-1}\to {\cal T}_S\to {\cal K}_S^*\otimes {\cal L}_{-1}\to 0.	
\end{equation}
In both cases the extension class $h\in \Ext^1({\cal L},{\cal M})= H^1(S,{\cal K}_S)$ is non-trivial. 
\end{re}
\begin{proof}
Suppose first that $S$ is of type II. In this case, taking duals in the short exact 	sequence (\ref{CESII}) we obtain
\begin{equation}\label{CESIID}
0\to {\cal K}_S\to \Omega_S\to {\cal O}_S\to 0,	
\end{equation}
and the extension classes of (\ref{CESII}), (\ref{CESIID}) coincide. But (\ref{CESIID}) does not split because, $S$ being a class VII surface, we have $h^{10}_S\edf \dim(H^0(S,\Omega_S))=0$.\\

Suppose now that $S$ is of type III.  As noted in \cite[p. 279]{In}, $S$ admits a double cover $\tilde S$ which is an Inoue surface of type II. Moreover, the canonical short exact sequence of $\tilde S$ is the pull-back of the canonical short exact sequence of $S$. Since the latter is not split, the canonical short exact sequence of $S$ cannot be split.
\end{proof}
\begin{co} \label{LineSubBdls}
1. A Inoue surface 
\begin{enumerate}
\item is of type I if and only of its tangent bundle has two (different) line subbundles.  
\item is of type II if and only it has 	its tangent bundle has a unique line subbundle and this line subbundle is trivial.
\item is of type III if and only it has 	its tangent bundle has a unique line subbundle and this line subbundle is non-trivial.
\end{enumerate}
2. If two Inoue surfaces are biholomorphic, then they have the same type.
\end{co}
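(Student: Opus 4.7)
The plan is to establish (1) by enumerating all line subbundles of $T_S$ in each of the three cases, and then deduce (2) at once, since a biholomorphism $S\simeq S'$ induces a bundle isomorphism $T_S\simeq T_{S'}$ that preserves the cardinality of the set of line subbundles of the tangent bundle and, in the unique cases, the isomorphism class of the sole such subbundle. The key preliminary observation I would record first is a direct consequence of Theorem \ref{ThInoue} and Corollary \ref{CoInoue}: any nonzero morphism $N_1\to N_2$ between line bundles on an Inoue surface is a bundle isomorphism, because it corresponds to a nonzero section of $N_1^{-1}\otimes N_2$, whose zero locus --- an effective divisor on the curveless surface $S$ --- must be empty.

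For type I, I would invoke the direct-sum splitting $T_S=L_\beta\oplus L_\alpha$ of Remark \ref{CESI}. Any line subbundle $N\hookrightarrow T_S$ has two components $\phi:N\to L_\beta$ and $\psi:N\to L_\alpha$, not both zero; by the preliminary observation, any nonzero component is an isomorphism. If both were nonzero we would obtain $L_\beta\simeq L_\alpha$ in $\Pic^0(S)$, i.e.\ $\alpha=\beta$ under the identification $\Pic^0(S)\simeq\C^*$ set up before Theorem \ref{ThInoue}, contradicting $\alpha\in\R$ and $\beta\notin\R$. Hence exactly one of $\phi$, $\psi$ is nonzero, and $N$ equals either $L_\beta$ or $L_\alpha$ as a subbundle; these two subbundles are moreover distinct.

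For types II and III, I would argue with the non-split short exact sequences of Remark \ref{CESII-III}, namely $0\to\mathcal{O}_S\to T_S\to \mathcal{K}_S^*\to 0$ and $0\to \mathcal{L}_{-1}\to T_S\to \mathcal{K}_S^*\otimes \mathcal{L}_{-1}\to 0$. For any line subbundle $N\hookrightarrow T_S$, the composition with the quotient map is either zero --- in which case $N$ factors through the canonical subline bundle ($\mathcal{O}_S$ or $\mathcal{L}_{-1}$) and coincides with it by the preliminary observation --- or nonzero, hence an isomorphism, which would yield a splitting of the sequence and contradict non-splitness. Thus $T_S$ has a unique line subbundle: the trivial $\mathcal{O}_S$ in type II and the nontrivial $\mathcal{L}_{-1}$ in type III. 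The nontriviality of $\mathcal{L}_{-1}$ follows from $\rho_{-1}(g_0)=-1\ne 1$ combined with the injectivity of $\Hom(\Gamma,\C^*)\to\Pic(S)$ recalled at the start of the subsection on line bundles.

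The only step requiring genuine attention will be the preliminary observation --- the passage from ``$\sigma$ is a nonzero section of a line bundle on $S$'' to ``$\sigma$ is nowhere vanishing'' through the no-curve property --- together with the careful use of the isomorphism $\Pic^0(S)\simeq\C^*$ to conclude that $L_\beta$, $L_\alpha$, $L_1=\mathcal{O}_S$ and $\mathcal{L}_{-1}$ are pairwise non-isomorphic in the instances needed. Everything else reduces to direct bookkeeping from Remarks \ref{CESI} and \ref{CESII-III} and the table (\ref{rho}).
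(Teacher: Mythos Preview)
Your proof is correct and follows the same route as the paper --- both rely on Remarks \ref{CESI} and \ref{CESII-III} --- but the paper's proof is extremely terse (it literally just cites those two remarks), whereas you have written out the argument that the paper leaves implicit: namely, the enumeration of \emph{all} line subbundles via the observation that a nonzero morphism of line bundles on a curveless surface is an isomorphism, and hence any line subbundle either coincides with the canonical one or would force a splitting (types II, III) or an impossible isomorphism $L_\alpha\simeq L_\beta$ (type I). Your version is the honest completion of the paper's one-line proof.
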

\begin{proof}
The first statements follow by Remarks \ref{CESI},  \ref{CESII-III}. The second statement follows from the first.
\end{proof}
\subsection{Vanishing theorems}

For a vector bundle $E$ (a locally free sheaf ${\cal E}$) we will denote by $\End_0(E)$ (respectively ${\cal E}nd_0({\cal E}$)) the bundle (sheaf) of trace-free endomorphisms of $E$ (respectively ${\cal E}$).
The goal of this section is the following vanishing theorem:
\begin{thry}\label{VanTh}
Let $S$ be an Inoue surface. Then
\begin{enumerate}
\item $
H^0(S, {\cal K}_S\otimes{\cal E}nd({\cal T}_S))=0$.
\item $H^0(S, \Omega_S\otimes {\cal E}nd({\cal T}_S))=0$.
\end{enumerate}
	
\end{thry}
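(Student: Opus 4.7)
The approach is to filter the target sheaves using the canonical exact sequence (\ref{CES}), $0\to M\to \mathcal{T}_S\to L\to 0$, and its dual $0\to L^{-1}\to \Omega_S\to M^{-1}\to 0$, reducing the $H^0$-computations to those of flat line bundles $L_\zeta$, where Corollary \ref{CoInoue} yields vanishing whenever $\zeta\neq 1$. Concretely, the successive quotients of $\mathcal{E}nd(\mathcal{T}_S)$ induced by (\ref{CES}) are $ML^{-1},\mathcal{O}_S,\mathcal{O}_S,LM^{-1}$; tensoring with $\mathcal{K}_S=M^{-1}L^{-1}$ produces $L^{-2},\mathcal{K}_S,\mathcal{K}_S,M^{-2}$, while tensoring with $\Omega_S$ yields eight graded pieces. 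Using the table (\ref{rho}) and Remark \ref{extclass}, each piece is identified explicitly as some $L_\zeta$.

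In \textbf{Type I}, the sequence (\ref{CES}) splits by Remark \ref{CESI}, so every sheaf in question decomposes as a direct sum of such $L_\zeta$'s. Using $\alpha>1$, $\alpha|\beta|^2=1$ and $\beta\in\C\setminus\R$ one verifies in each summand that $\zeta\neq 1$ (for instance $\beta/\alpha^2=1$ would force $\beta\in\R$, and $\alpha/\beta^2=1$ would force $\beta^2\in\R_{>0}$, both contradictions), so both (1) and (2) follow at once from Corollary \ref{CoInoue}.

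In \textbf{Types II and III}, some graded pieces become $\mathcal{O}_S$ (Type II: $M=\mathcal{O}_S$; Type III: $M^{\otimes 2}=\mathcal{O}_S$), and a finer argument is needed. For part (1) I would use the short exact sequence
$$0\to \mathcal{K}_S\otimes M\otimes\Omega_S \to \mathcal{K}_S\otimes \mathcal{E}nd(\mathcal{T}_S)\to M^{-1}\otimes \Omega_S\to 0.$$
The subsheaf has graded pieces $L^{-2}$ and $\mathcal{K}_S$, both genuinely non-trivial flat line bundles in Types II and III, so its $H^0$ vanishes by Corollary \ref{CoInoue}. The quotient is $\Omega_S$ in Type II, with $H^0(\Omega_S)=h^{1,0}_S=0$ because $S$ is class VII; in Type III it is $L_{-1}\otimes\Omega_S$, fitting in $0\to L_{-\alpha^{-1}}\to L_{-1}\otimes\Omega_S\to\mathcal{O}_S\to 0$, a twist of the dual canonical sequence, which does not split by Remark \ref{CESII-III}, so the connecting homomorphism $H^0(\mathcal{O}_S)\to H^1(L_{-\alpha^{-1}})$ is injective and $H^0=0$ follows.

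For part (2) in Types II and III, I would split off the trace, $\mathcal{E}nd(\mathcal{T}_S)=\mathcal{O}_S\cdot\id\oplus \mathcal{E}nd_0(\mathcal{T}_S)$, and use $H^0(\Omega_S)=0$ to reduce to $H^0(\Omega_S\otimes \mathcal{E}nd_0(\mathcal{T}_S))=0$; filtering $\mathcal{E}nd_0(\mathcal{T}_S)$ (successive quotients $ML^{-1},\mathcal{O}_S,LM^{-1}$) against $\Omega_S$ produces six line-bundle graded pieces, with the non-trivial ones vanishing by Corollary \ref{CoInoue} and the surviving trivial ones being killed by the same connecting-map argument; the class to be tracked is the image of $h\in H^1(\mathcal{K}_S)$ under cup product with $\id$ in the appropriate $H^1$, non-zero because $h\neq 0$ by Remark \ref{CESII-III} and tensoring by a flat line bundle is an isomorphism on cohomology. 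The \textbf{main obstacle} is Type II in part (2): because $M=\mathcal{O}_S$, multiple graded pieces of $\Omega_S\otimes\mathcal{E}nd(\mathcal{T}_S)$ are simultaneously $\mathcal{O}_S$, and the bookkeeping of Yoneda compositions needed to show that the non-zero class $h$ obstructs every potential lift of the constant section is the technical heart of the proof.
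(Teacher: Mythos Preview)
Your plan is correct and follows the same overall strategy as the paper---filter via (\ref{CES}), invoke Corollary \ref{CoInoue} on the non-trivial line-bundle graded pieces, and handle the remaining $\mathcal{O}_S$-pieces by non-splitness arguments using $h\neq 0$---but the tactics differ in two places. For part (1) in Types II and III your use of the identification $\mathcal{E}nd(\mathcal{T}_S)=\Omega_S\otimes\mathcal{T}_S$, writing $\mathcal{K}_S\otimes\mathcal{E}nd(\mathcal{T}_S)$ as an extension of $M^{-1}\otimes\Omega_S$ by $\mathcal{K}_S\otimes M\otimes\Omega_S$, is actually cleaner than the paper's route through the auxiliary subsheaf $\mathcal{F}_0$ of Lemma \ref{CommDiag} and the diagram (\ref{nice-diag}). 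For part (2) in Type II the paper avoids the obstacle you flag: instead of filtering $\Omega_S\otimes\mathcal{E}nd_0(\mathcal{T}_S)$ directly, it tensors (\ref{CESIID}) by $\mathcal{E}nd(\mathcal{T}_S)$ and uses part (1) to reduce the question to showing $H^0(\mathcal{E}nd_0(\mathcal{T}_S))=0$, a statement with no $\Omega_S$ and hence only \emph{one} trivial graded piece, dispatched by identifying the column $0\to\mathcal{K}_S\to\mathcal{F}_0\to\mathcal{O}_S\to 0$ of Lemma \ref{CommDiag} with (\ref{CESIID}) itself---this sidesteps the Yoneda bookkeeping you anticipate. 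For Type III the paper also offers, as an alternative, pullback to the type-II double cover $\tilde S$. One caution on your Type III argument for part (1): ``tensoring by a flat line bundle is an isomorphism on cohomology'' is false as stated; what you need (and what holds) is that it induces an isomorphism on $\mathrm{Ext}^1$, hence preserves non-splitness.
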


\begin{proof}
If $S$ is of type I,  we have
 $$
 {\cal T}_S\simeq {\cal M}\oplus {\cal L},\ {\cal K}_S\otimes{\cal E}nd({\cal T}_S)\simeq  {\cal M}^{*\otimes 2} \oplus {\cal L}^{*\otimes 2} \oplus {\cal K}_S^{\oplus2}, $$
 $$ \Omega_S\otimes {\cal E}nd({\cal T}_S)=({\cal L}^{*\otimes 2}\otimes{\cal M})\oplus ({\cal M}^{*\otimes 2}\otimes{\cal L)\oplus {\cal M}^{*\oplus 3} \oplus {\cal L}^{*\oplus 3}}. $$
We apply Corollary \ref{CoInoue} to the invertible sheaves ${\cal M}^*$, ${\cal L}^*$ 	${\cal M}^{*\otimes 2}$, ${\cal L}^{*\otimes 2}$, ${\cal L}^{*\otimes 2}\otimes{\cal M}$, ${\cal M}^{*\otimes 2}\otimes{\cal L}$ and ${\cal K}_S$. These sheaves are  non-trivial: this follows using (\ref{rho}) taking into account that $1\not\in \{\alpha^{-1},\beta^{-1}, \alpha^{-2},  \beta^{-2},  \alpha^{-2}\beta,  \beta^{-2}\alpha, \alpha^{-1}\beta^{-1}\}$.\\

Suppose that $S$ is of type II.
\vspace{2mm}\\
(1) Since 
$${\cal K}_S\otimes {\cal E}nd({\cal T}_S)={\cal K}_S\otimes({\cal O}_S\id_{{\cal T}_S}\oplus  {\cal E}nd_0({\cal T}_S))\simeq {\cal K}_S\oplus ({\cal K}_S\otimes  {\cal E}nd_0({\cal T}_S)),$$
and ${\cal K}_S$ is non-trivial, it suffices to prove that $H^0(S,{\cal K}_S\otimes  {\cal E}nd_0({\cal T}_S))=0$. We will make use of Lemma \ref{CommDiag} proved below applied to the exact sequence (\ref{CESII}). Tensorizing by ${\cal K}_S$ the last two lines of the first diagram, we obtain the commutative diagram
\begin{equation}\label{nice-diag}
\begin{tikzcd}[column sep=2em, row sep= 2em]
0\ar[r]&{\cal K}_S\otimes{\cal F}_0\ar[r, hook, "i_{{\cal F}_0}"]\ar[d, "{(u,v)_0}"]&{\cal K}_S\otimes{\cal E}nd_0({\cal T}_S)\ar[r, "(qj)_0"]\ar[d, "\circ j"]&	 {\cal O}_S\ar[d, "\id"] \ar[r]&0\\
0\ar[r]&{\cal K}_S \ar[r, "\circ j" ]&  {\cal K}_S\otimes {\cal T}_S\ar [r, "q\circ "]  & {\cal O}_S\ar[r] & 0 \end{tikzcd}
\end{equation}  
with exact rows. We obtain the following commutative diagram with exact rows
\begin{equation}\label{nice-diag-coh}
\begin{tikzcd}[column sep=1em, row sep= 2em]
0\ar[r]&H^0(S,{\cal K}_S\otimes{\cal F}_0)\ar[r, hook,  ]\ar[d,  ]&H^0(S,{\cal K}_S\otimes{\cal E}nd_0({\cal T}_S))\ar[r,  ]\ar[d,  ]&	 \C\ar[d, "\id"] \ar[r, "\partial'"]&H^1(S,{\cal K}_S\otimes{\cal F}_0)\ar[d]\\
0\ar[r]&H^0(S,{\cal K}_S) \ar[r]&  H^0(S,{\cal K}_S\otimes {\cal T}_S)\ar [r]  & \C\ar[r,  "\partial''"] & H^1(S,{\cal K}_S) \end{tikzcd},
\end{equation} 
where $\partial'$, $\partial''$ are the connecting morphisms associated with the two exact sequences in (\ref{nice-diag}). Since the lower exact sequence in (\ref{nice-diag}) is non-split (it is obtained by tensorizing    (\ref{CESII}) with ${\cal K}_S$), it follows that $\partial''$ is injective, so $\partial '$ is also injective.   Therefore the monomorphism 
$$
H^0(S,{\cal K}_S\otimes{\cal F}_0)\to H^0(S,{\cal K}_S\otimes{\cal E}nd_0({\cal T}_S))
$$ 
is an isomorphism, so it suffices to prove that
\begin{equation}\label{H0KF0}
H^0(S,{\cal K}_S\otimes{\cal F}_0)=0.	
\end{equation}

Tensorizing by ${\cal K}_S$ the left hand column in the first diagram of Lemma \ref{CommDiag}, we obtain the short exact sequence
$$
0\to {\cal K}_S^{\otimes 2}\to {\cal K}_S\otimes {\cal F}_0\to {\cal K}_S\to 0.
$$
Since $H^0(S,{\cal K}_S^{\otimes 2})=H^0(S,{\cal K}_S)=0$, the associated cohomology exact sequence gives $H^0(S,{\cal K}_S\otimes{\cal F}_0)=0$, as claimed. 
\\ \\
(2) Tensorizing the exact sequence (\ref{CESIID}) by ${\cal E}nd({\cal T}_S)$, we obtain the exact sequence
\begin{equation}\label{SESOmegaEnd}
0\to {\cal K}_S\otimes{\cal E}nd({\cal T}_S)\to \Omega_S\otimes {\cal E}nd({\cal T}_S)\to {\cal E}nd({\cal T}_S)\to 0, 	
\end{equation}
which gives the long exact cohomology sequence
\begin{equation}
\begin{split} 
0\to H^0(S, {\cal K}_S\otimes{\cal E}nd({\cal T}_S))&\to    H^0(S,\Omega_S\otimes {\cal E}nd({\cal T}_S))\to H^0(S,{\cal E}nd({\cal T}_S))\textmap{\Delta}  \\
&\textmap{\Delta}  H^1(S, {\cal K}_S\otimes{\cal E}nd({\cal T}_S))\to \dots, 
\end{split}
\end{equation}
where $\Delta$ is the connecting morphism associated with (\ref{SESOmegaEnd}).

Since $ H^0(S, {\cal K}_S\otimes{\cal E}nd({\cal T}_S))=0$ by (1), the vanishing of $H^0(S,\Omega_S\otimes {\cal E}nd({\cal T}_S))$ follows from the following claim which will be proved below:
\vspace{2mm}\\
{\bf Claim:} $\ker(\Delta: H^0(S,{\cal E}nd({\cal T}_S))\to H^1(S, {\cal K}_S\otimes{\cal E}nd({\cal T}_S)))=0$.\vspace{2mm}

For this Claim we  use the following commutative diagram with  exact rows
\begin{equation}\label{2ExactSeqnew}
\begin{tikzcd}[column sep=8mm, row sep=9mm]
0 \ar[r]&{\cal K}_S\otimes{\cal E}nd({\cal T}_S)\ar[r]\ar[d, "\id_{{\cal K}_S}\otimes\tr" ]& \Omega_S\otimes {\cal E}nd({\cal T}_S)\ar[r]\ar[d, "\id_{\Omega_S}\otimes\tr"]&{\cal E}nd({\cal T}_S)\ar[r]\ar[d, "\tr"]& 0\\
0\ar[r]& {\cal K}_S \ar[r]& \Omega_S \ar[r]& {\cal O}_S\ar[r]&0
\end{tikzcd},
\end{equation}
where the vertical morphisms on the left are give by:

$$
(\id_{\Omega_S}\otimes\tr)(\omega\otimes f)=\omega\otimes \tr(f)=\tr(f)\omega.$$
By the functoriality of the connecting morphism, we obtain the commutative diagram

$$
\begin{tikzcd}[column sep=8mm, row sep=9mm]
0 \ar[r]&H^0(S,{\cal E}nd({\cal T}_S))\ar[r, "\Delta"]\ar[d, "H^0(\tr)" ]& H^1(S,{\cal K}_S\otimes{\cal E}nd({\cal T}_S))\ar[d, "H^1(\id_{{\cal K}_S}\otimes\tr)"]\\
0\ar[r]& H^0(S,{\cal O}_S)=\C   \ar[r, "\partial" ]& H^1(S,{\cal K}_S)
\end{tikzcd}\ ,
$$
where $\partial$ is the connecting morphism associated with the lower short exact sequence in (\ref{2ExactSeqnew}). We know that this exact sequence is non-split, so $\partial(1)\ne 0$, which shows that $\partial$ is injective.

Therefore the claim   will be proved if we show that $H^0(\tr)$ is injective.  Using the cohomology long exact sequence associated with the short exact sequence
$$
0\to {\cal E}nd_0({\cal T}_S)\hookrightarrow {\cal E}nd ({\cal T}_S) \textmap{\tr} {\cal O}_S\to 0
$$
we see that
$$
\ker\big(H^0(\tr): H^0(S,{\cal E}nd ({\cal T}_S))\to H^0(S,{\cal O}_S)\big)=H^0(S,{\cal E}nd_0({\cal T}_S)).
$$
Therefore our claim will be proved if we show that $H^0(S,{\cal E}nd_0({\cal T}_S))=0$. Using the central exact row in the first diagram in Lemma \ref{CommDiag} and the vanishing of $H^0(S,{\cal K}_S^*)$, it suffices to prove that $H^0(S,{\cal F}_0)=0$. This follows as in the proof of (1) using the vanishing of $H^0(S,{\cal K}_S)$, the commutative diagram  
\begin{equation}\label{newcommdiag}
\begin{tikzcd}[column sep=2em, row sep= 2em]
0\ar[d]&0\ar[d]\\
 H^0(S,{\cal K}_S)\ar[r, "\id"]\ar[d]& H^0(S,{\cal K}_S)\ar[d]\\
H^0(S,\Omega_S)\ar[r]\ar[d, ]&H^0(S,{\cal F}_0)\ar[d]\\
\C=H^0(S,{\cal O}_S)\ar[r, "\simeq"]\ar[d,"\partial_1" ]&H^0(S,{\cal O}_S)=\C\ar[d, "\partial_2"]\\
 H^1(S,{\cal K}_S)\ar[r, "\id"]& H^1(S,{\cal K}_S)&
\end{tikzcd}
\end{equation} 
associated with the second diagram in Lemma \ref{CommDiag} and the injectivity of $\partial_1$ (which follows by Remark \ref{CESII-III}).
\\

Finally suppose that $S$ is of type III. In this case we can complete the proof in two ways: First method: we use again the canonical exact sequence and the associated commutative diagrams given by Lemma \ref{CommDiag}. Second method: we use the double cover $\sigma:\tilde S\to S$ (with $\tilde S$ of type II) as in the proof of Remark \ref{extclass} and we note that 
$$
\sigma^*({\cal K}_S\otimes{\cal E}nd({\cal T}_S))={\cal K}_{\tilde S}\otimes{\cal E}nd({\cal T}_{\tilde S}),
$$
$$
\sigma^*(\Omega_S\otimes {\cal E}nd({\cal T}_S))=\Omega_{\tilde S}\otimes {\cal E}nd({\cal T}_{\tilde S}).
$$
Since $\sigma$ is surjective, the natural pull-back morphisms
$$
H^0(S,{\cal K}_S\otimes{\cal E}nd({\cal T}_S))\to H^0(\tilde S,\sigma^*({\cal K}_S\otimes{\cal E}nd({\cal T}_S)))=H^0(\tilde S, {\cal K}_{\tilde S}\otimes{\cal E}nd({\cal T}_{\tilde S}))
$$
$$
H^0(S,\Omega_S\otimes{\cal E}nd({\cal T}_S))\to H^0(\tilde S,\sigma^*(\Omega_S\otimes{\cal E}nd({\cal T}_S)))=H^0(\tilde S, \Omega_{\tilde S}\otimes{\cal E}nd({\cal T}_{\tilde S}))
$$
are injective, so the vanishing of $H^0(S,{\cal K}_S\otimes{\cal E}nd({\cal T}_S))$, $H^0(S,\Omega_S\otimes{\cal E}nd({\cal T}_S))$ follow from the vanishing of $H^0(\tilde S, {\cal K}_{\tilde S}\otimes{\cal E}nd({\cal T}_{\tilde S}))$, $H^0(\tilde S, \Omega_{\tilde S}\otimes{\cal E}nd({\cal T}_{\tilde S}))$.

\end{proof}

\begin{lm}\label{CommDiag}
Let $X$ be a complex manifold and let 
\begin{equation}\label{jqES}
0\to {\cal M}\textmap{j} {\cal E}\textmap{q}	{\cal Q}\to 0
\end{equation}
be an exact sequence of locally free sheaves, where ${\cal M}$, ${\cal Q}$ are  of rank 1. Let
$$qj:{\cal E}nd({\cal E})\to {\cal H}om({\cal M},{\cal Q})={\cal M}^*\otimes{\cal Q},\ (qj)_0:{\cal E}nd_0({\cal E})\to {\cal H}om({\cal M},{\cal Q})={\cal M}^*\otimes{\cal Q}$$
be the morphisms defined $\varphi\mapsto q\circ \varphi\circ j$. Put 
$${\cal F}\edf \ker(qj),\ {\cal F}_0={\cal F}\cap{\cal E}nd_0({\cal E})=\ker(qj)_0.$$
\begin{enumerate}
\item 	The sheaf morphism $jq:{\cal H}om({\cal Q},{\cal M})\to {\cal E}nd({\cal E})$ defined by $\varphi\mapsto j\circ\varphi\circ q$ takes values in ${\cal F}_0$, so it defines a morphism $(jq)_0: {\cal H}om({\cal Q},{\cal M})\to {\cal F}_0$.
\item ${\cal F}$ comes with natural morphisms $u:{\cal F}\to {\cal E}nd({\cal M})$, $v:{\cal F}\to {\cal E}nd({\cal Q})$ defined as follows: for any $x\in X$ and $\varphi\in {\cal F}_x$, we have $\varphi({\cal M}_x)\subset {\cal M}_x$, so $\varphi$ induces morphisms $u(\varphi)\in {\cal E}nd({\cal M}_x)$, $v(\varphi)\in {\cal E}nd({\cal E}_x/{\cal M}_x)={\cal E}nd({\cal Q}_x)$.  

\item The restriction $(u,v)_0$ of the obtained morphism 
$$(u,v):{\cal F}\to {\cal E}nd({\cal M})\oplus {\cal E}nd({\cal Q})={\cal O}_X\id_{\cal M}\oplus {\cal O}_X\id_{\cal Q}$$
to ${\cal F}_0$ takes values in the trivial invertible sheaf ${\cal O}_X(\id_{\cal M},-\id_{\cal Q})$.

\item The composition of the morphism $(j\circ):{\cal H}om({\cal E},{\cal M})\to {\cal E}nd({\cal E}) $ with the trace-free morphism ${\cal E}nd({\cal E})\to {\cal E}nd_0({\cal E})$ takes values in ${\cal F}_0$, so it induces a morphism 
$$
(j\circ)_0:{\cal H}om({\cal E},{\cal M})\to {\cal F}_0.
$$
\item We have   commutative diagrams with exact lines and  columns
\begin{equation}\label{big-diag-0}
\begin{tikzcd}[column sep=1.3em, row sep= 3em]
&0\ar[d]&0\ar[d]\\
0\ar[r]&{\cal H}om({\cal Q},{\cal M})\ar[r,"\id"]\ar[d, "(jq)_0"]&{\cal H}om({\cal Q},{\cal M})\ar[d, "i_{{\cal F}_0}\circ (jq)_0"]\ar[r]&0\ar[d]\\
0\ar[r]&{\cal F}_0\ar[r, hook, "i_{{\cal F}_0}"]\ar[d, "{(u,v)_0}"]&{\cal E}nd_0({\cal E})\ar[r, "(qj)_0"]\ar[d, "\circ j"]&	 {\cal H}om({\cal M},{\cal Q})\ar[d, "\id"] \ar[r]&0\\
0\ar[r]&{\cal O}_X(\id_{\cal M},-\id_{\cal Q})\simeq{\cal O}_X\id_{\cal M}\ar[d]\ar[r, "\circ j" ]& {\cal H}om({\cal M},{\cal E})\ar [r, "q\circ "]\ar[d] & {\cal H}om({\cal M},{\cal Q})\ar[r]\ar[d]& 0\\
&0&0&0
\end{tikzcd}
\end{equation}
\begin{equation}
\begin{tikzcd}[column sep=3em, row sep= 3em]
0\ar[d]&0\ar[d]\\
{\cal H}om({\cal Q},{\cal M})\ar[r, "\id"]\ar[d, "\circ q " ]&{\cal H}om({\cal Q},{\cal M})\ar[d, "(jq)_0"]\\
{\cal H}om({\cal E},{\cal M})\ar[r, "(j\circ)_0"]\ar[d, "\frac{1}{2}\circ j "]&{\cal F}_0\ar[d, "{(u,v)_0}"]\\
{\cal H}om({\cal M},{\cal M})={\cal O}_X\id_{\cal M}\ar[r, "\simeq"]\ar[d]&{\cal O}_X(\id_{\cal M},-\id_{\cal Q})\ar[d]\\
0&0&
\end{tikzcd}
\end{equation} 
\end{enumerate}	
\end{lm}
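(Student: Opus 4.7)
The plan is to verify all assertions after choosing a local splitting of (\ref{jqES}), which reduces everything to $2\times 2$ block-matrix calculations. Let $U\subset X$ be an open set over which the sequence splits; on $U$ every endomorphism of $\mathcal{E}$ can be written as a block matrix
\[
\varphi=\begin{pmatrix} a&b\\c&d\end{pmatrix},\qquad a,d\in \mathcal{O}_X(U),\ b\in\mathcal{H}om(\mathcal{Q},\mathcal{M})(U),\ c\in\mathcal{H}om(\mathcal{M},\mathcal{Q})(U),
\]
where $a$, $d$ are identified with $a\,\id_\mathcal{M}$, $d\,\id_\mathcal{Q}$. A direct computation gives $qj(\varphi)=c$, so $\mathcal{F}|_U$ is the subsheaf where $c=0$ and $\mathcal{F}_0|_U$ the subsheaf where additionally $a+d=0$.

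In these local coordinates the assertions (1)--(4) become elementary. For (1), the map $\varphi\mapsto j\circ\varphi\circ q$ sends $\varphi\in\mathcal{H}om(\mathcal{Q},\mathcal{M})$ to the matrix with $\varphi$ in the $(1,2)$-slot and zeros elsewhere, which manifestly lies in $\mathcal{F}_0$. For (2), $\varphi\in\mathcal{F}$ stabilises $j(\mathcal{M})$ stalkwise, so induces endomorphisms $u(\varphi)$ of $\mathcal{M}$ and $v(\varphi)$ of $\mathcal{Q}$ corresponding in matrices to $a$ and $d$. For (3), the trace-free condition forces $a+d=0$, hence $(u(\varphi),v(\varphi))=(a\,\id_\mathcal{M},-a\,\id_\mathcal{Q})\in\mathcal{O}_X(\id_\mathcal{M},-\id_\mathcal{Q})$. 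For (4), the naive map $j\circ\psi$ attached to $\psi=(a,b)\colon\mathcal{E}\to\mathcal{M}$ has matrix $\begin{pmatrix}a&b\\0&0\end{pmatrix}$ with trace $a=\psi\circ j$; subtracting $\tfrac{a}{2}\id_\mathcal{E}$ yields $\begin{pmatrix}a/2&b\\0&-a/2\end{pmatrix}\in\mathcal{F}_0$, which transparently accounts for the factor $\tfrac12$ on the vertical arrow of the second diagram.

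For (5), commutativity of each square and exactness of each row and column can then be read off the local picture: the central row $0\to\mathcal{F}_0\to\mathcal{E}nd_0(\mathcal{E})\to\mathcal{H}om(\mathcal{M},\mathcal{Q})\to 0$ is exact because $(qj)_0$ is precisely the surjective extraction of the $c$-entry; the left column $0\to\mathcal{H}om(\mathcal{Q},\mathcal{M})\to\mathcal{F}_0\to\mathcal{O}_X\,\id_\mathcal{M}\to 0$ isolates the $(1,2)$-entry and the trace-free diagonal of an element of $\mathcal{F}_0$; and the bottom row $0\to\mathcal{O}_X\,\id_\mathcal{M}\to\mathcal{H}om(\mathcal{M},\mathcal{E})\to\mathcal{H}om(\mathcal{M},\mathcal{Q})\to 0$ is $\mathcal{H}om(\mathcal{M},\cdot)$ applied to (\ref{jqES}). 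The middle column and the entire second diagram are handled identically. Since each morphism constructed is intrinsic (built from $j$, $q$, $\tr$ and composition), the local identifications glue to global statements on $X$.

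I do not anticipate any deep obstacle; the only genuine difficulty is bookkeeping. The main care needed is to verify that the intrinsic definitions of $(jq)_0$, $(u,v)_0$, $\circ j$ and $(j\circ)_0$ agree with their matrix expressions and are in particular independent of the chosen splitting, and to keep the numerous arrows, signs and normalisations (notably the factor $\tfrac12$, which comes solely from $\rk(\mathcal{E})=2$) straight while writing out the two diagrams.
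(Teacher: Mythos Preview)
Your proposal is correct and follows essentially the same approach as the paper: the authors simply remark that the statement reduces to the analogous statement for a short exact sequence of vector spaces, which ``follows easily using a basis $(u,v)$ of $E$ with $u\in M$''. Your local splitting and $2\times 2$ block-matrix computation is exactly this reduction carried out in more detail, together with the (routine) observation that the intrinsically defined morphisms glue.
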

\begin{proof}
	The statement  follows from the analogue statement for an exact sequence of holomorphic bundles, which is a consequence of the analogue statement for an an exact sequence $0\to M\textmap{j} E\textmap{q}	Q\to 0$ of vector spaces. The latter follows easily using a basis $(u,v)$ of $E$ with $u\in M$.
\end{proof}

Using the second statement in Theorem \ref{VanTh}, we obtain

\begin{thry}\label{HolConn}
Let $S=U/\Gamma$ be an Inoue surface. The tangent bundle $T_S$ admits a unique holomorphic connection. The pull-back of this connection to $U$ coincides with the trivial connection on $T_U=U\times\C^2$, in particular this connection is flat and torsion free. 	
\end{thry}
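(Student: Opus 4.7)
The plan hinges on Theorem \ref{VanTh}(2) for uniqueness and on a direct construction for existence; the substantive work was the vanishing theorem, so what remains here is essentially packaging.

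For existence, I consider the trivial holomorphic connection $\nabla^{\mathrm{triv}} = d$ on $T_U = U \times \C^2$, given by componentwise exterior differentiation in the global flat coordinates $(w,z)$. For any $g \in \Gamma \subset \Aff(U)$, the tangent map $g_*$ acts in this trivialization as the constant $\GL(2,\C)$-matrix $A_g$ equal to the linear part of the affine map $g$. Since $g(p) = A_g p + b_g$, the Jacobian $dg = A_g$ is constant, and a short direct computation shows that $d \circ g_* = g_* \circ d$ as operators $\Gamma(T_U) \to \Gamma(\Omega_U \otimes T_U)$. Hence $\nabla^{\mathrm{triv}}$ is $\Gamma$-equivariant and descends to a holomorphic connection $\nabla$ on the quotient bundle $T_S = T_U/\Gamma$, whose pull-back to $U$ is by construction the trivial connection. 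Flatness is immediate from $(\nabla^{\mathrm{triv}})^2 = d^2 = 0$, and the torsion vanishes since $\nabla^{\mathrm{triv}}_{\partial_w}\partial_z = \nabla^{\mathrm{triv}}_{\partial_z}\partial_w = 0$ while $[\partial_w,\partial_z] = 0$.

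For uniqueness, I invoke the standard fact: if $\nabla_1,\nabla_2$ are two holomorphic connections on a holomorphic vector bundle $E$, then $\nabla_1 - \nabla_2$ is $\mathcal{O}_S$-linear (the Leibniz terms cancel), so it defines a global section of $\mathcal{H}om(E,\Omega_S \otimes E) = \Omega_S \otimes \mathcal{E}nd(E)$. Applying this with $E = T_S$ and invoking Theorem \ref{VanTh}(2), which asserts $H^0(S, \Omega_S \otimes \mathcal{E}nd(\mathcal{T}_S)) = 0$, I conclude that $T_S$ carries at most one holomorphic connection. By the preceding paragraph that connection is the $\nabla$ coming from the trivial connection on $U \times \C^2$, so its pull-back to $U$ is the trivial connection and the final claims about flatness and torsion-freeness follow.

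I do not expect a real obstacle: the only nontrivial ingredient is the vanishing of $H^0(S, \Omega_S \otimes \mathcal{E}nd(\mathcal{T}_S))$, which has already been handled in Theorem \ref{VanTh}; everything else is a formal consequence of the fact that the group $\Gamma$ acts by affine transformations, so that the linearization of its action is locally constant and therefore preserves the trivial connection.
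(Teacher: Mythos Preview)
Your proof is correct and follows essentially the same approach as the paper: existence via descent of the trivial connection on $U\times\C^2$ (using that $\Gamma$ acts by affine maps, hence with constant Jacobian), and uniqueness via the affine-space structure on holomorphic connections combined with the vanishing $H^0(S,\Omega_S\otimes\mathcal{E}nd(\mathcal{T}_S))=0$ from Theorem~\ref{VanTh}. The paper's argument is slightly terser but identical in substance.
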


\begin{proof}
For the existence: the trivial connection $\nabla_0$ on $T_U=U\times\C^2$ is holomorphic, flat, torsion free and $\Gamma$-invariant, so it descends to a flat, torsion free holomorphic connection   on the quotient $S=U/\Gamma$.

For the unicity: if non-empty,  the space of holomorphic connections on $T_S$ is an affine space with model vector space $H^0(S,\Omega_S\otimes {\cal E}nd({\cal T}_S))$. But this vector space vanishes by Theorem \ref{VanTh}.
\end{proof}

\section{The classification of Inoue surfaces}

Recall that any Inoue surface is the quotient $S=U/\Gamma$, where $\Gamma$ is a subgroup of the group $\Aff(U)$ acting properly discontinuously on $U$ (see section \ref{intro}).\\

Let $S'=U/\Gamma'$, $S''=U/\Gamma''$ be Inoue surfaces and $p':U\to S'$, $p'':U\to S''$ be the corresponding covering projections.  Let $f:S'\to S''$ be a biholomorphism. Since $p''$ and $f\circ p'$ are both universal covers of $S''$, it follows that there exists a biholomorphism $\tilde f:U\to U$ lifting $f$, i.e. such that $p''\circ \tilde f=f\circ p'$.  This gives the commutative diagram
\begin{equation}\label{USS'}
\begin{tikzcd}[row sep=3em]
U\ar[r,"\tilde f"]\ar[d, "p'"]& U\ar[d, "p''"]\\
S'=U/\Gamma'\ar[r, "f"]&	U/\Gamma''=S''
\end{tikzcd}	
\end{equation}
and shows that, for such a lift $\tilde f$, we have:
\begin{equation}\label{GammConj}
\tilde f\circ \Gamma'\circ  \tilde 	f^{-1} =\Gamma''.\end{equation}

An important role in our following results will be played by the following theorem, which reduces the classification of Inoue surfaces to an algebraic problem:
\begin{thry}\label{tilde f is affine}
Let	$S'=U/\Gamma'$, $S''=U/\Gamma''$ be Inoue surfaces, $f:S'\to S''$ be a biholomorphism and  $\tilde f:U\to U$ be a lift of $f$. Then $\tilde f \in \Aff(U)$.
\end{thry}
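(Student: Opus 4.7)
The plan is to leverage Theorem~\ref{HolConn}, which asserts that each Inoue surface admits a unique holomorphic connection on its tangent bundle, and that its pull-back to $U$ is the trivial connection $\nabla_0$ on $T_U=U\times\C^2$.

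Let $\nabla'$, $\nabla''$ denote the unique holomorphic connections on $T_{S'}$, $T_{S''}$. The biholomorphism $f$ transports $\nabla''$ to a holomorphic connection $f^*\nabla''$ on $T_{S'}$ via the bundle isomorphism $df:T_{S'}\to f^*T_{S''}$. By the uniqueness clause of Theorem~\ref{HolConn}, this forces
\[
f^*\nabla''=\nabla'.
\]

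Next I would transport this equality to the universal cover. Using the commutative diagram (\ref{USS'}) together with functoriality of connection pull-back and the equalities $(p')^*\nabla'=(p'')^*\nabla''=\nabla_0$ provided by Theorem~\ref{HolConn}, we compute
\[
\tilde f^*\nabla_0 \;=\; \tilde f^*(p'')^*\nabla'' \;=\; (p''\circ\tilde f)^*\nabla'' \;=\; (f\circ p')^*\nabla'' \;=\; (p')^*f^*\nabla'' \;=\; (p')^*\nabla' \;=\; \nabla_0.
\]
Hence $\tilde f:U\to U$ is a biholomorphism of $U$ preserving the trivial holomorphic connection $\nabla_0$ on $T_U$.

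The final step is the classical fact that a biholomorphism $\tilde f$ of a connected open subset of $\C^2$ preserving the trivial connection is the restriction of an affine transformation of $\C^2$. Writing $\tilde f=(F,G)$ in the standard coordinates on $U$ and applying the transformation rule for Christoffel symbols, the identity $\tilde f^*\nabla_0=\nabla_0$ becomes
\[
\frac{\partial^2 F}{\partial x^i\partial x^j}=\frac{\partial^2 G}{\partial x^i\partial x^j}=0 \quad \text{for all } i,j,
\]
so $F$ and $G$ are affine functions of $(w,z)$. Since $\tilde f(U)=U$, the affine extension to $\C^2$ lies in $\Aff(U)$, completing the proof. The entire conceptual weight of the argument is carried by Theorem~\ref{HolConn}, which was already established via the vanishing results of the preceding section; the main obstacle was therefore overcome before reaching this statement, and the present argument is essentially a naturality/functoriality check together with the standard characterization of affine maps as connection-preserving maps.
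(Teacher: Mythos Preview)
Your proof is correct and follows essentially the same route as the paper: use uniqueness of the holomorphic connection (Theorem~\ref{HolConn}) to get $f^*\nabla''=\nabla'$, pull back via the commutative diagram to obtain $\tilde f^*\nabla_0=\nabla_0$, and conclude that $\tilde f$ is affine. The only cosmetic difference is that the paper phrases the last step as ``$\tilde f_*$ preserves the space of $\nabla_0$-parallel vector fields, hence the Jacobian is constant'' rather than invoking the vanishing of the Christoffel symbols, but these are equivalent formulations of the same elementary fact.
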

\begin{proof} Let $\nabla'$, $\nabla''$ be the (unique) holomorphic connections on the tangent bundles $T_{S'}$,  $T_{S''}$ respectively.  By Theorem \ref{HolConn} we know that $p'^*(\nabla')=p''^*(\nabla'')=\nabla_0$.

The pull-back $f^*(\nabla'')$ is a holomorphic connection on $S'$, so, by our unicity result, it coincides with $\nabla'$.  Therefore 
\begin{equation}
\begin{split}
\nabla_0&=p'^*(\nabla')=p'^*(f^*(\nabla''))=(f\circ p')^*(\nabla'')=\\
&=(p''\circ\tilde f)^*(\nabla'')=\tilde f^*(p''^*(\nabla''))=\tilde f^*(\nabla_0).	
\end{split}	
\end{equation}

Therefore $\tilde f$ is $\nabla_0$-affine. This implies that the tangent map $\tilde f_*$ preserves the space $\left\langle \frac{\partial}{\partial w},\frac{\partial}{\partial z}\right\rangle_\C$ of $\nabla_0$-parallel holomorphic  vector fields  on $U$. Therefore  the Jacobian matrix of $\tilde f$ is constant on $U$, so, since $U$ is connected, $\tilde f$ is the restriction to $U$ of an affine transformation (which fixes $U$).
	
\end{proof}

Taking into account Theorem \ref{tilde f is affine} and formula (\ref{GammConj}), we obtain the following result, which reduces the classification of Inoue surfaces to a purely algebraic problem:

\begin{co}\label{ClassifCoro}
Two Inoue surfaces 	 $S'=U/\Gamma'$, $S''=U/\Gamma''$ are biholomorphic if  and only if the subgroups $\Gamma'$, $\Gamma''$ of the group $\Aff(U)$ belong to the same conjugacy class. 
\end{co}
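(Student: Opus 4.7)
The corollary appears to be an essentially formal consequence of Theorem \ref{tilde f is affine} together with the equivariance relation \eqref{GammConj}, so my plan is to prove the two implications separately and keep the argument brief.

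For the forward direction, I would start with a biholomorphism $f: S'\to S''$ and, as explained in the paragraph preceding Theorem \ref{tilde f is affine}, pick any lift $\tilde f : U \to U$ making the diagram \eqref{USS'} commute. The covering-space argument already noted in the text gives \eqref{GammConj}, i.e.\ $\tilde f \Gamma' \tilde f^{-1} = \Gamma''$. By Theorem \ref{tilde f is affine} the lift $\tilde f$ actually lies in $\Aff(U)$, so this identity is a conjugation inside $\Aff(U)$, which is exactly the conclusion.

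For the reverse direction, I would take $h \in \Aff(U)$ such that $h\Gamma'h^{-1}=\Gamma''$. Because $h$ is affine and preserves $U$, it is in particular a biholomorphism of $U$, and the conjugation relation means that $h$ intertwines the $\Gamma'$-action with the $\Gamma''$-action in the sense that $h \circ \gamma = (h\gamma h^{-1}) \circ h$ for every $\gamma \in \Gamma'$. Consequently $h$ sends $\Gamma'$-orbits to $\Gamma''$-orbits and descends to a well-defined map $\bar h : S' \to S''$. Running the same argument with $h^{-1} \in \Aff(U)$ (which conjugates $\Gamma''$ back to $\Gamma'$) yields an inverse $\overline{h^{-1}}: S'' \to S'$, and both descended maps are holomorphic because $p'$, $p''$ are local biholomorphisms and $h$, $h^{-1}$ are holomorphic; hence $\bar h$ is a biholomorphism.

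I do not expect a genuine obstacle here: all the substance of the statement is concentrated in Theorem \ref{tilde f is affine}, which itself rests on the uniqueness of the holomorphic connection (Theorem \ref{HolConn}) and the vanishing Theorem \ref{VanTh}. Once one accepts that every lift is automatically affine, both implications reduce to standard covering-space bookkeeping: the only thing to check carefully is that the descended map is holomorphic, and this is immediate from the local biholomorphism property of the covering projections.
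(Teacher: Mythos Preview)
Your argument is correct and follows exactly the approach indicated in the paper, which simply notes that the corollary is an immediate consequence of Theorem~\ref{tilde f is affine} together with formula~\eqref{GammConj}. Your write-up merely makes explicit the (standard) converse implication that a conjugating element of $\Aff(U)$ descends to a biholomorphism, which the paper leaves to the reader.
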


Our next goal: based on this corollary, we will obtain explicit descriptions of isomorphism classes of Inoue surfaces  of a fixed type.

\subsection{The classification of type I Inoue surfaces}

The goal of this section is Theorem \ref{Class-Type-I-similitude} stated at the end of the section, which reduces the classification of type I Inoue surfaces to a purely algebraic problem.

Recall from section \ref{Intro-typeI} that the space of parameters for type I Inoue surfaces is
$$
\Pg=\{(\beta,a,b)\in \C\times \R^3\times\C^3|\  \beta \hbox{ is  admissible, } 
 (a,b)\hbox{ is }  \beta \hbox{-compatible}\}.	
$$
Taking into account Corollary \ref{ClassifCoro}, our problem  becomes:
\begin{pb} Let $\beta$, $\beta'$ be   admissible complex numbers. Let $(a,b)\in {\cal P}_{\beta}$ be a  $\beta$-compatible pair,  and $(a',b')\in P_{\beta'}$ a   $\beta'$-compatible pair (see section \ref{Intro-typeI}). 
Describe explicitly the quotient of $\Pg$ by the equivalence relation
$$
(\beta,a,b)\sim (\beta',a',b') \hbox{ if $G(\beta,a,b)$, $G(\beta',a',b')$ are conj. in $\Aff(U)$}.
$$
\end{pb}
\begin{re}
	Let $\beta$ be an admissible complex number. The formula
$$
(K,(\mu,\nu))\cdot (a,b)\edf K\cdot((\mu,\nu)\cdot(a,b))\edf(K\mu a,K\nu b)
$$	
defines an action of the product
$
\GL(3,\Z)\times (\R^*_+\times \C^*)  
$
on the space ${\cal P}_{\beta}$. \end{re}

\begin{pr}\label{equiv-sim-orbits}
Let  $(\beta,a,b)$, $(\beta',a',b')\in \Pg$. The following conditions are equivalent:
\begin{enumerate}
	\item $(\beta,a,b)\sim (\beta',a',b')$.
	\item $ \beta= \beta'$ and $(a',b')\in \GL(3,\Z)\times (\R^*_{+}\times \C^*)(a,b)$.
\end{enumerate} 
Therefore the quotient set $	\qmod{\Pg}{\sim}$ is identified with the orbit space 
$$\qmod{\Pg}{\GL(3,\Z)\times (\R^*_{+}\times \C^*)}=\coprod_{\substack{\beta\in\C   \\ \rm{admissible}}}
\qmod{{\cal P}_{\beta}}
{\GL(3,\Z)\times (\R^*_{+}\times \C^*)}.
$$

\end{pr}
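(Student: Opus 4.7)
The plan is to invoke Corollary \ref{ClassifCoro}, which reduces the question to whether the subgroups $G(\alpha,\beta,a,b)$ and $G(\alpha',\beta',a',b')$ are conjugate in $\Aff(U)$. The implication $(2)\Rightarrow (1)$ is constructive. Given $(\mu,\nu)\in\R^*_+\times\C^*$, the dilation $\varphi_{\mu,\nu}(w,z) = (\mu w,\nu z)$ lies in $\Aff(U)$ since $\mu > 0$, and a direct computation gives $\varphi_{\mu,\nu} g_i(a,b) \varphi_{\mu,\nu}^{-1} = g_i(\mu a,\nu b)$ for $i=1,2,3$ together with $\varphi_{\mu,\nu} g_0(\alpha,\beta)\varphi_{\mu,\nu}^{-1} = g_0(\alpha,\beta)$; hence $\varphi_{\mu,\nu}$ conjugates $G(\alpha,\beta,a,b)$ to $G(\alpha,\beta,\mu a,\nu b)$. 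The subsequent action of $K\in\GL(3,\Z)$ does not even change the group: each $g_i(K\mu a,K\nu b)$ is the translation by $\sum_j K_{ij}(\mu a_j,\nu b_j)$, so the translation subgroups generated by $g_1,g_2,g_3$ coincide, while $g_0(\alpha,\beta)$ does not depend on $(a,b)$. Composing the two conjugations yields the desired one.

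For the converse $(1)\Rightarrow(2)$, suppose $\varphi\in\Aff(U)$ satisfies $\varphi G(\alpha,\beta,a,b)\varphi^{-1} = G(\alpha',\beta',a',b')$ and write $\varphi(w,z) = (\mu w + u,\,\lambda w + \nu z + \zeta)$ in the canonical form. Since $\T(U)$ is normal in $\Aff(U)$, conjugation by $\varphi$ carries $G(\alpha,\beta,a,b)\cap\T(U)$ bijectively onto $G(\alpha',\beta',a',b')\cap\T(U)$. By Remark \ref{ginG} these are the rank-$3$ lattices $\Lambda(a,b),\Lambda(a',b')\subset\R\times\C\cong\R^3$ generated by $(a_i,b_i)$ and $(a'_i,b'_i)$; the $(\alpha,\beta)$-compatibility of $(a,b)$ (linear independence of $a,b,\bar b$ over $\C$) ensures that they are of full rank. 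A direct computation shows that conjugation by $\varphi$ acts on translations through the $\R$-linear map $\Phi(u_0,\zeta_0) = (\mu u_0,\,\lambda u_0+\nu\zeta_0)$, so $\Phi(\Lambda(a,b)) = \Lambda(a',b')$.

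Next I analyse $\varphi g_0(\alpha,\beta)\varphi^{-1}$. By Remark \ref{ginG} it has a unique normal form $g_3'^{k_3}g_2'^{k_2}g_1'^{k_1}g_0(\alpha',\beta')^{k_0}$ in $G(\alpha',\beta',a',b')$, and since the induced isomorphism between the infinite cyclic quotients $G/(G\cap\T(U))\to G'/(G'\cap\T(U))$ must send generator to generator, $k_0 = \pm 1$. Comparing linear parts yields
\[
\bpm \mu & 0 \\ \lambda & \nu \epm \bpm \alpha & 0 \\ 0 & \beta \epm \bpm \mu & 0 \\ \lambda & \nu \epm^{-1} = \bpm \alpha & 0 \\ \lambda(\alpha-\beta)/\mu & \beta \epm = \bpm (\alpha')^{k_0} & 0 \\ 0 & (\beta')^{k_0} \epm.
\]
Since $\alpha\in\R$ and $\beta\notin\R$ we have $\alpha\neq\beta$, which forces $\lambda = 0$; the constraint $\alpha,\alpha'>1$ excludes $k_0 = -1$, so $k_0 = 1$ and $(\alpha,\beta) = (\alpha',\beta')$. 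With $\lambda=0$ the map $\Phi$ becomes diagonal, and the equality $\Phi(\Lambda(a,b)) = \Lambda(a',b')$ reads $\Lambda(\mu a,\nu b) = \Lambda(a',b')$; expressing $(a'_i,b'_i)$ as $\Z$-combinations of $(\mu a_j,\nu b_j)$ produces a matrix $K\in\GL(3,\Z)$ with $(a',b') = (K\mu a,K\nu b)$, as claimed.

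The main technical obstacle is the linear-part computation just performed: it simultaneously forces $\lambda = 0$ and $(\alpha,\beta) = (\alpha',\beta')$, and depends crucially both on the distinctness $\alpha\neq\beta$ (guaranteed by $\beta\notin\R$) and on the sign normalization $\alpha > 1$. The final identification of $\Pg/\!\sim$ with the disjoint union $\coprod_{(\alpha,\beta)}{\cal P}_{\alpha,\beta}/(\GL(3,\Z)\times(\R^*_+\times\C^*))$ over admissible pairs then follows immediately.
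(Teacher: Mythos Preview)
Your proof is correct and follows essentially the same approach as the paper. The only notable variation is in how you pin down $k_0$: you argue that conjugation induces an isomorphism of the infinite cyclic quotients $G/(G\cap\T(U))\to G'/(G'\cap\T(U))$ and hence must send a generator to a generator, whereas the paper first deduces $k_0\in\N^*$ from $\alpha=\alpha'^{k_0}$ and then reverses the roles of $G$ and $G'$ to obtain $\alpha'=\alpha^{l_0}$ with $l_0\in\N^*$, concluding $k_0l_0=1$. Both arguments are equally direct, and the remaining steps (the linear-part computation forcing $\lambda=0$, the lattice comparison producing the matrix $K\in\GL(3,\Z)$, and the explicit diagonal conjugation for the converse) match the paper's proof.
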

\begin{proof}
	Put $G\edf G(\beta,a,b)$, $G'\edf G(\beta',a',b')$,
$$
g_i\edf g_i(a,b),\ g'_i\edf g_i(a',b') \hbox{ for }1\leq i\leq 3,\  g_0\edf g_0(\beta),\ g'_0\edf g_0(\beta').
$$
First, suppose $G$, $G'$ are conjugate in $\Aff(U)$, and let $\theta\in\Aff(U)$,
$$
\theta\begin{pmatrix}
w\\ z	
\end{pmatrix}=\begin{pmatrix}
\mu &0\\
\lambda&\nu	
\end{pmatrix}\begin{pmatrix}
w\\z 	
\end{pmatrix}+ \begin{pmatrix}
u\\ \zeta 	
\end{pmatrix} \hbox { with } \mu\in\R^*_+,\ \nu\in\C^*,\ u\in\R,\ (\lambda,\zeta)\in\C^2
$$
 be such that 
\begin{equation}\label{GG'}
G(\beta',a',b')=\theta G(\beta,a,b)  \theta^{-1}.	
\end{equation}
Formula (\ref{GG'}) shows in particular that
\begin{equation}
g'\edf \theta\circ g_0\circ\theta^{-1}\in G'.
\end{equation}
We can write
$$
g'=g_3'^{k_3}g_2'^{k_2}g_1'^{k_1}g_0'^{k_0}
$$
with $k_i\in\Z$. Comparing the linear parts of $g'$ and $\theta\circ g_0\circ \theta^{-1}$ we obtain
\begin{equation}
\mu^{-1}\lambda(|\beta|^{-2}-\beta)=0,\ |\beta|^{-2}	=|\beta'|^{-2k_0},\ \beta=\beta'^{k_0},
\end{equation}
in particular $\lambda=0$ (i.e. the linear part of $\theta$ is diagonal) and (since $|\beta|<1$, $|\beta'|<1$) $k_0\in\N^*$. Changing the roles of $G$ and $G'$ we obtain a formula of the form $\beta'=\beta^{l_0}$ with $l_0\in\N^*$. It follows $\beta=\beta^{k_0l_0}$, so $k_0=l_0=1$  and $\beta=\beta'$.

Note first that (\ref{GG'}) implies
\begin{equation}
 \theta (G\cap\T(U))\theta^{-1}=(\theta G\theta^{-1}) \cap (\theta\T(U)\theta^{-1}) =(\theta G\theta^{-1})\cap\T(U)=G'\cap\T(U).
\end{equation}
Taking into account Remark \ref{ginG}(2) it follows that $\gamma'_i\edf \theta g_i\theta^{-1}$ belongs to 
$$G'\cap\T(U)=\langle g'_1,g'_2,g'_3\rangle$$
 for $1\leq i\leq 3$, and $(\gamma'_1,\gamma'_2,\gamma'_3)$ is a system of generators of this free abelian group. In other words there exists $K\in\GL(3,\Z)$ such that
\begin{equation}\label{gamma'i1}
\gamma'_i =g_1'^{k_{i1}}g_2'^{k_{i2}}g_3'^{k_{i3}} \hbox{ for }1\leq i\leq 3.
\end{equation}
On the other hand, since $\lambda=0$, we have
\begin{equation}\label{gamma'i2}
\gamma'_i\begin{pmatrix}
w\\z 	
\end{pmatrix}= (\theta g_i\theta^{-1})\begin{pmatrix}
w\\z 	
\end{pmatrix}=\begin{pmatrix}
w+\mu a_i\\z+ \nu b_i 	
\end{pmatrix}.
\end{equation}
Comparing (\ref{gamma'i1}) with (\ref{gamma'i2}) we obtain $ a'=K^{-1}\mu a$, $b'=K^{-1}\nu b$, with $K^{-1}\in\GL(3,\Z)$, which proves the claim. 
\\ \\
2. Conversely, suppose that $\beta=\beta'$ and  $(a',b')\in \GL(3,\Z)\times (\R^*_{+}\times \C^*)(a,b)$. Therefore there exists $(K,(\mu,\nu))\in\GL(3,\Z)\times (\R^*_+\times \C^*)$ such that 

\begin{equation}\label{Kmn1}
	(a',b')\edf K\cdot((\mu,\nu)\cdot(a,b)).
 \end{equation}

The linear transformation $\phi$ defined by the diagonal matrix $\begin{pmatrix} \mu&0\\ 0 &\nu\end{pmatrix}$ belongs to $\Aff(U)$ and (\ref{Kmn1}) shows that
$$
\phi g_i\phi^{-1}=g_1'^{l_{i1}}g_2'^{l_{i2}}g_3'^{l_{i3}} \hbox{ for }1\leq i\leq 3,
$$
where $L=(l_{ij})\coloneq K^{-1}$. Noting that $\phi g_0\phi^{-1}=g_0=g_0'$, it follows that 
$$(\phi g_0\phi^{-1},\phi g_1\phi^{-1}, \phi g_2\phi^{-1}, \phi g_3\phi^{-1})$$
 is a system of generators of $G'$, so $\phi G\phi^{-1}=G'$. 
\end{proof}

\begin{pr}\label{orbits-similitude}
Let $\beta$ be an admissible complex number. Let $(a,b)$, $(a',b') \in P_{\beta}$ be  $\beta$-compatible pairs. The following conditions are equivalent:
\begin{enumerate}
\item $(a',b')\in \big(\GL(3,\Z)\times (\R^*_{+}\times \C^*)\big)(a,b)$
	\item $M(\beta,a,b)\sim  M(\beta,a',b')$,
where $\sim$ denotes the similarity  relation on $\SL(3,\Z)$, i.e. the equivalence relation defined by conjugation with matrices in  $\GL(3,\Z)$.
\end{enumerate}
\end{pr}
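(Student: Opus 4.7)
The plan is to exploit the diagonalization
$$M(\alpha,\beta,a,b) = P_{ab}\, D\, P_{ab}^{-1},$$
where $P_{ab}$ denotes the $3\times 3$ complex matrix with columns $a$, $b$, $\bar b$ and $D = \mathrm{diag}(\alpha,\beta,\bar\beta)$. The $\alpha$-, $\beta$- and $\bar\beta$-eigenspaces of $M(\alpha,\beta,a,b)$ are therefore $\C a$, $\C b$, $\C\bar b$ respectively, and similarly for $M(\alpha,\beta,a',b')$. Since $\alpha$, $\beta$, $\bar\beta$ are pairwise distinct (by admissibility and Remark \ref{P(X)-is-irred}), these eigenspaces are one-dimensional and uniquely determined by the matrix.

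For the implication (1) $\Rightarrow$ (2), assume $(a',b') = (\mu K a,\nu K b)$ with $K\in\GL(3,\Z)$, $\mu\in\R^*_+$, $\nu\in\C^*$. Since $K$ is real, $\bar{b'} = \bar\nu K\bar b$, so $P_{a'b'} = K\, P_{ab}\,\Delta$ with $\Delta = \mathrm{diag}(\mu,\nu,\bar\nu)$. Because $\Delta$ is diagonal it commutes with $D$, and therefore
$$M(\alpha,\beta,a',b') = K P_{ab}\,\Delta D \Delta^{-1} P_{ab}^{-1} K^{-1} = K\, M(\alpha,\beta,a,b)\, K^{-1},$$
which is the desired similarity.

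For the converse (2) $\Rightarrow$ (1), let $K\in\GL(3,\Z)$ satisfy $M(\alpha,\beta,a',b') = K M(\alpha,\beta,a,b) K^{-1}$. Then $K$ carries each eigenspace of $M(\alpha,\beta,a,b)$ onto the eigenspace of $M(\alpha,\beta,a',b')$ for the same eigenvalue. In particular $K(\C a) = \C a'$ and $K(\C b) = \C b'$, yielding scalars $\mu,\nu\in\C^*$ with $a' = \mu K a$ and $b' = \nu K b$; the relation $\bar{b'} = \bar\nu K\bar b$ follows automatically since $K$ has real entries. Because $a, a'\in\R^3\setminus\{0\}$ and $K$ is real, $\mu$ must be real.

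The one subtlety is to ensure $\mu\in\R^*_+$. If $\mu<0$, I replace the triple $(K,\mu,\nu)$ by $(-K,-\mu,-\nu)$: clearly $-K\in\GL(3,\Z)$ and $-\nu\in\C^*$, the identities $a' = (-\mu)(-K)a$ and $b' = (-\nu)(-K)b$ still hold, and now the scalar multiplying $K$ on $a$ is positive. This exhibits $(a',b')$ as a point of the $\GL(3,\Z)\times(\R^*_+\times\C^*)$-orbit of $(a,b)$. No genuine obstacle arises; the only step requiring any care is this sign adjustment, which is possible precisely because the ambiguity in each eigenspace is a one-dimensional scalar.
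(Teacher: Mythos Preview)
Your proof is correct and follows essentially the same approach as the paper: both directions use the diagonalization $M(\alpha,\beta,a,b)=P_{ab}DP_{ab}^{-1}$, the identification of the one-dimensional eigenspaces, and the sign adjustment $K\mapsto -K$ when the real scalar turns out negative. The only cosmetic difference is that the paper writes the conjugation in the other direction (taking $K$ with $M(\alpha,\beta,a,b)=K M(\alpha,\beta,a',b')K^{-1}$) and phrases the conclusion via $K^{-1}$, but the argument is the same.
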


\begin{proof}
$(1)\Rightarrow (2)$:	Suppose that $(a',b')\in \GL(3,\Z)\times (\R^*_{+}\times \C^*)(a,b)$. Therefore there exists $(K,(\mu,\nu))\in\GL(3,\Z)\times (\R^*_+\times \C^*)$ such that 
\begin{equation}\label{Kmn2}
	(a',b')\edf K\cdot((\mu,\nu)\cdot(a,b))\edf (K\mu a,K\nu b).
 \end{equation}
Then, %
$$
\begin{pmatrix} a'_1 & b'_1 & \overline b'_1 \\ a'_2& b'_2 &\overline b'_2 \\ a'_3 & b'_3 & \overline b'_3 \end{pmatrix}=K\begin{pmatrix} a_1 & b_1 & \overline b_1 \\ a_2& b_2 &\overline b_2 \\ a_3 & b_3 & \overline b_3 \end{pmatrix}\begin{pmatrix} \mu & 0 & 0 \\ 0&\nu &0 \\0 &0 & \bar\nu\end{pmatrix},$$
so 
$$
M(\beta,a',b')=K M(\beta,a,b)K^{-1} ,
$$
 which proves the claim. 
\\ \\
  $(2)\Rightarrow (1)$: Suppose that  $M(\beta,a,b)\sim M'(\beta,a',b')$. Therefore there exists $K\in\GL(3,\Z)$ such that  $M(\beta,a,b)=K M(\beta,a',b')K^{-1}$, so comparing the eigenspaces of the two matrices, it follows that $Ka'\in\R^*a$, $Kb'\in\C^*b$, so there exists $(\lg,\mg)\in\R^*\times\C^*$ such that
\begin{equation}\label{lgmg}
Ka'=\lg a,\ Kb'=\mg b.
\end{equation}
If $\lg>0$, it follows that $(a',b')\edf K^{-1}\cdot ((\lg,\mg)\cdot (a,b) $, so the claim is verified.
If  $\lg<0$, then $
(-K)a'=(-\lg) a,\ (-K)b'=(-\mg) b$.
Therefore 
$$
(a',b')\edf -K^{-1}\cdot ((-\lg,-\mg)\cdot (a,b),
$$
with $(-K^{-1},((-\lg,-\mg))\in\GL(3,\Z)\times (\R^*_{+}\times \C^*)$.
\end{proof}
Combining Propositions \ref{equiv-sim-orbits}, \ref{orbits-similitude} and taking into account Corollary \ref{ClassifCoro}, we obtain: 
\begin{co} \label{simeq-similitude}
Let $\beta$, $\beta'$ be  admissible complex numbers. Let $(a,b)\in P_{\beta}$ be a  $\beta$-compatible pair,  and let $(a',b')\in P_{\beta'}$ be a   $\beta'$-compatible pair. 

Then  
	$$S^{\beta}_{a,b}\simeq S^{\beta'}_{a',b'}\Longleftrightarrow \begin{cases}
		\beta=\beta' \\ \rm and \\ M(\beta,a,b)\sim  M(\beta',a',b')
	\end{cases},$$
where $\sim$ denotes the similarity  relation on $\SL(3,\Z)$, i.e. the equivalence relation defined by conjugation with matrices in  $\GL(3,\Z)$.
\end{co}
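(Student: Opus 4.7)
The plan is to assemble this corollary directly from the three preceding results without introducing any new ingredients. Since by construction $S^{\alpha,\beta}_{a,b} = U/G(\alpha,\beta,a,b)$ and $S^{\alpha',\beta'}_{a',b'} = U/G(\alpha',\beta',a',b')$, I would first apply Corollary \ref{ClassifCoro} to reformulate the biholomorphism $S^{\alpha,\beta}_{a,b}\simeq S^{\alpha',\beta'}_{a',b'}$ as the statement that the subgroups $G(\alpha,\beta,a,b)$ and $G(\alpha',\beta',a',b')$ of $\Aff(U)$ are conjugate, which is precisely the defining condition of the equivalence relation $\sim$ on $\Pg$ introduced in Proposition \ref{equiv-sim-orbits}.

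Next I would invoke Proposition \ref{equiv-sim-orbits} to replace this conjugacy by the pair of conditions $(\alpha,\beta)=(\alpha',\beta')$ and $(a',b') \in \bigl(\GL(3,\Z)\times(\R^*_{+}\times\C^*)\bigr)\cdot(a,b)$. Once the equality of admissible pairs has been secured, both $(a,b)$ and $(a',b')$ live in the same set ${\cal P}_{\alpha,\beta}$, so Proposition \ref{orbits-similitude} applies and converts the orbit condition into the $\GL(3,\Z)$-similarity of the $\SL(3,\Z)$-matrices $M(\alpha,\beta,a,b)$ and $M(\alpha,\beta,a',b')=M(\alpha',\beta',a',b')$. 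Chaining these three equivalences delivers the claimed ``if and only if''.

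The only step requiring any care is the order of quantifiers: Proposition \ref{orbits-similitude} is formulated for a \emph{single} admissible pair $(\alpha,\beta)$, so it may only be applied after Proposition \ref{equiv-sim-orbits} has forced $(\alpha,\beta)=(\alpha',\beta')$. The rest is a routine concatenation. The genuine difficulty has already been absorbed upstream: it lies in Theorem \ref{tilde f is affine} (which underpins Corollary \ref{ClassifCoro} by forcing any lift of a biholomorphism to be affine) and in the rigidity computation inside the proof of Proposition \ref{equiv-sim-orbits} that pins down $k_0=1$ and the diagonality of the conjugating affine transformation. There is therefore no remaining obstacle at the level of the present corollary.
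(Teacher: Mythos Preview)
Your proposal is correct and follows exactly the same route as the paper: the corollary is obtained by combining Corollary~\ref{ClassifCoro} with Propositions~\ref{equiv-sim-orbits} and~\ref{orbits-similitude}, in that logical order. Your remark about applying Proposition~\ref{orbits-similitude} only after $(\alpha,\beta)=(\alpha',\beta')$ has been established is on point, and the paper's one-line proof implicitly relies on the same observation.
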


Now put
$${\cal M}\edf \big\{M\in\SL(3,\Z)|\ \Spec_\C(M)\supsetneq \Spec_\R(M)\subset ]1,+\infty[\big\}.$$
For a similarity class $\Mg=[M]\in \qmod{{\cal M}}{\sim}$\,, put $\Spec_\R(\Mg)= \Spec_\R(M)$, $\Spec_\C(\Mg)= \Spec_\C(M)$.

Note that knowing the similarity class of $M(\beta,a,b)$,  $\beta$ {\it is only determined  up to conjugation}. With this remark, Corollary \ref{simeq-similitude} becomes:

\begin{thry}\label{Class-Type-I-similitude}
	Biholomorphism classes of Inoue surfaces of type 1 correspond bijectively to  elements of  the set
$$
\left \{(\Mg,\beta)|\ \Mg\in \qmod{{\cal M}}{\sim},\ \beta\in \Spec_\C(\Mg)\setminus \Spec_\R(\Mg)\right\}=\hspace{-1.5mm}\coprod_{\Mg\in \qmod{{\cal M}}{\sim}} \big (\Spec_\C(\Mg)\setminus \Spec_\R(\Mg)\big ).
$$ 
The pair corresponding to the surface $S^{\beta}_{a,b}$ is $([M(\beta,a,b)],\beta)$.
	\end{thry}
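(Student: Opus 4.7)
\medskip

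\textbf{Proof proposal.} The plan is to observe that Corollary \ref{simeq-similitude} already contains essentially all the content: it states that $S^{\alpha,\beta}_{a,b}\simeq S^{\alpha',\beta'}_{a',b'}$ iff $(\alpha,\beta)=(\alpha',\beta')$ and $M(\alpha,\beta,a,b)\sim M(\alpha',\beta',a',b')$. The theorem is a repackaging of this equivalence in terms of the invariant pair $(\Mg,\beta)$, so I would define the map
$$\Phi\colon \{[S^{\alpha,\beta}_{a,b}]\}\longrightarrow \coprod_{\Mg\in \mathcal{M}/\sim}\big(\Spec_\C(\Mg)\setminus\Spec_\R(\Mg)\big),\qquad [S^{\alpha,\beta}_{a,b}]\longmapsto \big([M(\alpha,\beta,a,b)],\beta\big),$$
and check that it is a well-defined bijection.

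Well-definedness of $\Phi$ is exactly the direct implication of Corollary \ref{simeq-similitude}: biholomorphic surfaces share the same $(\alpha,\beta)$ and yield $\GL(3,\Z)$-similar matrices, hence map to the same pair $(\Mg,\beta)$. For injectivity, given $\Phi([S^{\alpha,\beta}_{a,b}])=\Phi([S^{\alpha',\beta'}_{a',b'}])$, the second coordinates give $\beta=\beta'$ immediately. To deduce $\alpha=\alpha'$, I would use that $\alpha$ is recovered intrinsically from the similarity class $\Mg$ as the unique element of $\Spec_\R(\Mg)\cap]1,+\infty[$: by the definition of $\mathcal{M}$ we have $\Spec_\R(M)\subset]1,+\infty[$, and since $\det M=1$ with non-real roots $\beta,\bar\beta$, exactly one real eigenvalue exists (and it lies in $]1,+\infty[$, with $|\beta|^2=\alpha^{-1}$). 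With both $(\alpha,\beta)=(\alpha',\beta')$ and the matrix similarity in hand, the converse direction of Corollary \ref{simeq-similitude} yields the biholomorphism.

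For surjectivity, given a pair $(\Mg,\beta)$ with $\Mg\in\mathcal{M}/\sim$ and $\beta\in \Spec_\C(\Mg)\setminus\Spec_\R(\Mg)$, I would pick any representative $M\in\Mg$, let $\alpha$ be its real eigenvalue (in $]1,+\infty[$), and choose a real eigenvector $a\in\R^3$ for $\alpha$ and a complex eigenvector $b\in\C^3$ for $\beta$. The triple $a,b,\bar b$ is then linearly independent over $\C$ because it consists of eigenvectors for three distinct eigenvalues, and by construction $M(\alpha,\beta,a,b)=M\in M_3(\Z)$, so $(a,b)$ is $(\alpha,\beta)$-compatible. Admissibility of $(\alpha,\beta)$ follows from the fact that $M\in\SL(3,\Z)$ has characteristic polynomial in $\Z[X]$ of the form $X^3-\theta_2 X^2+\theta_1 X-1$, whose roots are exactly $\alpha,\beta,\bar\beta$. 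Thus $[S^{\alpha,\beta}_{a,b}]$ is a preimage of $(\Mg,\beta)$ under $\Phi$.

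There is no substantive obstacle—the geometric content is already captured by Theorem \ref{tilde f is affine} and its algebraic reformulation in Corollary \ref{simeq-similitude}. The only subtle bookkeeping point is the remark preceding the theorem: the similarity class $\Mg$ determines $\alpha$ uniquely but only determines the unordered pair $\{\beta,\bar\beta\}$, so each $\Mg$ contributes precisely two biholomorphism classes indexed by the choice of $\beta\in\Spec_\C(\Mg)\setminus\R$. This is exactly what is expressed by the disjoint union on the right-hand side of the statement.
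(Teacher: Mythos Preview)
Your proposal is correct and follows exactly the paper's approach: the paper simply remarks that the similarity class $\Mg$ determines $\alpha$ uniquely while $\beta$ is only determined up to conjugation, and then states that ``with this remark, Corollary \ref{simeq-similitude} becomes'' the theorem. You have written out the repackaging (well-definedness, injectivity, surjectivity of $\Phi$) in more detail than the paper does, but the argument is the same.
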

Therefore, 
\begin{re} \label{Class-Type-I-similitude-rem} The set of  biholomorphism classes of Inoue surfaces of type 1 comes with a natural surjective map onto the set ${{\cal M}}/{\sim}$ of similarity class of matrices $M\in {\cal M}$. For any  similarity  class $\Mg$ we have {\it two} biholomorphism classes of type I Inoue surfaces. 
	
\end{re}

\begin{co}\label{S-non-isom-barS}
For any type I Inoue surface $S$, we have	 $S\not  \simeq \bar S$. In particular   a type I Inoue surface does not admit any Real structure\footnote{See \cite{Fr} and \cite{Kh} for classification results for Real structures on other classes of non-Kählerian surfaces.}.
\end{co}
\begin{proof}
Let $c:U\to U$ be the canonical Real structure  on $U$ defined by
$
c(w,z)=(-\bar w,\bar z)$. 
Note that $c$ induces an anti-holomorphic  diffeomorphism   
$$
\tilde c: U/G(\beta,a,b)\to U/c\,G(\beta,a,b)\,c^{-1},
$$
which gives a biholomorphism $
\bar S^{\beta}_{a,b}\simeq  U/c\,G(\beta,a,b)\,c^{-1}$.
Elementary computations give 
$$c\circ g_0(\beta)\circ c^{-1}=g_0(\bar \beta),\ c\circ g_i(a,b)\circ c^{-1}=g_i(-a,\bar b) \hbox{ for }1\leq i\leq 3,$$
so $
c\,G(\beta,a,b)\,c^{-1}=G(\bar\beta,-a,\bar b)$,
so $
\bar S^{\beta}_{a,b}\simeq S^{\bar \beta}_{-a,\bar b}$.  We have
%
$M(\bar \beta,-a,\bar b)=M(\beta,a,b)$. Therefore, via the bijection given by Theorem \ref{Class-Type-I-similitude}, $S^{\beta}_{a,b}$ et $\bar S^{\beta}_{a,b}$ correspond to the  same similarity class $\Mg$, but to different eigenvalues $\beta$.
\end{proof}

As explained in the introduction, for any polynomial $P(X)=X^3-\theta_2 X^2+\theta_1 X-1\in\Z[X]$ with a real root $\alpha\in]1,+\infty[$ and two non-real roots, the set of $\GL(3,\Z)$-similarity classes of matrices $M\in \SL(3,\Z)$ with $\chi_M(X)=P(X)$ correspond bijectively  to equivalence classes of ideals in the order $\Z[\alpha]$. This follows from the Latimer-MacDuffee Theorem taking into account Remark \ref{P(X)-is-irred}. Since the set of equivalence classes of ideals in an order is finite \cite[Theorem 3, p. 128]{BoSh}, we obtain the following finiteness theorem:
\begin{thry}\label{Finiteness-Theorem-TypeI}
To any polynomial 	$P(X)=X^3-\theta_2 X^2+\theta_1 X-1\in\Z[X]$ with a real root $\alpha\in]1,+\infty[$ and two non-real roots, correspond exactly $2 h_{\Z[\alpha]}$ isomorphism classes of type I Inoue surfaces, where $h_{\Z[\alpha]}$ is the number of equivalence classes of ideals in the order $\Z[\alpha]$. 
\end{thry}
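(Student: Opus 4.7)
The plan is to combine Theorem~\ref{Class-Type-I-similitude} with the Latimer--MacDuffee theorem. By Theorem~\ref{Class-Type-I-similitude}, the set of biholomorphism classes of type~I Inoue surfaces is in bijection with
$$
\coprod_{\Mg\in {\cal M}/\sim} \bigl(\Spec_\C(\Mg)\setminus \Spec_\R(\Mg)\bigr).
$$
The first step will be to restrict this disjoint union to those similarity classes $\Mg$ whose characteristic polynomial equals the given $P(X)$. Since $\chi_M$ is a similarity invariant, and since two matrices $M,M'\in\cal M$ sharing the characteristic polynomial $P$ satisfy $\Spec_\C(M)=\Spec_\C(M')=\{\alpha,\beta,\bar\beta\}$, the fibre $\Spec_\C(\Mg)\setminus \Spec_\R(\Mg)=\{\beta,\bar\beta\}$ has exactly two elements for each such $\Mg$. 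Thus the count reduces to $2\cdot\#\{\Mg\in{\cal M}/\sim : \chi_\Mg=P\}$.

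Next, I would identify the similarity classes in $\cal M$ with characteristic polynomial $P$ with all $\GL(3,\Z)$-similarity classes of integer matrices having characteristic polynomial $P$. Any $M\in M_3(\Z)$ with $\chi_M=P$ automatically satisfies $\det(M)=1$ (from the constant term $-1$ of $P$), so $M\in\SL(3,\Z)$; moreover the spectrum condition defining $\cal M$ is built into the hypothesis on $P$ (exactly one real root, which lies in $]1,+\infty[$). Hence the membership condition in $\cal M$ is automatic, and it suffices to count $\GL(3,\Z)$-similarity classes with characteristic polynomial $P$.

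Here I invoke the Latimer--MacDuffee theorem, whose hypothesis requires $P$ to be irreducible in $\Q[X]$; this is guaranteed by Remark~\ref{P(X)-is-irred}. The theorem then yields a canonical bijection between $\GL(3,\Z)$-similarity classes of integer matrices with characteristic polynomial $P$ and equivalence classes of ideals of the order $\Z[\alpha]$, the number of which is by definition $h_{\Z[\alpha]}$. Combining with the factor $2$ obtained above gives exactly $2\,h_{\Z[\alpha]}$ biholomorphism classes of type~I Inoue surfaces associated with $P$.

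No genuine obstacle arises here since all the ingredients have been prepared earlier in the paper. The only point deserving care is the verification that $P$ irreducible has exactly two non-real roots (forming a complex conjugate pair) so that the fibre over each similarity class has cardinality precisely $2$, and that $\cal M$ is defined so as to incorporate no supplementary constraint beyond $\chi_M=P$ once $P$ is of the stated form; both are immediate.
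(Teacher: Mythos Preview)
Your proof is correct and follows essentially the same route as the paper: combine Theorem~\ref{Class-Type-I-similitude} (two non-real eigenvalues per similarity class) with the Latimer--MacDuffee bijection between $\GL(3,\Z)$-similarity classes with characteristic polynomial $P$ and ideal classes of $\Z[\alpha]$, using Remark~\ref{P(X)-is-irred} for irreducibility. Your explicit verification that every integer matrix with characteristic polynomial $P$ automatically lies in ${\cal M}$ is a useful detail that the paper leaves implicit.
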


Note that in general the order $\Z[\alpha]\subset K\edf \Q(\alpha)$ is not maximal, i.e. it does not necessarily coincide with the ring ${\cal O}_K$ of integers of the algebraic number field $K$. We are grateful to Stéphane Louboutin for explaining us the following example:
\begin{ex}\label{Louboutin}
The polynomial $P_1(X)= X^3-2X^2-2X-1$ has a  single real root $\varepsilon \approx 2.83117$ and its discriminant is $\Delta_1=-83$. Put $K=\Q(\varepsilon)$. Using \cite[formula (8) p. 38]{Mil} and the equality $\mathrm{disc}(1,\varepsilon,\varepsilon^2)=\Delta_1$ (see  \cite[p. 37]{Mil}), we obtain
$$
\Delta_1=[{\cal O}_K:\Z[\varepsilon]]^2\mathrm{disc}({\cal O}_K)	.
$$
Since $\Delta_1$ is a prime number, it follows  $\mathrm{disc}({\cal O}_K)=\Delta_1=-83$ and $[{\cal O}_K:\Z[\varepsilon]]=1$, i.e.  $\Z[\varepsilon]={\cal O}_K$.  Now note that $\alpha\edf \varepsilon^2\approx     8.01556$ is the only real root of the polynomial $P_2(X)=X^3-8X^2-1$ whose discriminant is $\Delta_2=25 \Delta_1$. Noting that $\alpha \varepsilon-8\alpha-1=0$, it follows $\Q(\alpha)=\Q(\varepsilon)=K$, and formula \cite[formula (8) p. 38]{Mil} applied to $\alpha$ gives $\Delta_2=[{\cal O}_K:\Z[\alpha]]^2\mathrm{disc}({\cal O}_K)$, so $[{\cal O}_K:\Z[\alpha]]=5$.

\end{ex}

\subsection{The classification of type II Inoue surfaces}
\label{ClassTypeII-section}

Recall from section \ref{Intro-typeII} that the space of parameters for type II Inoue surfaces is
\begin{align*}
\Pg=\{(\alpha,a,b,c,r,t)\in\ ]1,+\infty[\times \R^2\times\R^2\times\R^2\times\N^*&\times\C|\ \alpha \hbox{ is $S^+$-admissible, and}\\ 
&(a,b,c)\hbox{ is } (\alpha,r)-\hbox{compatible}\}.	
\end{align*}

We begin with the simple
\begin{re}\label{r=r'}
Let $\pg=(\alpha,a,b,c,r,t)$, $\pg'=(\alpha',a',b',c',r',t')\in\Pg$ such that $S_\pg\simeq S_{\pg'}$. Then $\alpha=\alpha'$, $r=r'$.
\end{re}
\begin{proof}

By  Corollary \ref{ClassifCoro}, there exists $\theta\in\Aff(U)$ such that 
$$\theta G(\alpha,a,b,c,r,t)\theta^{-1}=G(\alpha',a',b',c',r',t').$$

In particular $\theta g_0(\alpha,t)\theta^{-1}= g'$ for an element $g'\in G(\alpha',a',b',c',r',t')$. Using Remark 
\ref{gen-form-g-in-G(a,b,c,r-t)} and the explicit computation of the right hand term of formula (\ref{g-in-G(a,b,c,r,t)}), we obtain
$\alpha=\alpha'^l$ with $l\in\Z$. Since $\alpha$, $\alpha'>1$, we have $l\in\N^*$. Changing the roles, we obtain $\alpha'=\alpha^{l'}$ with $l'\in\N^*$.  It follows $ll'=1$, so $l=l'=1$, so $\alpha=\alpha'$.

We have 
$$G(a,b,c,r)= G(\alpha,a,b,c,r,t)\cap \Aff^1_1,\ G(a',b',c',r')= G(\alpha',a',b',c',r',t')\cap \Aff^1_1.$$
Since $\Aff^1_1$ is normal in $\Aff(U)$  and $\theta G(\alpha,a,b,c,r,t)\theta^{-1}=G(\alpha',a',b',c',r',t')$, it follows

\begin{equation}\label{conj-eq}
\theta G(a,b,c,r)\theta^{-1}=G(a',b',c',r').
\end{equation}

This implies 
$$
\theta (G(a,b,c,r)\cap \T(U)) \theta^{-1}=G(a',b',c',r')\cap (\theta  \T(U)\theta^{-1})=G(a',b',c',r')\cap\T(U),
$$
so 
$$
\theta \langle g_3(a,b,r)\rangle \theta^{-1}= \langle g_3(a',b',r')\rangle.
$$
On the other hand formula (\ref{conj-eq}) also implies
$$
\theta [G(a,b,c,r),G(a,b,c,r)]\theta^{-1}=[G(a',b',c',r'),G(a',b',c',r')].
$$
Therefore the isomorphism $\langle g_3(a,b,r)\rangle\to \langle g_3(a',b',r')\rangle$ induced by the interior automorphism $\iota_\theta$ maps the subgroup $ [G(a,b,c,r),G(a,b,c,r)]$ of  $\langle g_3(a,b,r)\rangle$ onto the subgroup $G(a',b',c',r'),G(a',b',c',r')]$ of $\langle g_3(a',b',r')\rangle$. It follows that $\iota_\theta$  induces an isomorphism
$$
\begin{tikzcd}
 \qmod{\langle g_3(a,b,r)\rangle}{ [G(a,b,c,r),G(a,b,c,r)]}\ar[d, "\simeq"]\ar[r, "\simeq"]& \Z_r \\ \qmod{\langle g_3(a',b',r')\rangle}{ [G(a',b',c',r'),G(a',b',c',r')]} \ar[r, "\simeq"]& \Z_{r'}
\end{tikzcd}.
$$
On the right we have used the isomorphisms given by Remark \ref{[GG(a,b,c,r),GG(a,b,c,r)]}. So $r=r'$. 
 \end{proof}

Remark \ref{r=r'} shows that, for any pair $(\alpha,r)\in]1,+\infty[\times \N^*$ with $\alpha$ $S^+$-admissible, the subset $\Pg_{\alpha,r}$ obtained from $\Pg$ by fixing $\alpha$ and $r$ is saturated with respect to the equivalence relation defined by the condition $S_\pg\simeq S_{\pg'}$. 

Therefore we will fix such a pair $(\alpha,r)$ and we will study the induced equivalence relation on the subset 
$\Pg_{\alpha,r}$, which is obviously identified with the product 
$${\cal A}_{\alpha,r}\edf{\cal T}_{\alpha,r}\times\C,$$
 where ${\cal T}_{\alpha,r}$ is the space of $(\alpha,r)$-compatible triples  in the sense of Definition \ref{Def-alpha-r-compat}.
\vspace{2mm}

We will endow ${\cal T}_{\alpha,r}$  with its natural topology, namely with the coarsest topology which makes the maps
$$
(a,b,c)\mapsto N(\alpha,a,b)\in \SL(2,\Z), (a,b,c)\mapsto \pi_{a,b,r}(c)\in\Z^2,\ (a,b,c)\mapsto (a,b)\in \R^2
$$
continuous. With respect to this topology, 
$$
 \big\{\{(\mu,\rho)\mapsto (\mu a, \rho b,\mu\rho c)|\ \mu,\ \rho\in ]1-\varepsilon,1+\varepsilon[\}|\ \varepsilon\in]0,1[\big\}
$$
is a fundamental system of neighbourhoods of $(a,b,c)\in {\cal T}_{\alpha,r}$, and we endow ${\cal A}_{\alpha,r}$ with the product topology. The quotient spaces we define will be endowed with the quotient topologies.

\begin{re}\label{ClassRem(alpha,r)}
Taking into account Corollary \ref {ClassifCoro}, our problem becomes: 
\begin{pb} Let $(\alpha,r)\in]1,+\infty[\times \N^*$ with $\alpha$ $S^+$-admissible.
Describe explicitly the quotient of ${\cal A}_{\alpha,r}\edf{\cal T}_{\alpha,r}	\times\C$ by the equivalence relation
$$
(a,b,c,t)\sim (a',b',c',t') \hbox{ if $G(\alpha,a,b,c,r,t)$, $G(\alpha,a',b',c',r,t')$ are conj. in $\Aff(U)$}.
$$
\end{pb}
\end{re}

\subsubsection{Parameterizing the groups defining type II Inoue surfaces}
We will first study the quotient of ${\cal A}_{\alpha,r}$
 by the finer equivalence relation
$$
(a,b,c,t)\approx  (a',b',c',t') \hbox{ if $G(\alpha,a,b,c,r,t)=G(\alpha,a',b',c',r,t')$}.
$$

The quotient set ${\cal A}_{\alpha,r}/\approx$ is obviously identified with the set of all subgroups $G\subset \Aff(U)$ defining type II Inoue surfaces.
%
%
\begin{re}\label{equiv-for-c'-t'}
Let $(a,b)\in {\cal P}_\alpha$, $c$, $c'\in {\cal C}_{a,b,r}$, $t$, $t'\in \C$.	   We have the equivalences
\begin{equation}\label{s(c)} 
g_i(a,b,c')=g_i(a,b,c) g_3(r,a,b)^{s_i}\ (1\leq i\leq 2)\Leftrightarrow 	c'=c+\frac{b\wedge a}{r}\bpm s_1\\s_2\epm.
\end{equation}
\begin{equation}\label{s(t)} 
g_0(\alpha,t')=g_0(\alpha,t)g_3(r,a,b)^{s_0}\Leftrightarrow t'=t+\frac{b\wedge a}{r}s_0.
\end{equation}
\end{re}

This shows that the  action   
\begin{equation}\label{Z3-action}
\left(\bpm s_0\\s_1\\s_2\epm,(a,b,c,t)\right)\mapsto \left(a,b,c+\frac{b\wedge a}{r}\bpm s_1\\s_2\epm, t+\frac{b\wedge a}{r}s_0  \right)	
\end{equation}
of the group $\Z^3$ on   the parameter space ${\cal A}_{\alpha,r}$   is compatible with the equivalence relation $\approx$, i.e. we have
\begin{equation*}
\forall \bpm s_0\\s_1\\s_2\epm \in\Z^3\ \forall 	(a,b,c,t)\in {\cal A}_{\alpha,r},\  \left(a,b,c+\frac{b\wedge a}{r}\bpm s_1\\s_2\epm, t+\frac{b\wedge a}{r}s_0  \right)  \approx (a,b,c,t).
\end{equation*}
We will also need the action of $\Z^2$ on ${\cal T}_{\alpha,r}$ obtained by omitting the fourth component in  (\ref{Z3-action}).

For a triple $(a,b,c)\in {\cal T}_{\alpha,r}$ and $t\in\C$, put $[c]_{ab}\edf c+\frac{b\wedge a}{r}\Z^2$, $[t]_{ab}\edf t+\frac{b\wedge a}{r}\Z $ and define
\begin{align*}
\mathscr{C}_{a,b,r}&\edf {\cal C}_{a,b,r}\bigg/\frac{b\wedge a}{r}\Z^2,  \\
\mathscr{T}_{\alpha,r}&\edf \left\{(a,b,[c]_{ab})\big|\,\   (a,b)\in {\cal P}_\alpha,\ [c]_{ab}\in \mathscr{C}_{a,b,r}\right\} \simeq  {\mathcal{T}_{\alpha,r}}/\Z^2 ,
\\
\mathscr{A}_{\alpha,r}&\edf   \left\{(a,b,[c]_{ab},[t]_{ab})\big|\,\   (a,b)\in {\cal P}_\alpha,\ [c]\in \mathscr{C}_{a,b,r},\ [t]\in \C\bigg/\frac{b\wedge a}{r}\Z\right\} \simeq  {\mathcal{A}_{\alpha,r}} /\Z^3.
\end{align*}
%

%
%
Using Remark \ref{c->p-remark}, we see that the bijection $\pi_{a,b,r}:{\cal C}_{a,b,r}\to\Z^2$ induces a bijection  
$$\pg_{a,b,r}:\mathscr{C}_{a,b,r}\to \Z^2/(I_2-N)\Z^2\simeq \Z_{e_1}\oplus\Z_{e_2},$$
where $e_1$, $e_2$ are the elementary divisors of the matrix $I_2-N$. 

\begin{re}
The quotient $\mathscr{C}_{a,b,r}= {\cal C}_{a,b,r}\big/\frac{b\wedge a}{r}\Z^2$ is always finite. It reduces to a singleton if and only if $\theta=3$.
\end{re}
\begin{proof}
The second statement follows from $\det(I_2-N)=(1-\alpha	)(1-\alpha^{-1})=2-\theta$ recalling that $\theta\geq 3$.
\end{proof}

 We will regard  $\mathscr{T}_{\alpha,r}$, $\mathscr{A}_{\alpha,r}$ as spaces over the set ${\cal P}_{\alpha}$ of $\alpha$-compatible pairs. The fibre  of $\mathscr{T}_{\alpha,r}$ over a pair $(a,b)$ is identified with the finite set $\mathscr{C}_{a,b,r}\simeq \Z_{e_1}\oplus\Z_{e_2}$, whereas the fibre of $\mathscr{A}_{\alpha,r}$ over  pair $(a,b)$ is $\mathscr{C}_{a,b,r}\times (\C\big/\frac{b\wedge a}{r}\Z)\simeq (\Z_{e_1}\oplus\Z_{e_2})\times\C^*$
 \\

The next step is to introduce a natural $\GL(2,\Z)$-action on  the new parameter space $\mathscr{A}_{\alpha,r}\simeq {\cal A}_{\alpha,r}/{\Z^3}$. 
We start with the following elementary lemma: 
\begin{lm}\label{explic-C-formula}
 Let $(a,b,c) \in {\cal T}_{\alpha,r}$  and  $K\in\GL(2,\Z)$.  Put $g_i\edf g_i(a,b,c)$ for $1\leq i\leq 2$, $g_3=g_3(a,b,r)$ and
 \begin{equation}\label{K-action-on-generators}
G_1\edf g_1^{k_{11}}g_2^{k_{12}},\ G_2\edf g_1^{k_{21}}g_2^{k_{22}},\ G_3\edf g_3^{\det(K)}.	
\end{equation}
Then $G_i=g_i(A,B,C)$ and $G_3=g_3(A,B,r)$ (in the sense of formulae (\ref{Def-gi})), where 
  \begin{equation}\label{ABC} 
\begin{split}
\hspace{-2mm}A= Ka,\ B=Kb,\   
C= \frac{1}{2}\bpm A_1B_1\\A_2B_2 \epm+K\bigg(c-\frac{1}{2}\begin{pmatrix}
 a_1b_1\\
a_2b_2	
\end{pmatrix} \bigg)  +\frac{b\wedge a}{2}\bpm k_{11}k_{12}\\ k_{21}k_{22}\epm.
\end{split}   	
\end{equation}

\end{lm}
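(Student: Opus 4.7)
The proof is a purely computational verification, so the plan is to make the formulas for $g_i^n$ and for compositions as explicit as possible, then match coefficients. The easy case is $G_3$: since $g_3(a,b,r)$ is the translation $z \mapsto z + \frac{b\wedge a}{r}$, we have $G_3 = g_3^{\det K}$ acting by $z \mapsto z + \det(K)\frac{b\wedge a}{r}$, and the identity $(Kb)\wedge(Ka) = \det(K)(b\wedge a)$ for the wedge of two $2$-vectors gives exactly the translation of $g_3(A,B,r) = g_3(Ka,Kb,r)$.

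For $G_1$ and $G_2$, the first step is an induction on $n$ (valid for all $n\in\Z$, with $g_i^{-1}$ computed directly) yielding the explicit formula
\begin{equation*}
g_i^n\bpm w\\ z\epm = \bpm w + na_i\\ z + nb_i w + n c_i + \tfrac{n(n-1)}{2}a_i b_i\epm.
\end{equation*}
The linear parts of $g_1$ and $g_2$ are lower-unipotent matrices that commute, so the linear part of $G_1 = g_1^{k_{11}} g_2^{k_{12}}$ is $\bigl(\begin{smallmatrix}1&0\\ k_{11}b_1+k_{12}b_2 & 1\end{smallmatrix}\bigr) = \bigl(\begin{smallmatrix}1&0\\ B_1 & 1\end{smallmatrix}\bigr)$, which matches the linear part of $g_1(A,B,C)$; similarly for $G_2$. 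The translation part of $g_1^{k_{11}}\circ g_2^{k_{12}}$ is then $L_1^{k_{11}} v_2^{(k_{12})} + v_1^{(k_{11})}$ where $v_i^{(n)}$ denotes the translation of $g_i^n$. The first component of this sum is $k_{11}a_1 + k_{12}a_2 = (Ka)_1 = A_1$, as required.

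The only genuine work is in the second component, which expands to
\begin{equation*}
\tfrac{k_{11}(k_{11}-1)}{2}a_1 b_1 + k_{11} c_1 + k_{11} k_{12}\, b_1 a_2 + \tfrac{k_{12}(k_{12}-1)}{2}a_2 b_2 + k_{12} c_2.
\end{equation*}
Rearranging this expression one sees that it equals
\begin{equation*}
\tfrac{1}{2}A_1 B_1 + k_{11}\bigl(c_1 - \tfrac{1}{2}a_1 b_1\bigr) + k_{12}\bigl(c_2 - \tfrac{1}{2}a_2 b_2\bigr) + \tfrac{b\wedge a}{2}\, k_{11} k_{12},
\end{equation*}
which is precisely $C_1$ in the statement; the analogous identity with indices $(k_{21},k_{22})$ gives $C_2$. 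The main obstacle, and the whole point of the bilinear correction $\frac{b\wedge a}{2}\bigl(\begin{smallmatrix}k_{11}k_{12}\\ k_{21}k_{22}\end{smallmatrix}\bigr)$ in the statement, is to account for the asymmetric cross-term $k_{11}k_{12}\, b_1 a_2$ arising from the fact that $g_1$ and $g_2$ do not commute in the $z$-direction; once this is recognised the identification is straightforward algebraic rearrangement, using $b\wedge a = b_1 a_2 - b_2 a_1$ and the identity $\tfrac{1}{2}(n a_1 + m a_2)(n b_1 + m b_2) = \tfrac{n^2}{2}a_1 b_1 + \tfrac{m^2}{2}a_2 b_2 + \tfrac{nm}{2}(a_1 b_2 + a_2 b_1)$.
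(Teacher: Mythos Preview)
Your proof is correct. The paper states this lemma as ``elementary'' and does not give a separate proof; your direct computation of $g_i^n$ followed by the composition $g_1^{k_{i1}}g_2^{k_{i2}}$ and the algebraic rearrangement isolating the $\tfrac{1}{2}A_iB_i$ term and the antisymmetric correction $\tfrac{b\wedge a}{2}k_{i1}k_{i2}$ is exactly the intended verification.
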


\begin{pr} Let $(a,b,c)\in {\cal T}_{\alpha,r}$  and  $K\in\GL(2,\Z)$. Then
\begin{enumerate}
	\item The formulae (\ref{ABC})
define an element $(A,B,C)\in {\cal T}_{\alpha,r}$ which will be denoted $K\cdot (a,b,c)$.
\item The tripe  $(A,B,[C]_{AB})\in \mathscr{T}_{\alpha,r}$ depends only on the matrix $K$ and the  triple $(a,b,[c]_{ab})\in \mathscr{T}_{\alpha,r}$. Moreover, the formula
$$
K\cdot(a,b,[c]_{ab})\edf (A,B,[C]_{AB})
$$
defines an action of $\GL(2,\Z)$ on the space $\mathscr{T}_{\alpha,r}$.
\end{enumerate} 
\end{pr}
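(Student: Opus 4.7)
For part (1), the fact that $(A,B)$ is $\alpha$-compatible is immediate: the matrix $P_{AB}=\begin{pmatrix}A_1&B_1\\A_2&B_2\end{pmatrix}$ equals $KP_{ab}$, so
\[
N(\alpha,A,B)=KP_{ab}\begin{pmatrix}\alpha &0\\0&\alpha^{-1}\end{pmatrix}P_{ab}^{-1}K^{-1}=KN(\alpha,a,b)K^{-1}\in M_2(\Z),
\]
which I will denote $N'$. To check that $C$ is $(A,B,r)$-compatible, instead of substituting into (\ref{CompCond}) and wrestling with parity conditions, I will use Lemma \ref{explic-C-formula} together with the group-theoretic structure. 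By that lemma, $G_i=g_i(A,B,C)$ for $i=1,2$ and $G_3=g_3(A,B,r)$. Since each $G_i$ lies in $G(a,b,c,r)$, and since $G(a,b,c,r)$ is normal in $G(\alpha,a,b,c,r,t)$ by Remark~\ref{RelationsSubgroups}, the element $g_0G_ig_0^{-1}$ lies in $G(a,b,c,r)$ and admits a unique expression $g_1^{m_{i1}}g_2^{m_{i2}}g_3^{k_i}$ with $m_{ij},k_i\in\Z$ by Remark~\ref{gen-form-g-in-G(a,b,c,r-t)}.

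Comparing linear parts (using $Na=\alpha a$, $Nb=\alpha^{-1}b$) forces $(m_{i1},m_{i2})=((KN)_{i1},(KN)_{i2})$, and since $KN=N'K$ we may regroup the right-hand side as $G_1^{n'_{i1}}G_2^{n'_{i2}}g_3^{P_i}$ for a suitable $P_i\in\Z$ obtained from the commutation relations $[g_1,g_2]=g_3^r$. Evaluating the identity $g_0G_ig_0^{-1}=G_1^{n'_{i1}}G_2^{n'_{i2}}g_3^{P_i}$ on $(w,z)$ — the left-hand side equals $(w+\alpha A_i,z+\alpha^{-1}B_iw+C_i)$ and the right-hand side's $z$-component is $\epsilon(N')(A,B)_i+(N'C)_i+P_i\frac{b\wedge a}{r}$, where $\epsilon(N')(A,B)_i$ is Inoue's quadratic cocycle — yields
\[
(I_2-N')C-\epsilon(N')(A,B)=\frac{b\wedge a}{r}\,P.
\]
Since $\det K=\pm 1$, the lattices $\frac{b\wedge a}{r}\Z^2$ and $\frac{B\wedge A}{r}\Z^2$ coincide, and $P\in\Z^2$, giving exactly the $(A,B,r)$-compatibility condition (\ref{CompCond}). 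This proves $(A,B,C)\in\mathcal{T}_{\alpha,r}$.

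For part (2), the well-definedness on classes is direct: replacing $c$ by $c+\frac{b\wedge a}{r}s$ with $s\in\Z^2$ changes $C$ by $\frac{b\wedge a}{r}Ks\in\frac{b\wedge a}{r}\Z^2=\frac{B\wedge A}{r}\Z^2$, so $[C]_{AB}$ is unchanged. For the action axioms, $I_2\cdot(a,b,c)=(a,b,c)$ is immediate from (\ref{ABC}). For associativity, apply Lemma~\ref{explic-C-formula} twice: if $L\cdot(K\cdot(a,b,c))=(\tilde A,\tilde B,\tilde C)$ is computed by first replacing $(g_1,g_2)$ by $(G_1,G_2)=(g_1^{k_{11}}g_2^{k_{12}},g_1^{k_{21}}g_2^{k_{22}})$ and then by $(G_1^{l_{11}}G_2^{l_{12}},G_1^{l_{21}}G_2^{l_{22}})$, while $(LK)\cdot(a,b,c)=(\tilde A',\tilde B',\tilde C')$ corresponds to the pair $(g_1^{(LK)_{11}}g_2^{(LK)_{12}},g_1^{(LK)_{21}}g_2^{(LK)_{22}})$, then $\tilde A=\tilde A'=LKa$, $\tilde B=\tilde B'=LKb$, and the two affine transformations attached to each row-index differ only by some power of $g_3^r$ (coming from commutator corrections when rearranging the products via $g_1g_2=g_2g_1g_3^r$). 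Hence $\tilde C-\tilde C'\in (b\wedge a)\Z^2\subset \frac{b\wedge a}{r}\Z^2=\frac{\tilde B\wedge\tilde A}{r}\Z^2$, so $[\tilde C]=[\tilde C']$ in $\mathscr{T}_{\alpha,r}$.

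The main obstacle is conceptual rather than computational: one must resist the temptation to verify the compatibility of $C$ by brute-force substitution, which entails a delicate integrality argument for the matrix expression $K n_{\mathrm{col}}+(I_2-N')k_{\mathrm{col}}-\det(K)n'_{\mathrm{col}}\in 2\Z^2$. Routing through the normality of $G(a,b,c,r)$ in $G(\alpha,a,b,c,r,t)$ sidesteps this entirely, at the cost of reading off $P_i$ as an integer directly from the group decomposition rather than from the (equivalent) parity argument.
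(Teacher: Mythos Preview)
Your proof is correct, and for part (1) it takes a genuinely different route from the paper. The paper verifies the $(A,B,r)$-compatibility of $C$ by direct substitution into (\ref{CompCond}): it computes the vector $P$ explicitly via (\ref{P-formula}) and then proves the integrality by establishing the parity identity of Lemma~\ref{mod2congruence-lemma}, namely that
\[
K\bpm n_{11}n_{12}\\n_{21}n_{22}\epm+(I_2-\Ng)\bpm k_{11}k_{12}\\k_{21}k_{22}\epm-\varepsilon_K\bpm \ng_{11}\ng_{12}\\\ng_{21}\ng_{22}\epm\in 2\Z^2.
\]
Your approach bypasses this entirely: you observe that Remark~\ref{com-rel-II} makes the compatibility condition (\ref{CompCond}) equivalent to the commutation relation $g_0G_ig_0^{-1}=G_1^{n'_{i1}}G_2^{n'_{i2}}G_3^{p'_i}$ holding with $p'_i\in\Z$, and you obtain that relation for free from the normality of $G(a,b,c,r)$ in $G(\alpha,a,b,c,r,t)$ together with the unique normal form of Remark~\ref{gen-form-g-in-G(a,b,c,r-t)}. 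This is more conceptual and shorter; the price is that it invokes the ambient group $G(\alpha,a,b,c,r,t)$ (and a choice of $t$), whereas the paper's computation is self-contained and yields the explicit formula (\ref{P-formula}) for $P$, which is reused later (e.g.\ in Proposition~\ref{action-of-Z+on-Z2}).

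For part (2), your argument is essentially identical to the paper's: both the well-definedness on classes and the associativity are handled by tracking the $g_3$-correction coming from the commutator $[g_1,g_2]=g_3^r$, via Lemma~\ref{explic-C-formula} and equivalence (\ref{s(c)}).
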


The complicated formula for $C$ is justified by the equivalence explained in Lemma \ref{explic-C-formula} stated above. 
 Note that formula $(K,(a,b,c))\mapsto K\cdot(a,b,c)$ does {\it not} define a $\GL(2,\Z)$-action on ${\cal T}_{\alpha,r}$. Therefore factorising the space of triples ${\cal T}_{\alpha,r}$ by $ \Z^2$ is an important step in our constructions.

\begin{proof} (1)
It's easy to see that $(A,B)$ remains an $\alpha$-compatible pair. The corresponding matrix is
$$
N(\alpha,A,B)=KN(\alpha,a,b)K^{-1}.
$$
It remains to check that $C$ is $(A,B,r)$ compatible, i.e.  	that, putting 
$$\Ng=(\ng_{ij})\edf N(\alpha,A,B),$$
we have
\begin{equation}\label{to-check}
(I_2-\Ng)\bigg(C-\frac{1}{2}\bpm A_1B_1\\ A_2B_2\epm\bigg)-\frac{B\wedge A}{2}\bpm \ng_{11}\ng_{12}\\ \ng_{21}\ng _{22}\epm\in \frac{B\wedge A}{r}\Z^2.	
\end{equation}

Putting $p\edf p(a,b,c,r)$, the first term on the left in (\ref{to-check}) is
$$
(I_2-\Ng)\bigg(C-\frac{1}{2}\bpm A_1B_1\\ A_2B_2\epm\bigg)=K(I_2-N)\bigg(c-\frac{1}{2}\begin{pmatrix}
 a_1b_1\\
a_2b_2	
\end{pmatrix} \bigg)+\frac{b\wedge a}{2}(I_2-\Ng)\bpm k_{11}k_{12}\\ k_{21}k_{22}\epm=$$
$$
=b\wedge a\bigg[K\bigg(\frac{1}{2}\begin{pmatrix} n_{11}n_{12}\\
n_{21}n_{22}
\end{pmatrix}+\frac{1}{r}p\bigg)+\frac{1}{2}(I_2-\Ng)\bpm k_{11}k_{12}\\ k_{21}k_{22}\epm\bigg].
$$
Put $\varepsilon_K\edf\det(K)\in\{\pm1\}$. The element $P$ (a priori in $\R^2$) associated with $(A,B,C,r)$ via formula (\ref{c->p-formula}) is
\begin{equation}\label{P-formula}
P
%
=\varepsilon_K \bigg[ Kp+\frac{r}{2}\bigg( K
\bpm n_{11}n_{12}\\
n_{21}n_{22}\epm+ (I_2-\Ng)\bpm k_{11}k_{12}\\ k_{21}k_{22}\epm-\varepsilon_K\bpm \ng_{11}\ng_{12}\\ \ng_{21}\ng _{22}\epm\bigg)\bigg]
\end{equation}

It suffices to prove that 
$$
K\bpm n_{11}n_{12}\\
n_{21}n_{22}\epm+ \bpm k_{11}k_{12}\\ k_{21}k_{22}\epm -\Ng \bpm k_{11}k_{12}\\ k_{21}k_{22}\epm-\varepsilon_K\bpm \ng_{11}\ng_{12}\\ \ng_{21}\ng _{22}\epm\in2\Z^2.
$$
Applying the elementary Lemma \ref{mod2congruence-lemma} stated below to the pairs $(K,N)$, $(\Ng, K)$, we obtain 
$$
K\bpm n_{11}n_{12}\\
n_{21}n_{22}\epm+ \bpm k_{11}k_{12}\\ k_{21}k_{22}\epm-\begin{pmatrix}
(KN)_{11}(KN)_{12}\\
(KN)_{21}(KN)_{22}
\end{pmatrix}\in 2\Z^2,
$$
$$
\Ng \bpm k_{11}k_{12}\\ k_{21}k_{22}\epm+\varepsilon_K\bpm \ng_{11}\ng_{12}\\ \ng_{21}\ng _{22}\epm-\begin{pmatrix}
(\Ng K)_{11}(\Ng K)_{12}\\
(\Ng K)_{21}(\Ng K)_{22}
\end{pmatrix}\in 2\Z^2.
$$
It suffices to note that $\Ng K=KNK^{-1}K=KN$.
\vspace{3mm}\\
(2) For the first claim of (2) note that replacing  $(a,b,c)\in {\cal T}_{\alpha,r}$ by $(a,b,c')$ where  $c'=\frac{b\wedge a}{r}\bpm s_1\\s_2\epm $ in the formula for $K\cdot(a,b,c)$, gives the triple $(A,B,C')$, where
$$
C'=C+ \frac{b\wedge a}{r} K\bpm s_1\\s_2\epm =C+\frac{B\wedge A}{r} \det(K)^{-1}K\bpm s_1\\s_2\epm\in C+\frac{B\wedge A}{r}\Z^2. 
$$

For the second claim of (2) we have to prove that for any $(a,b,c)\in {\cal T}_{\alpha,r}$ and $K$, $L\in\GL(2,\Z)$, the triples $L\cdot(K\cdot(a,b,[c]_{ab}))$, $(LK)\cdot(a,b,[c]_{ab})$ coincide in $\mathscr{T}_{\alpha,r}$.

Put
$$
\G_1\edf G_1^{l_{11}}G_2^{l_{12}}=(g_1^{k_{11}}g_2^{k_{12}})^{l_{11}}(g_1^{k_{21}}g_2^{k_{22}})^{l_{12}},$$
$$ \G_2\edf G_1^{l_{21}}G_2^{l_{22}}=(g_1^{k_{11}}g_2^{k_{12}})^{l_{21}}(g_1^{k_{21}}g_2^{k_{22}})^{l_{22}},$$
$$
 \G_3\edf G_3^{\det(L)}=g_3^{\det(LK)}.
$$
By Lemma \ref{explic-C-formula} applied to $(A,B,C)\in {\cal T}_{\alpha,r}$ and $L\in\GL(2,\Z)$, we have 
$\G_i=g_i(\Ag,\Bg,\Cg)$, where
$$
(\Ag,\Bg,\Cg)=L\cdot(A,B,C)=L\cdot(K\cdot(a,b,c)).
$$

Put $R=LK$. Using the commutation formulae $[g_1,g_2]=g_3^r$ we obtain 
$$
\G_1=g_1^{r_{11}}g_2^{r_{12}}g_3^{s_1(L,K)}, \ \G_2= g_1^{r_{21}}g_2^{r_{22}}g_3^{s_2(L,K)}
$$
where $s_i(L,K)\in\Z$. Put $(\Ag',\Bg',\Cg')\edf R(a,b,c)$, $\G'_i\edf g_i(\Ag',\Bg',\Cg')$. We have 
$$\Ag'=(LK)a=LA=\Ag,\ \Bg'=(LK)b=LB=\Bg.
$$

By Lemma \ref{explic-C-formula},   we have 
$$
\G'_1=g_1^{r_{11}}g_2^{r_{12}},\ \G'_2=g_1^{r_{21}}g_2^{r_{22}}, 
$$
in other words for $1\leq i\leq 2$ we have
$$
g_i(\Ag,\Bg,\Cg)=g_i(\Ag,\Bg,\Cg')g_3(a,b,r)^{s_i(L,K)}=g_i(\Ag,\Bg,\Cg')g_3(\Ag,\Bg,r)^{\det(R)s_i(L,K)}. $$
Taking into account the equivalence (\ref{s(c)}), this proves that 
$$\Cg=\Cg'+ \frac{\Bg\wedge \Ag}{r}\bpm\det(R)s_1(L,K)\\\det(R)s_2(L,K) \epm,$$
 which proves the claim.

 \end{proof}

\begin{lm}\label{mod2congruence-lemma}
Let $K$, $L\in \GL(2,\Z)$. Then
\begin{equation}\label{mod2congruence}
\begin{split}
L\begin{pmatrix}
k_{11}k_{12}\\
k_{21}k_{22}	
\end{pmatrix}+\det(K)&\begin{pmatrix}
l_{11}l_{12}\\
l_{21}l_{22}	
\end{pmatrix}-\begin{pmatrix}
(LK)_{11}(LK)_{12}\\
(LK)_{21}(LK)_{22}
\end{pmatrix}=\\
=&2\bpm\frac{l_{11}-l_{11}^2}{2}k_{11}k_{12}+\frac{l_{12}-l_{12}^2}{2}k_{21}k_{22}\\\frac{l_{21}-l_{21}^2}{2}k_{11}k_{12}+\frac{l_{22}-l_{22}^2}{2}k_{21}k_{22} \epm-2k_{12} k_{21}\begin{pmatrix}
l_{11}l_{12}\\
l_{21}l_{22}	
\end{pmatrix},
\end{split}
\end{equation}
which obviously belongs to $2\Z^2$ if $K$, $L\in \GL(2,\Z)$.
\end{lm}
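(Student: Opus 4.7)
The proof is a direct algebraic verification; the identity in fact holds for arbitrary $2\times 2$ integer matrices $K,L$ (the hypothesis $K,L\in\GL(2,\Z)$ is only used afterwards, and only to ensure that the resulting vector is integer, so that the divisibility conclusion makes sense). I would therefore just expand both sides component by component.

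The plan is as follows. First, I would compute $(LK)_{ij}$ explicitly: $(LK)_{11}=l_{11}k_{11}+l_{12}k_{21}$, $(LK)_{12}=l_{11}k_{12}+l_{12}k_{22}$, and similarly for the second row. This gives
\begin{equation*}
(LK)_{11}(LK)_{12}=l_{11}^2\,k_{11}k_{12}+l_{11}l_{12}(k_{11}k_{22}+k_{12}k_{21})+l_{12}^2\,k_{21}k_{22},
\end{equation*}
and an analogous expression for $(LK)_{21}(LK)_{22}$ obtained by replacing $l_{1j}$ with $l_{2j}$. Next I substitute $\det(K)=k_{11}k_{22}-k_{12}k_{21}$ in the middle term of the left-hand side. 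The first component of the left-hand side then becomes
\begin{equation*}
l_{11}k_{11}k_{12}+l_{12}k_{21}k_{22}+l_{11}l_{12}(k_{11}k_{22}-k_{12}k_{21})-l_{11}^2k_{11}k_{12}-l_{11}l_{12}(k_{11}k_{22}+k_{12}k_{21})-l_{12}^2k_{21}k_{22}.
\end{equation*}
The two terms containing $k_{11}k_{22}$ cancel, and collecting the remaining terms yields
\begin{equation*}
(l_{11}-l_{11}^2)k_{11}k_{12}+(l_{12}-l_{12}^2)k_{21}k_{22}-2l_{11}l_{12}\,k_{12}k_{21},
\end{equation*}
which is exactly the first component of the right-hand side. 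The second component is identical in form with $l_{1j}$ replaced by $l_{2j}$ throughout, so it follows by the same computation.

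Finally, for any integer $n$, the number $n-n^2=n(1-n)$ is a product of two consecutive integers and therefore even, so $\tfrac{l_{ij}-l_{ij}^2}{2}\in\Z$. Hence the right-hand side lies in $2\Z^2$, and the displayed formula itself shows that the left-hand side is divisible by $2$ componentwise whenever $K,L$ have integer entries (the assumption $K,L\in\GL(2,\Z)$ is only used to make $\det(K)=\pm 1$ integer, which is automatic here). I do not expect any real obstacle: the only thing to watch is the cancellation of the $l_{i1}l_{i2}\,k_{11}k_{22}$ contributions, which is where the sign in $\det(K)=k_{11}k_{22}-k_{12}k_{21}$ matters.
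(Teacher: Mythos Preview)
Your proof is correct and is exactly the approach the paper takes: the paper's own proof reads simply ``Direct computation,'' and your expansion carries this out explicitly and accurately. The only minor quibble is your parenthetical about $\det(K)$: for any integer matrix $K$ the determinant is already an integer, so the hypothesis $K,L\in\GL(2,\Z)$ is in fact not needed anywhere, neither for the identity nor for the $2\Z^2$ conclusion.
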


\begin{proof} Direct computation.
	
\end{proof}

For a 4-tuple $((a,b,c),t)\in {\cal A}_{\alpha,r}$ we put 
$$
K\cdot(a,b,c,t)=(A,B,C,t)
$$

We need: the class of $K\cdot(a,b,c,t)=(A,B,C,t)$ mod $\frac{B\wedge A}{r}\Z^3$ depends only on $K$ and the class $[a,b,c,t]$.  This follows from the formula:
\begin{align*}
	K\cdot\big (a,b,c+&\frac{b\wedge a}{r}\bpm s_1\\s_2\epm ,t+\frac{b\wedge a}{r} s_0\big )\\
	&=\big(A,B,C+\frac{B\wedge A}{r}\det(K)K\bpm s_1\\s_2\epm,t+\frac{B\wedge A}{r}\det(K) s_0\big).
\end{align*}
Note also that the formula 
$$
K\cdot (a,b,[c]_{ab},[t]_{ab})\edf (A,B,[C]_{AB},[t]_{AB})
$$
defines a $\GL(2,\Z)$-action on $\mathscr{A}_{\alpha,r}$.

\begin{thry} \label{Quotient-approx} Let  $(a,b,c,t), \ (a',b',c',t')\in {\cal A}_{\alpha,r}$. The following conditions are equivalent:
\begin{enumerate}
	\item $(a,b,c,t)\approx (a',b',c',t')$.
	\item $(a',b',[c']_{a'b'},[t']_{a'b'})\in \GL(2,\Z)(a,b,[c]_{ab},[t]_{ab})$.
\end{enumerate} 
Therefore the quotient set $	{\cal A}_{\alpha,r}/\approx$ is identified with the orbit space $\qmod{\mathscr{A}_{\alpha,r}}{\GL(2,\Z)}$.
\end{thry}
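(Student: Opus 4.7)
\textbf{Proof plan for Theorem \ref{Quotient-approx}.} For the direction $(2)\Rightarrow(1)$, it suffices to verify that both the $\Z^3$-shifts (\ref{Z3-action}) and the $\GL(2,\Z)$-action preserve $\approx$. The $\Z^3$-shifts replace each generator $g_i(a,b,c)$ and $g_0(\alpha,t)$ by an $\langle g_3\rangle$-multiple via equivalences (\ref{s(c)}) and (\ref{s(t)}), so the generated group is unchanged. For $K\in\GL(2,\Z)$, Lemma \ref{explic-C-formula} identifies the elements $G_1,G_2,G_3$ of (\ref{K-action-on-generators}) with $g_1(A,B,C)$, $g_2(A,B,C)$, $g_3(A,B,r)$, where $(A,B,C)\edf K\cdot(a,b,c)$. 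Since $K^{-1}\in\GL(2,\Z)$ (using $[g_1,g_2]=g_3^r$ to absorb $g_3$-terms), each of the two systems of generators is a word in the other, so $G(A,B,C,r)=G(a,b,c,r)$; as $g_0(\alpha,t)$ is unaffected by the $K$-action, the full groups $G(\alpha,A,B,C,r,t)=G(\alpha,a,b,c,r,t)$ agree.

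For the direction $(1)\Rightarrow(2)$, write $G\edf G(\alpha,a,b,c,r,t)=G(\alpha,a',b',c',r,t')\edf G'$. Identities (\ref{inters1}) and (\ref{inters2}) of Remark \ref{RelationsSubgroups} provide intrinsic descriptions
\[
G(a,b,c,r)=G\cap\Aff^1_1(U)=G'\cap\Aff^1_1(U)=G(a',b',c',r),\qquad\langle g_3(a,b,r)\rangle=G\cap\T(U)=\langle g_3(a',b',r)\rangle.
\]
Equality of the cyclic $z$-translation groups forces $b'\wedge a'=\varepsilon(b\wedge a)$ with $\varepsilon\in\{\pm 1\}$. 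Project $\Aff^1_1(U)$ onto the abelian quotient $\Aff^1_1(U)/\T^0(U)\simeq\R\oplus\C$ recording the $w$-translation $u$ and the lower-left linear entry $\lambda$; the kernel of the induced morphism $G(a,b,c,r)\to\R\oplus\C$ is exactly $\langle g_3\rangle$ and the image is the rank-$2$ lattice $\Z(a_1,b_1)+\Z(a_2,b_2)$ ($\alpha$-compatibility gives the rank). Equating this lattice with $\Z(a'_1,b'_1)+\Z(a'_2,b'_2)$ produces a unique $K=(k_{ij})\in\GL(2,\Z)$ with $a'=Ka$, $b'=Kb$; taking determinants forces $\det K=\varepsilon$.

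Set $(A,B,C)\edf K\cdot(a,b,c)$, so $(A,B)=(a',b')$; by Lemma \ref{explic-C-formula}, $g_i(A,B,C)\in G(a',b',c',r)$ has the same $(u,\lambda)$-projection as $g_i(a',b',c')$, so $g_i(A,B,C)g_i(a',b',c')^{-1}\in\langle g_3(a',b',r)\rangle$, and equivalence (\ref{s(c)}) translates this into $c'-C\in\tfrac{b'\wedge a'}{r}\Z^2$, i.e.\ $[c']_{a'b'}=[C]_{AB}$. For the $t$-coordinate, expand $g_0(\alpha,t)\in G'$ in the normal form (\ref{g-in-G(a,b,c,r,t)}): matching $w$-linear parts forces the $g_0(\alpha,t')$-exponent to be $1$, matching $w$-translations forces the $g_1',g_2'$-exponents to vanish (using $\Q$-linear independence of $a'_1,a'_2$, which are eigenvector components of a rational matrix for the irrational eigenvalue $\alpha$), and the remaining $z$-component yields $t-t'\in\tfrac{b'\wedge a'}{r}\Z=\tfrac{b\wedge a}{r}\Z$, hence $[t]_{ab}=[t']_{a'b'}$. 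Putting these together gives $(a',b',[c']_{a'b'},[t']_{a'b'})=K\cdot(a,b,[c]_{ab},[t]_{ab})$ in $\mathscr{A}_{\alpha,r}$.

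The delicate step is the $z$-translation bookkeeping that produces $[c']=[C]$: the non-commutativity $[g_1,g_2]=g_3^r$ among the generators of $G(a,b,c,r)$ is precisely what creates the correction $\tfrac{b\wedge a}{2}\bpm k_{11}k_{12}\\ k_{21}k_{22}\epm$ entering formula (\ref{ABC}) for $C$, which is why the $\GL(2,\Z)$-action descends only to the quotient $\mathscr{T}_{\alpha,r}=\mathcal{T}_{\alpha,r}/\Z^2$ and not to $\mathcal{T}_{\alpha,r}$ itself, and why the matching of $z$-data in $(1)\Rightarrow(2)$ can only be asserted modulo $\tfrac{b'\wedge a'}{r}\Z^2$.
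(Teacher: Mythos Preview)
Your proof is correct and follows essentially the same approach as the paper's: both directions use Remark~\ref{RelationsSubgroups} to recover the intrinsic subgroups $G\cap\Aff^1_1(U)$ and $G\cap\T(U)$, pass to the rank-$2$ quotient to extract $K\in\GL(2,\Z)$, and then match the $z$-data modulo $\langle g_3\rangle$ via Lemma~\ref{explic-C-formula} and equivalences~(\ref{s(c)}),~(\ref{s(t)}). The only cosmetic differences are that you phrase the quotient via an explicit projection $(u,\lambda)\colon\Aff^1_1(U)\to\R\oplus\C$ and read off $\det K$ from determinants, whereas the paper works abstractly and extracts $g_3(a',b',r)=g_3(a,b,r)^{\det K}$ from the commutator $[G_1,G_2]$; and for the $t$-coordinate you kill the $g_1',g_2'$-exponents using $\Q$-independence of $a_1',a_2'$ alone, while the paper uses both $ka=0$ and $kb=0$ together with linear independence of $(a,b)$.
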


\begin{proof} $(1)\Rightarrow(2)$:
Suppose that $(a,b,c,t)\approx (a',b',c',t')$, in other words we have $G(\alpha,a,b,c,r,t)=G(\alpha,a',b',c',r,t')$, in particular 
$$
g_0(\alpha,t'),\ g_3(a',b',r),\  g_i(a',b',c')  \  (1\leq i\leq 2)$$
is a  system of generators of $G(\alpha,a,b,c,r,t)$.

Put  $G=G(\alpha,a,b,c,r,t)=G(\alpha,a',b',c',r,t')$. By Remark \ref{RelationsSubgroups} we know that 
$$(g_1(a,b,c),g_2(a,b,c),g_3(a,b,r)),\ (g_1(a',b',c'),g_2(a',b',c'),g_3(a',b',r))
$$
are both system of generators of the group 
$$G\cap \Aff^1_1(U)=G(a,b,c,r)=G(a',b',c',r)$$
 and $g_3(a,b,r)$, $g_3(a',b',r)$ generate the same normal subgroup 
$$Z\edf Z(a,b,r)=Z(a',b',r)=\langle g_3(a,b,r)\rangle=\langle g_3(a',b',r)\rangle.$$
The two pairs   
 $$
([(g_1(a,b,c)]_Z,[(g_2(a,b,c)]_Z), \ ([(g_1(a',b',c')]_Z,[(g_2(a',b',c')]_Z)
 $$ 
of  congruence classes with respect to $Z$  are both systems of  	generators   of the quotient group  
$(G\cap \Aff^1_1(U))/Z\simeq\Z^2$. It follows that there exists $K=(k_{ij})\in\GL(2,\Z)$ such that
$$
[g_1(a',b',c')]_Z=[g_1(a,b,c)]^{k_{11}}_Z[g_2(a,b,c)]^{k_{12}}_Z,$$ 
$$[g_2(a',b',c')]_Z=[g_1(a,b,c)]^{k_{21}}_Z[g_2(a,b,c)]^{k_{22}}_Z.
$$
Therefore
\begin{equation}\label{gi(a',b',c')}
\begin{split}	
g_1(a',b',c')=&g_1(a,b,c)^{k_{11}}g_2(a,b,c)^{k_{12}}g_3(a,b,r)^{s_1},\\
 g_2(a',b',c')=&g_1(a,b,c)^{k_{21}}g_2(a,b,c)^{k_{22}}g_3(a,b,r)^{s_2}
 \end{split}
 \end{equation}
with $s_1$, $s_2\in\Z$.   Put
\begin{equation}\label{introd-G1G2}
G_1\edf g_1(a,b,c)^{k_{11}}g_2(a,b,c)^{k_{12}},\ G_2\edf g_1(a,b,c)^{k_{21}}g_2(a,b,c)^{k_{22}}. 
\end{equation}
By (\ref{gi(a',b',c')}) it follows that
$$
[G_1,G_2]=[g_1(a',b',c'),g_2(a',b',c')]=g_3(a',b',r)^r.
$$
On the other hand, taking into account that $[g_1(a,b,c),g_2(a,b,c)]=g_3(a,b,r)^r$ and the definitions of $G_1$, $G_2$, direct computations give $[G_1,G_2]=g_3(a,b,r)^{\det(K)r}$.
Therefore
\begin{equation}\label{g3'g3}
G_3\edf g_3(a',b',r)=g_3(a,b,r)^{\det(K)}.
\end{equation}

For comparing $g_0(\alpha,t)$ with $g_0(\alpha,t')$ recall that, by Remark \ref{RelationsSubgroups},  their classes with respect to the subgroup $G\cap \Aff^1_1(U)=G(a,b,c,r)=G(a',b',c',r)$ coincide, so
$$
g_0(\alpha,t')=g_0(\alpha,t)g_1(a,b,c)^{k_1}g_2(a,b,c)^{k_2}g_3(a,b,r)^{s_0}
$$
with $k_1$, $k_2$, $s_0\in\Z$. Putting $k=(k_1,k_2)$, $ka=k_1a_1+k_2a_2$, $kb=k_1b_1+k_2b_2$, direct computations give
\begin{align*}
&g_0(\alpha,t)g_1(a,b,c)^{k_1}g_2(a,b,c)^{k_2}g_3(a,b,r)^{s_0}
\bpm
w\\
z	
\epm=\\
&\begin{pmatrix}
\alpha(w+ka)\\
z+(kb) w+\frac{k_{1}(k_{1}-1)}{2}a_1b_1+k_{1}k_{2}b_1a_2+\frac{k_{2}(k_{2}-1)}{2}a_2b_2+kc+t+\frac{s_0}{r}b\wedge a	
\end{pmatrix}.
\end{align*}

Taking into account the explicit formula for $g_0(\alpha,t')$, we obtain $ka=kb=0$, so $k=0$. Therefore we have
\begin{equation}\label{g0->g0'}
g_0(\alpha,t')=	g_0(\alpha,t)g_3(a,b,r)^{s_0}.
\end{equation}
By Lemma \ref{explic-C-formula} we see that  formulae (\ref{introd-G1G2}), (\ref{g3'g3}) give
$$
G_i=g_i(A,B,C) \hbox{ for }1\leq i\leq 2,\ G_3=g_3(A,B,r)
$$
where $(A,B,C)=K\cdot (a,b,c)$. On the other hand (\ref{gi(a',b',c')}) can be written as
\begin{equation}\label{Gi->gi(a',b',c')}
\begin{split}	
g_1(a',b',c')=&g_1(A,B,C)g_3(A,B,r)^{\det(K)s_1},\\
 g_2(a',b',c')=&g_2(A,B,C) g_3(A,B,r)^{\det(K) s_2},
 \end{split}
 \end{equation}
which, by Remark \ref{equiv-for-c'-t'}, gives $c'=C+  \frac{B\wedge A}{r} \det(K) s\in [C]_{AB}$. Finally, formula (\ref{g0->g0'}) shows that $t'=t+ \frac{B\wedge A}{r} \det(K) s_0\in [t]_{AB}$.

Therefore,  $a'=A=Ka$, $b'=B=Kb$ and $[c']_{AB}=[C]_{AB}$, $[t']_{AB}=[t]_{AB}$ which, taking into account that $K\cdot (a,b,c,t)=(A,B,C,t)$,  proves the claim. 
\vspace{2mm}\\
$(2)\Rightarrow (1)$:\\
It suffices to note that, if $(a',b',[c']_{a'b'},[t']_{a'b'})=K\cdot (a,b,[c]_{ab},[t]_{ab})$ with $K\in\GL(2,\Z)$, then $G(\alpha,a',b',c',r,t')\subset G(\alpha,a,b,c,r,t)$. 	
\end{proof}

Therefore, by Theorem \ref{Quotient-approx}, the quotient set ${\cal A}_{\alpha,r}/\approx$ (which parameterises  the subgroups $G\subset \Aff(U)$ defining  type II Inoue surfaces) is identified with the orbit space $ {\mathscr{A}_{\alpha,r}}/{\GL(2,\Z)}$. This orbit space comes with a natural surjective map
$$
{\mathscr{A}_{\alpha,r}}/{\GL(2,\Z)}\to {\cal P}_\alpha/\GL(2,\Z)\to   {\cal N}_\alpha/\GL(2,\Z)
$$
onto the set of $\GL(2,\Z)$-similarity classes of matrices $N\in{\cal N}_\alpha$ (see section \ref{Intro-typeII}).

\subsubsection{A classification theorem for type II Inoue surfaces}

Let $(\alpha,r)\in]1,+\infty[\times \N^*$ with $\alpha$ $S^+$-admissible. We now come back to our initial problem, which is:  
describe explicitly the quotient of ${\cal A}_{\alpha,r}\edf{\cal T}_{\alpha,r}	\times\C$ by the equivalence relation
$$
(a,b,c,t)\sim (a',b',c',t') \hbox{ if $G(\alpha,a,b,c,r,t)$, $G(\alpha,a',b',c',r,t')$ are conj. in $\Aff(U)$}.$$

 Elementary computations give the following:
\begin{lm}\label{tau(gi)tau(-1)}
Let $(a,b,c,t)$, $(a',b',c',t')\in {\cal A}_{\alpha,r}$ and put 
$$
\left\{
\begin{array}{lll}
g_0=g_0(\alpha,t),& g_i=g_i(a,b,c),&g_3=g_3(a,b,r),\\
g_0'=g_0(\alpha,t'),& g_i'=g_i(a',b',c'),& g_3'=g_3(a',b',r).
\end{array}\right.$$
 Let $k=(k_1,k_2)\in\Z^2$, $\bpm s_0\\s_1\\s_2\epm \in\Z^3$ and $\tau\in \Aff^1_1(U)$,   
$
\begin{pmatrix}
w\\
z 	
\end{pmatrix}\textmap{\tau} \begin{pmatrix}
1  &0\\
\lambda	&1
\end{pmatrix}\begin{pmatrix}
w\\
z 	
\end{pmatrix}+\begin{pmatrix}
u\\ \zeta	
\end{pmatrix}$. 
The following conditions are equivalent:\vspace{2mm}
\begin{enumerate}[(i)]
	\item 
$
\tau g'_0\tau^{-1}= g_0g_1^{k_1}g_2^{k_2}g_3^{s_0},\ \tau\circ  g_i'\circ \tau^{-1}=g_ig_3^{s_i} \hbox{ for }1\leq i\leq 2,\ \tau\circ  g_3'\circ \tau^{-1}=g_3$. \vspace{3mm}
\item We have: 
$$
\left\{\begin{array}{ccl}
\lambda&=&\frac{kb}{\alpha-1}.\\
u&=&\frac{\alpha (k a)}{1-\alpha}.\\
a'&=&a.\\
b'&=&b.\\
c'&=&c-\frac{\alpha(ka)}{\alpha-1}b-\frac{ kb }{\alpha-1}a+\frac{b\wedge a}{r}s,\hbox{ where }s\edf \bpm s_1\\s_2\epm.\\
t'&=&t+\frac{\alpha(ka)(kb)}{1-\alpha}	+ kc +  \frac{k_{1}(k_{1}-1)a_1b_1+k_{2}(k_{2}-1)a_2b_2}{2}+k_{1}k_{2}b_1a_2 +  \frac{s_0}{r}b\wedge a.
\end{array}
 \right.
$$
\end{enumerate}

\end{lm}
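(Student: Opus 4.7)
The proof reduces to a direct and mechanical comparison of affine transformations, so the strategy is to unwrap both sides of each of the four conjugation equations in (i), write them as explicit affine maps, and then read off the identities (ii) by matching linear and translation parts. The lemma is biconditional, but once each conjugation $\tau\circ g'_\bullet\circ\tau^{-1}$ is written in normal form, both directions follow from the same coefficient-by-coefficient identifications.

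First I would write $\tau^{-1}(w,z)=(w-u,\,z-\lambda w+\lambda u-\zeta)$ and, for each generator, compute $\tau\circ g'_\bullet\circ\tau^{-1}$ explicitly. The computation with $g_3'$ is immediate: conjugation by the lower-unipotent $\tau$ sends the translation $(0,\tfrac{b'\wedge a'}{r})$ to itself, so $\tau g_3'\tau^{-1}=g_3$ is equivalent to $b'\wedge a'=b\wedge a$. Next, for $i=1,2$, the linear part of $g_i'$ is $\bigl(\begin{smallmatrix}1&0\\ b_i'&1\end{smallmatrix}\bigr)$, and conjugating by the lower-unipotent $\tau$ leaves the linear part unchanged. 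Comparing $\tau g_i'\tau^{-1}$ with $g_ig_3^{s_i}$ then forces $a_i'=a_i$, $b_i'=b_i$ (which automatically produces the $g_3$-equation), and the translation parts yield
\[
c_i'=c_i-\lambda a_i+b_i u+\tfrac{s_i(b\wedge a)}{r}.
\]

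The key step is the $g_0'$ equation. A direct calculation gives
\[
\tau\circ g_0'\circ\tau^{-1}(w,z)=\bigl(\alpha w+u(1-\alpha),\ z+\lambda(\alpha-1)w+\lambda u(1-\alpha)+t'\bigr),
\]
while an iterated application of the formula for $g_i^{k_i}$ (easy induction) produces the normal form
\[
g_0g_1^{k_1}g_2^{k_2}g_3^{s_0}(w,z)=\Bigl(\alpha w+\alpha(ka),\ z+(kb)w+kc+k_1k_2b_1a_2+\tfrac{k_1(k_1-1)a_1b_1+k_2(k_2-1)a_2b_2}{2}+\tfrac{s_0}{r}b\wedge a+t\Bigr).
\]
Matching the constant part of the first coordinate gives $u=\frac{\alpha(ka)}{1-\alpha}$; matching the coefficient of $w$ in the second coordinate gives $\lambda=\frac{kb}{\alpha-1}$; and matching the remaining constant term, after simplifying $\lambda u(1-\alpha)=\frac{\alpha(ka)(kb)}{\alpha-1}$, yields the stated formula for $t'$. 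Plugging $\lambda$ and $u$ back into the equations for $c_i'$ gives the vector identity for $c'$ in (ii).

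The converse implication requires no additional work: given the formulas in (ii), the same computations just performed show, in reverse, that $\tau g'_\bullet\tau^{-1}$ agrees with the prescribed element of $G$ on each generator. No obstacle is anticipated beyond bookkeeping; the only delicate point is the algebraic simplification $\lambda u(1-\alpha)=\frac{\alpha(ka)(kb)}{\alpha-1}$, which must be done carefully to recover the sign in $\frac{\alpha(ka)(kb)}{1-\alpha}$ appearing in the statement.
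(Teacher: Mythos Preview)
Your proposal is correct and matches the paper's approach: the paper introduces this lemma with the single sentence ``Elementary computations give the following,'' so your explicit coefficient-by-coefficient comparison of the conjugated generators with their prescribed normal forms is exactly what is intended. The computations you sketch (in particular the iterated formula for $g_1^{k_1}g_2^{k_2}$ and the simplification $\lambda u(1-\alpha)=\frac{\alpha(ka)(kb)}{\alpha-1}$) are all correct.
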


  \begin{lm}\label{k.c-k.t}  \begin{enumerate}
 \item  Putting  
\begin{align*}
 k\cdot c\edf & c-\frac{\alpha(ka)}{\alpha-1}b-\frac{ kb }{\alpha-1}a,\\	
 k \cdot_c t\edf & t+\frac{\alpha(ka)(kb)}{1-\alpha}	+ kc +  \frac{k_{1}(k_{1}-1)a_1b_1+k_{2}(k_{2}-1)a_2b_2}{2}+k_{1}k_{2}b_1a_2,
\end{align*}
we have the identities
\begin{align}
 k\cdot c= & c+ b\wedge a (I_2-N)^{-1}\bpm -k_2\\ k_1\epm, \label{k.c}\\	
 k\cdot_c t= & t +b\wedge a\bigg(k (I_2-N)^{-1}\bigg(\frac{1}{2}\bpm n_{11}n_{12}-k_2\\n_{21}n_{22}+k_1\epm+\frac{1}{r}\pi_{a,b,r}(c) \bigg)  +\frac{k_1k_2}{2}\bigg) \label{k.t}.
\end{align}

\item  Put  $\delta\edf  \det(I_2-N) =2-\theta$. If $k\in \delta\Z^2$, then 
\begin{align}
 k\ (I_2-N)^{-1} \bpm n_{11}n_{12}-k_2\\n_{21}n_{22}+k_1\epm + k_1k_2 &\in 2\Z,\label{in 2Z}\\	
 k (I_2-N)^{-1}\bigg(\frac{1}{2}\bpm n_{11}n_{12}-k_2\\n_{21}n_{22}+k_1\epm+\frac{1}{r}\Z^2 \bigg)  +\frac{k_1k_2}{2}&\subset \frac{1}{r}\Z. \label{in 1/r Z}
\end{align}

\end{enumerate}
\end{lm}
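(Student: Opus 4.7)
The plan is to prove identity (\ref{k.c}) first by decomposing the integer vector $\binom{-k_2}{k_1}\in\R^2$ in the eigenbasis $(a,b)$ of $N$. Since $Na=\alpha a$ and $Nb=\alpha^{-1}b$, one has $(I_2-N)^{-1}a=\frac{1}{1-\alpha}a$ and $(I_2-N)^{-1}b=\frac{\alpha}{\alpha-1}b$. A direct Cramer-rule computation gives
\[
\binom{-k_2}{k_1}=\frac{kb}{b\wedge a}\,a-\frac{ka}{b\wedge a}\,b,
\]
so applying $(I_2-N)^{-1}$ and multiplying by $b\wedge a$ yields exactly $-\frac{kb}{\alpha-1}a-\frac{\alpha(ka)}{\alpha-1}b$, which is $k\cdot c - c$ by definition. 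This proves (\ref{k.c}).

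For (\ref{k.t}), the key step is to substitute the defining formula (\ref{c->p-formula}) for $\pi_{a,b,r}(c)$, which gives
\[
\tfrac{1}{2}\binom{n_{11}n_{12}-k_2}{n_{21}n_{22}+k_1}+\tfrac{1}{r}\pi_{a,b,r}(c)=\tfrac{1}{2}\binom{-k_2}{k_1}+\tfrac{1}{b\wedge a}(I_2-N)\bigl(c-\tfrac{1}{2}\tbinom{a_1b_1}{a_2b_2}\bigr).
\]
Applying $(I_2-N)^{-1}$, then taking the dot product with $k$ and multiplying by $b\wedge a$, one sees that the right-hand side of (\ref{k.t}) equals
\[
\tfrac{b\wedge a}{2}\,k(I_2-N)^{-1}\tbinom{-k_2}{k_1}+kc-\tfrac{1}{2}(k_1a_1b_1+k_2a_2b_2)+\tfrac{b\wedge a}{2}k_1k_2.
\]
Contracting the formula from part (\ref{k.c}) with $k$ gives $\tfrac{b\wedge a}{2}k(I_2-N)^{-1}\binom{-k_2}{k_1}=-\tfrac{(ka)(kb)(1+\alpha)}{2(\alpha-1)}$, and subtracting the explicit definition of $k\cdot_c t-t$ reduces the identity to the purely algebraic statement $(ka)(kb)=k_1^2a_1b_1+k_2^2a_2b_2+k_1k_2(a_1b_2+a_2b_1)$, which is immediate.

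For part (2), the starting observation is that $\det(I_2-N)=(1-\alpha)(1-\alpha^{-1})=2-\theta=\delta$, so the adjugate $\delta(I_2-N)^{-1}$ has integer entries. Hence, whenever $k\in\delta\Z^2$, the row vector $m\coloneq k(I_2-N)^{-1}$ lies in $\Z^2$ and satisfies $m(I_2-N)=k$, which modulo $2$ gives $k_i\equiv m_1n_{1i}+m_2n_{2i}\pmod 2$ for $i=1,2$. Substituting these congruences into the expansion of $m_1(n_{11}n_{12}-k_2)+m_2(n_{21}n_{22}+k_1)+k_1k_2$ and simplifying reduces (\ref{in 2Z}) to showing $m_1m_2(n_{11}n_{22}+n_{12}n_{21})\equiv m_1m_2\pmod 2$, which follows at once from $\det(N)=n_{11}n_{22}-n_{12}n_{21}=1$. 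Finally, (\ref{in 1/r Z}) follows by splitting the argument into two pieces: the part involving $\frac{1}{2}\binom{n_{11}n_{12}-k_2}{n_{21}n_{22}+k_1}$ contributes $\frac{1}{2}\cdot(\text{even integer})\in\Z\subset\frac{1}{r}\Z$ by (\ref{in 2Z}), while $k(I_2-N)^{-1}\cdot\frac{1}{r}\Z^2\subset\frac{1}{r}\Z$ because $m$ is integral.

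The only non-routine step I foresee is the parity argument in part (2): keeping the $\mathrm{mod}\,2$ bookkeeping straight, and isolating precisely where $\det(N)=1$ enters. The $\alpha$-eigenvector computation for (\ref{k.c}) and the substitution for (\ref{k.t}) are mechanical but require careful attention to the sign convention for $b\wedge a$. Everything else is linear algebra in $\R^2$ together with the fact that $a$, $b$ diagonalize $N$ and therefore also $(I_2-N)^{-1}$.
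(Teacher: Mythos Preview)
Your proof is correct and follows essentially the same route as the paper's: the eigenbasis decomposition for (\ref{k.c}), the substitution of (\ref{c->p-formula}) and the identity $(b\wedge a)\,k(I_2-N)^{-1}\binom{-k_2}{k_1}=-\frac{(1+\alpha)(ka)(kb)}{\alpha-1}$ for (\ref{k.t}), and the integrality of $m=k(I_2-N)^{-1}$ together with a parity computation for part (2).

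One small slip to fix in part (2): from $m(I_2-N)=k$ one gets $k=m-mN$, hence modulo $2$
\[
k_i\equiv m_i+m_1n_{1i}+m_2n_{2i}\pmod 2,
\]
not $k_i\equiv m_1n_{1i}+m_2n_{2i}$ as you wrote. With the correct congruence the substitution indeed reduces the expression $m_1(n_{11}n_{12}-k_2)+m_2(n_{21}n_{22}+k_1)+k_1k_2$ to $m_1m_2\bigl(1+n_{11}n_{22}+n_{12}n_{21}\bigr)\pmod 2$, and then $\det(N)=1$ gives $n_{11}n_{22}+n_{12}n_{21}\equiv 1$, finishing the argument exactly as you stated.
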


\begin{proof} (1) For the first formula it suffices to apply $I_2-N$ to $k\cdot c$ taking into account that $(I_2-N)a=(1-\alpha)a$, $(I_2-N)b=(1-\alpha^{-1})b$.

For the second formula in (1), we have
$$\frac{\alpha(ka)(kb)}{1-\alpha}	+ kc +  \frac{k_{1}(k_{1}-1)a_1b_1+k_{2}(k_{2}-1)a_2b_2}{2}+k_{1}k_{2}b_1a_2=
$$
$$=\frac{k_{1}(k_{1}-1)}{2}a_1b_1+k_{1}k_{2}b_1a_2+\frac{k_{2}(k_{2}-1)}{2}a_2b_2+kc-\frac{\alpha}{\alpha-1}(ka)(kb)=
$$
$$
=\frac{1}{2}(k_1^2a_1b_1+ k_2^2a_2b_2+k_1k_2(a_1b_2+a_2b_1)-k_1k_2(a_1b_2+a_2b_1)-k_1 a_1b_1-k_2a_2b_2)+
$$
$$
+\frac{1}{2} 2k_{1}k_{2}b_1a_2+kc-\frac{\alpha}{\alpha-1}(ka)(kb)=
$$
$$
=\frac{1}{2}\big((ka)(kb)+k_1k_2 b\wedge a\big)+ kc -\frac{\alpha}{\alpha-1}(ka)(kb)-k_1 a_1b_1-k_2a_2b_2=
$$
$$
=kc+\frac{1}{2}(k_1k_2 b\wedge a -k_1 a_1b_1-k_2a_2b_2) -\frac{\alpha+1}{2(\alpha-1)}(ka)(kb)=
$$
$$
k\bigg( c-\frac{1}{2} \bpm a_1b_1\\ a_2 b_2\epm\bigg) + \frac{1}{2}k_1k_2 b\wedge a -\frac{\alpha+1}{2(\alpha-1)}(ka)(kb).
$$

On the other hand,  applying Lemma \ref{Lemma-Cramer} to $(I_2-N)^{-1}$ and noting that
$$
(I_2-N)^{-1}(a)=\frac{1}{1-\alpha}a,\ (I_2-N)^{-1}(b)=\frac{\alpha}{\alpha-1}b,
$$
we obtain:
$$ -\frac{\alpha+1}{\alpha-1}(ka) (kb)=(b\wedge a) \bigg(  k (I_2-N)^{-1}\bpm -k_2\\k_1 \epm\bigg).$$
Therefore
$$\frac{\alpha(ka)(kb)}{1-\alpha}	+ kc +  \frac{k_{1}(k_{1}-1)a_1b_1+k_{2}(k_{2}-1)a_2b_2}{2}+k_{1}k_{2}b_1a_2=
$$
$$
=k\bigg( c-\frac{1}{2} \bpm a_1b_1\\ a_2 b_2\epm\bigg) + \frac{k_1k_2}{2} b\wedge a +\frac{b\wedge a}{2}  \bigg(  k (I_2-N)^{-1}\bpm -k_2\\k_1 \epm\bigg). $$

It suffices to recall that:
$$
 c-\frac{1}{2} \bpm a_1b_1\\ a_2 b_2\epm=(I_2-N)^{-1}\bigg (\frac{b\wedge a}{2}\bpm n_{11}n_{12}\\ n_{21}n_{22}\epm+ \frac{b\wedge a}{r}\pi_{a,b,r}(c)\bigg)
 $$
 by the definition of $p(a,b,c,r)=\pi_{a,b,r}(c)$.
\\ \\
(2) Note first that (\ref{in 1/r Z}) follows easily from (\ref{in 2Z}) by noting that $\delta (I_2-N)^{-1}$ is a matrix with integer entries. Therefore it's enough to prove (\ref{in 2Z}). Let $l\in \Z^2$ such that $k=\delta l$. The claim follows by elementary computations using the formulae:
$$
k (I_2-N)^{-1}=\delta l(I_2-N)^{-1}=l\, \trcof(I_2-N)= l\bpm 1-n_{22} & n_{12}\\ n_{21} & 1-n_{11}\epm, 
$$
$$
 l_1l_2 (n_{22}-n_{11}+\delta)=l_1l_2 (n_{22}-n_{11}+2-(n_{11}+n_{22}))=2l_1l_2(1-n_{11})\in 2\Z,
$$
$$
l_i^2\equiv l_i \  {\rm mod}\  2, \ \delta\equiv\theta\equiv n_{11}+n_{22} \  {\rm mod}\  2,
$$
$$
 n_{12}n_{22}(1+n_{21}-n_{11}),\ n_{21}n_{11}(-1+n_{12}-n_{22})\in 2\Z. 
$$
The latter formula follows using $\det(N)=1$.

\end{proof}

\begin{lm}\label{Lemma-Cramer}
Let $M\in M_2(\C)$ and $a=\bpm a_1\\ a_2\epm $, $b=\bpm b_1\\ b_2\epm$ be linearly independent in $\C^2$ such that $Ma=\alpha a$, $Mb=\beta b$ and $x=(x_1,x_2)\in\C^2$. Then
$$
(\beta-\alpha)( x a) (x b)=(a\wedge b)  x M \bpm -x_2\\x_1\epm. 
$$
	
\end{lm}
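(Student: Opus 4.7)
\textbf{Proof proposal for Lemma \ref{Lemma-Cramer}.} The strategy is to diagonalise $M$ using the eigenbasis $(a,b)$ and then read off the identity by a direct block computation. Writing $x = (x_1, x_2)$ as a row vector and letting $J = \bpm 0 & -1 \\ 1 & 0 \epm$, note that $\bpm -x_2 \\ x_1 \epm = J \bpm x_1 \\ x_2 \epm$, so the right-hand side is $(a\wedge b)\, xMJ\trp{x}$, where $\trp{x} = \bpm x_1\\ x_2\epm$.

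Set $P \edf \bpm a_1 & b_1 \\ a_2 & b_2\epm$. Since $a$, $b$ are linearly independent, $P$ is invertible with $\det(P) = a\wedge b$, and the eigenvalue conditions $Ma = \alpha a$, $Mb = \beta b$ give $M = P D P^{-1}$ where $D = \mathrm{diag}(\alpha,\beta)$. One computes immediately $xP = (xa, \, xb)$ as a row vector, and using $P^{-1} = \frac{1}{a\wedge b}\bpm b_2 & -b_1 \\ -a_2 & a_1\epm$ one checks
$$
P^{-1} J \trp{x} \;=\; \frac{1}{a\wedge b}\bpm -xb \\ xa \epm.
$$

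Substituting these into $xMJ\trp{x} = (xP)\, D\,(P^{-1}J\trp{x})$ yields
$$
xMJ\trp{x} \;=\; (xa,\, xb)\cdot \frac{1}{a\wedge b}\bpm -\alpha\, xb \\ \beta\, xa\epm \;=\; \frac{(\beta-\alpha)(xa)(xb)}{a\wedge b},
$$
and multiplication by $a\wedge b$ gives the claimed identity.

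There is no genuine obstacle here: once one recognises that $a,b$ must be used as a diagonalising basis of $M$ (so that $P$, $P^{-1}$ bring the matrix into diagonal form), the remaining two lines are a mechanical $2\times 2$ computation. The only thing to be mildly careful about is the consistent placement of $x$ as a row vector on the left and $\trp{x}$ as a column vector on the right, so that the rotation by $J$ on the column side corresponds to turning $(xa,xb)$ into $(-xb, xa)$ after passing through $P$ and $P^{-1}$.
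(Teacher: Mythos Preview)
Your proof is correct and is essentially the same approach as the paper's: the paper applies Cramer's rule to the system $a_1x_1+a_2x_2=xa$, $b_1x_1+b_2x_2=xb$ to write $\bpm -x_2\\x_1\epm=\frac{1}{a\wedge b}\big((xa)b-(xb)a\big)$, then applies $M$ and pairs with $x$ --- which is exactly your computation $xP\,D\,P^{-1}J\trp{x}$ written without the matrix notation. The only difference is packaging: you name the change of basis $P$ explicitly, while the paper invokes Cramer's rule for the same inversion.
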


\begin{proof}
We apply the Cramer rule to the system
$$
\left\{\begin{array}{ccc}
a_1 x_1+a_2 x_2&=& xa\\
b_1 x_1+b_2 x_2&=& x b 	
\end{array}\right..
$$
in order to express $x_1$, $x_2$ in terms of $xa$ and $xb$.
\end{proof}

\begin{pr} \label{Z2-action} Let $(a,b)\in {\cal P}_{\alpha}$. 
\begin{enumerate}
\item 	The formula $
(k,c)  \mapsto k\cdot c$ defines an action of $\Z^2$ on ${\cal C}_{a,b,r}$ satisfying the identity
\begin{equation}\label{pi(kc)}
\pi_{a,b,r}( k\cdot c)=\pi_{a,b,r}(c)+r \bpm -k_2\\ k_1\epm.
\end{equation}
\item The formula $
 \bigg (k,\bpm  [c]_{ab}\\ [t]_{ab}\epm\bigg )  \mapsto \bpm [k\cdot  c ]_{ab},	\\
  \big[k \cdot_c t \big ]_{ab}\epm$
defines an action  of $\Z^2$ on the product $\mathscr{C}_{a,b,r}\times \big(\C/\frac{b\wedge a}{r}\Z\big)$. This action descends to an action of the finite group $\Z^2/ d\Z^2=(\Z_d)^2$, where $d\edf |\det(I_2-N)|=\theta-2$.

\end{enumerate}

\end{pr}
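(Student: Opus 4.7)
My plan is to take Lemma \ref{k.c-k.t} as the computational engine and reduce everything to the identities it provides.

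For part (1), I would first verify that $k\cdot c$ stays in ${\cal C}_{a,b,r}$: by identity (\ref{k.c}),
$$k\cdot c-c = (b\wedge a)(I_2-N)^{-1}\bpm -k_2\\ k_1\epm = \frac{b\wedge a}{r}(I_2-N)^{-1}\bpm -rk_2\\ rk_1\epm,$$
which lies in the torsor group $\frac{b\wedge a}{r}(I_2-N)^{-1}\Z^2$ of Remark \ref{c->p-remark}(1). The formula for $\pi_{a,b,r}(k\cdot c)$ is then immediate from Remark \ref{c->p-remark}(2), and the action axioms follow from the visible $\Z$-linearity of $k\mapsto k\cdot c-c$.

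For part (2) I would split the verification into three steps. First, well-definedness on the quotient: a change $c\mapsto c+\frac{b\wedge a}{r}v$ with $v\in\Z^2$ shifts $k\cdot c$ by $\frac{b\wedge a}{r}v$ (linearity in $c$) and shifts $k\cdot_c t$ by $\frac{b\wedge a}{r}(kv)\in\frac{b\wedge a}{r}\Z$; a change $t\mapsto t+\frac{b\wedge a}{r}m$ translates $k\cdot_c t$ by the same amount. Second, the action axiom for the pair of classes: the cleanest route is to realise the action by conjugation using Lemma \ref{tau(gi)tau(-1)} with $s_0=s_1=s_2=0$, $a'=a$, $b'=b$; the element $\tau_k\in\Aff^1_1(U)$ determined by $\lambda=kb/(\alpha-1)$, $u=\alpha(ka)/(1-\alpha)$ conjugates the generator system of $(a,b,k\cdot c,k\cdot_c t)$ to that of $(a,b,c,t)$ twisted by $g_1^{k_1}g_2^{k_2}$, so the composition $\tau_{k'}\tau_k$ realises the $(k+k')$-action up to elements of the central subgroup $\langle g_3\rangle$, whose effect on $t$ is a shift in $\frac{b\wedge a}{r}\Z$. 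Third, descent to $(\Z_d)^2$: for $k=dl\in d\Z^2$, the identity $d(I_2-N)^{-1}=\trcof(I_2-N)\in M_2(\Z)$ combined with (\ref{k.c}) yields $k\cdot c-c\in(b\wedge a)\Z^2\subset\frac{b\wedge a}{r}\Z^2$, so $[k\cdot c]_{ab}=[c]_{ab}$; for the $t$-component, (\ref{k.t}) together with the inclusion (\ref{in 1/r Z}) of Lemma \ref{k.c-k.t}(2), applied with $\frac{1}{r}\pi_{a,b,r}(c)\in\frac{1}{r}\Z^2$, gives $k\cdot_c t-t\in\frac{b\wedge a}{r}\Z$, and hence $[k\cdot_c t]_{ab}=[t]_{ab}$.

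The hard part will be the action axiom for the $t$-coordinate. Writing $\phi(k,c)\edf k\cdot_c t-t$, the identity to verify modulo $\frac{b\wedge a}{r}\Z$ is the cocycle relation $\phi(k+k',c)\equiv \phi(k',c)+\phi(k,k'\cdot c)$; a direct expansion shows that the discrepancy between the two sides is of the form $k_1 k'_2(b\wedge a)\in(b\wedge a)\Z\subset\frac{b\wedge a}{r}\Z$, and the cancellation relies on $(1-\alpha)(1-\alpha^{-1})=2-\theta$ together with the symmetric pairing $(ka)(k'b)+(k'a)(kb)$. The conjugation detour via Lemma \ref{tau(gi)tau(-1)} reformulates the same statement as the associativity of composition in $\Aff(U)$, the non-trivial discrepancy being explained by the commutator $[g_1,g_2]=g_3^r$.
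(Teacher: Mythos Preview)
Your proposal is correct and follows essentially the same route as the paper: part (1) is read off from identity (\ref{k.c}) together with Remark \ref{c->p-remark}, and for part (2) the paper checks well-definedness, then computes directly that $(k+l)\cdot_c t$ and $l\cdot_{k\cdot c}(k\cdot_c t)$ differ by $k_2l_1(b\wedge a)\in\frac{b\wedge a}{r}\Z$ (your $k_1k_2'(b\wedge a)$ with the roles of the two group elements swapped), and finally invokes (\ref{k.c}), (\ref{k.t}), (\ref{in 1/r Z}) exactly as you do for the descent to $(\Z_d)^2$. Your conjugation viewpoint via Lemma \ref{tau(gi)tau(-1)} is a pleasant conceptual gloss on why the discrepancy is accounted for by $[g_1,g_2]=g_3^r$, but the paper sticks to the direct computation.
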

\begin{proof}
(1) This follows from formula (\ref{k.c}).%
\\  \
(2) Note first that the pair $([k\cdot  c ]_{ab},	 
  \big[k \cdot_c t \big ]_{ab})$ depends only on the classes $[c]_{ab}$, $ [t]_{ab}$ and $k$, so the right hand term is well defined.  To prove that the given formula defines an action, put:
$$
 \cg\edf k\cdot c,\ \tg\edf  k\cdot_ct,\ \Cg\edf l\cdot \cg,\  \Tg \edf  l\cdot_\cg \tg,\ \Cg'\edf (k+l) \cdot c,\ \Tg' \edf  (k+l)\cdot_c  t.
$$
By (1) we have  $\Cg'=\Cg$, and, using the definition of $k\cdot_c t$, elementary computations give
\begin{equation}\label{Tg'}
\Tg'=\Tg + k_2l_1 b\wedge a\in \Tg+  \frac{b\wedge a}{r}\Z,
\end{equation}
which proves the claim.	For the second claim in (2), note that,  by formulae (\ref{k.c}), (\ref{k.t}),  (\ref{in 1/r Z}) stated in Lemma \ref{k.c-k.t},    we have 
$$k\cdot c-c\in   (b\wedge a) \Z^2\subset\frac{b\wedge a}{r}\Z^2 ,\   k\cdot_c t-t\in \frac{b\wedge a}{r}\Z$$
  for any $k\in d \Z^2$, so $d \Z^2$ acts trivially on $\mathscr{C}_{a,b,r}\times \big(\C/\frac{b\wedge a}{r}\Z\big)$.
\end{proof}
For a triple $(a,b,c)\in {\cal T}_{\alpha,r}$ and $t\in\C$ we will also use the notation
$$
k\cdot(a,b,c,t)=(a,b,\cg,\tg) \hbox{ where }(\cg,\tg)\edf k\cdot (c,t).
$$
Formula (\ref{Tg'}) shows that the formulae 
$$(k, (c , t ))\mapsto  k\cdot (c, t),\ (k, (a,b,c , t ))=k\cdot(a,b,c,t)$$
 {\it do  not} define   $\Z^2$-actions on the respective spaces. However, Proposition \ref{Z2-action} shows that the formulae
 $$
 (k, ([c]_{ab} , [t]_{ab} ))\mapsto k\cdot  ([c]_{ab} , [t]_{ab} )\edf   ([\cg]_{ab},[\tg]_{ab}),$$
 $$ (k, (a,b,[c]_{ab} , [t]_{ab} ))\mapsto k\cdot(a,b,[c]_{ab} , [t]_{ab} )\edf (a,b,[\cg]_{ab},[\tg]_{ab})
 $$
 define $\Z^2$-actions which descend to $\Z_d^2$-actions on the spaces $\mathscr{C}_{a,b,r}\times \big(\C/\frac{b\wedge a}{r}\Z\big)$, $\mathscr{A}_{\alpha,r}$ respectively.

On the latter space $\mathscr{A}_{\alpha,r}$ we have now defined a  $\GL(2,\Z)$-action and a $\Z^2$-action (which induces a $\Z_d^2$-action). These actions {\it do not commute}. We have
\begin{re}\label{K(k)versus k'(K)}
Let $(a,b,c)\in {\cal T}_{\alpha,r}$, $t\in \C$, $k=(k_1,k_2)\in\Z^2$, $K=(k_{ij})\in\GL(2,\Z)$. 	Then the 4-tuples $K\cdot(k\cdot(a,b,c,t))$, $(kK^{-1})\cdot (K\cdot (a,b,c,t))$ define the same elements in $\mathscr{A}_{\alpha,r}$. More precisely
\begin{enumerate}
\item 	We have the identity
$$
K\cdot (k\cdot c)=(k K^{-1})\cdot (K\cdot c).$$
\item Putting 
$$(\cg,\tg)\edf k\cdot (c,t), \ (A,B,C)=K\cdot (a,b,c),\ (A,B,\Cg)=K\cdot (a,b,\cg)
$$
we have
$$
([\Cg]_{AB},[\tg]_{AB})=(kK^{-1})\cdot ([C]_{AB},[t]_{AB}).
$$

\end{enumerate}
\end{re}
\begin{proof}
(1) Follows by elementary computations. 
\vspace{2mm}\\  
(2) Put $k'\edf k K^{-1}$.  Note that, by (1), the first component of $k'\cdot (C,t)$ is $\Cg$.
Define $\tg'\in\C$ by   $k'\cdot (C,t)=(\Cg,\tg')$. We have to prove that $\tg$, $\tg'$ are congruent mod $\frac{B\wedge A}{r}$. Elementary computations give:  
\begin{equation}
\begin{split}
\tg'-\tg=&\frac{b\wedge a}{2}\left(k'\bpm k_{11}k_{12}\\ k_{21}k_{22}\epm+ k'_1k'_2\det K-k_1k_2\right)\\
=& \frac{b\wedge a}{2}\big( (k_1'-k_1'^2)k_{11}k_{12}+ (k_2'-k_2'^2)k_{21}k_{22}-2 k'_2k'_1k_{12}k_{21}\big)\\  \in&  \frac{B\wedge A}{r} r \Z \subset \frac{B\wedge A}{r}   \Z  . 
\end{split} 	 
\end{equation}
\end{proof}
Proposition \ref {Z2-action} and  Remark \ref{K(k)versus k'(K)} show  that:
\begin{pr}  Let $\varphi:\GL(2,\Z)\to\Aut(\Z^2)$, $\psi:\GL(2,\Z)\to\Aut(\Z^2_d)$ be the group morphisms defined by
$
\varphi(K)(k)=kK^{-1}$, $\psi(K)([k])=[k]K^{-1}$.
The formula 
$$
(K,k)\cdot (a,b,[c]_{ab},[t]_{ab})\edf K\cdot(k\cdot(a,b,[c]_{ab},[t]_{ab})),
$$	
defines an action of the semi-direct product\footnote{In general, for groups $N$, $H$ and a group morphism $\varphi:H\to \Aut(N)$,  the semi-direct product $H\ltimes_\varphi N$ is   $H\times N$ endowed with the composition law $(h,n)*(h',n')\edf (hh',\varphi(h'^{-1})(n)n')$.} 
$
\GL(2,\Z)\ltimes_\varphi \Z^2
$
on the space $\mathscr{A}_{\alpha,r}$ which descends to an action of $\GL(2,\Z)\ltimes_\psi  \Z_d^2$. \end{pr}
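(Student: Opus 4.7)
The plan is to verify the three standard axioms of a group action: well-definedness, identity, and compatibility with the group law. The first axiom is essentially already in place: we have the $\GL(2,\Z)$-action on $\mathscr{A}_{\alpha,r}$ and the $\Z^2$-action on $\mathscr{A}_{\alpha,r}$ (which factors through $\Z_d^2$) from the preceding results. Moreover, a quick check shows that $\varphi(K)(k) \edf kK^{-1}$ is indeed a group morphism $\GL(2,\Z) \to \Aut(\Z^2)$: for $K$, $L \in \GL(2,\Z)$,
$$\varphi(KL)(k) = k(KL)^{-1} = (kL^{-1})K^{-1} = \varphi(K)(\varphi(L)(k)),$$
and analogously for $\psi$ on $\Z_d^2$ (the latter is well-defined because $K^{-1}$ has integer coefficients, hence preserves the sublattice $d\Z^2$). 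The identity element $(I_2, 0)$ clearly acts trivially.

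The heart of the matter is checking the compatibility identity
$$\bigl((K,k) * (K',k')\bigr) \cdot x = (K,k) \cdot \bigl((K',k') \cdot x\bigr)$$
for $x = (a,b,[c]_{ab},[t]_{ab}) \in \mathscr{A}_{\alpha,r}$. Here I plan to invoke Remark \ref{K(k)versus k'(K)} in the following rewritten form: applying the identity $K \cdot (k \cdot y) = (kK^{-1}) \cdot (K \cdot y)$ with $K = K'$ and $k$ replaced by $kK'$ gives the ``commutation formula''
$$k \cdot (K' \cdot x) = K' \cdot \bigl((kK') \cdot x\bigr).$$
Using this together with the fact that the $\Z^2$-action satisfies $k_1 \cdot (k_2 \cdot x) = (k_1 + k_2) \cdot x$ (Proposition \ref{Z2-action}(2)), I compute
$$(K,k) \cdot \bigl((K',k') \cdot x\bigr) = K \cdot \bigl(k \cdot \bigl(K' \cdot (k' \cdot x)\bigr)\bigr) = K \cdot \bigl(K' \cdot \bigl((kK') \cdot (k' \cdot x)\bigr)\bigr) = (KK') \cdot \bigl((kK' + k') \cdot x\bigr).$$
Since the semi-direct product composition law gives $(K,k)*(K',k') = (KK', \varphi(K'^{-1})(k) + k') = (KK', kK' + k')$, this matches $((K,k)*(K',k')) \cdot x$ exactly, as desired.

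Finally, to establish that the action descends to $\GL(2,\Z) \ltimes_\psi \Z_d^2$, I plan to identify the kernel of the projection $\GL(2,\Z) \ltimes_\varphi \Z^2 \twoheadrightarrow \GL(2,\Z) \ltimes_\psi \Z_d^2$, which is the normal subgroup $\{(I_2, k) \mid k \in d\Z^2\}$. Its normality follows from the fact that $d\Z^2$ is $\varphi$-stable (since $\GL(2,\Z)$ preserves every sublattice of the form $d\Z^2$). By Proposition \ref{Z2-action}(2), this subgroup acts trivially on $\mathscr{A}_{\alpha,r}$, so the action factors through the quotient. I expect the main (and only nontrivial) step to be the commutation identity above; once it is in place, every remaining verification is either a direct application of an earlier proposition or an immediate algebraic manipulation.
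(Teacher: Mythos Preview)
Your proof is correct and follows exactly the approach the paper intends: the paper states that the proposition follows directly from Proposition \ref{Z2-action} and Remark \ref{K(k)versus k'(K)}, and you have simply written out explicitly how these two ingredients combine to verify the action axiom and the descent to $\Z_d^2$. Your derivation of the commutation formula $k\cdot(K'\cdot x)=K'\cdot((kK')\cdot x)$ from Remark \ref{K(k)versus k'(K)} and the subsequent chain of equalities are precisely the computations the paper leaves to the reader.
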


Now note that the action  
$$
(\mu,\rho)\cdot (a,b,c,t)\edf (\mu a,\rho b,\mu\rho c, \mu\rho t)
$$
of the product $\R^*_+\times\R^*$ on ${\cal A}_{\alpha,r}$ induces an $\R^*_+\times\R^*$-action on $\mathscr{A}_{\alpha,r}$ which commutes with the $(\GL(2,\Z)\ltimes_\varphi \Z^2)$-action defined above. Therefore we obtain an action of the product group
$$
(\R^*_+\times\R^*)\times (\GL(2,\Z)\ltimes_\varphi \Z^2)
$$
on $\mathscr{A}_{\alpha,r}$  given explicitly by
$$
((\mu,\rho),(K,k))\cdot (a,b,[c]_{ab},[t]_{ab})\edf (K,k)\cdot (\mu a,\rho b,[\mu\rho c]_{ab},[\mu \rho t]_{ab}),
$$
which descends to an action of $(\R^*_+\times\R^*)\times (\GL(2,\Z)\ltimes_\psi \Z^2_d$.

Our main theorem is
\begin{thry}\label{main} Let $(a,b,c,t)$, $(a',b',c',t')\in {\cal A}_{\alpha,r}$. 
The groups $G(\alpha,a,b,c,r,t)$, $G(\alpha,a',b',c',r,t')$ are conjugate in $\Aff(U)$ if and only if 
\begin{equation}
\begin{split}
(a',b',[c']_{a'b'},[t']_{a'b'})\in & \big((\R^*_+\times\R^*)\times (\GL(2,\Z)\ltimes_\psi \Z_d^2)\big)\cdot (a,b,[c]_{ab},[t]_{ab}).	\\
=& \big((\R^*_+\times\R^*)\times (\GL(2,\Z)\ltimes_\varphi \Z^2)\big)\cdot (a,b,[c]_{ab},[t]_{ab}).
\end{split}	
\end{equation}
\end{thry}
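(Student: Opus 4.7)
I would prove both implications separately, assembling the conjugations from the actions already analysed in the preceding sections.

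For sufficiency I would show that each of the three factors of $(\R^*_+\times\R^*)\times(\GL(2,\Z)\ltimes_\varphi\Z^2)$ comes from an honest conjugation in $\Aff(U)$. The scaling $(\mu,\rho)$ is realised by $\delta_{\mu,\rho}(w,z)\edf(\mu w,\mu\rho z)\in\Aff(U)$: a direct check on the four generators shows that $\delta_{\mu,\rho}\,G(\alpha,a,b,c,r,t)\,\delta_{\mu,\rho}^{-1}=G(\alpha,\mu a,\rho b,\mu\rho c,r,\mu\rho t)$. The $\Z^2$-action is realised by the $\tau\in\Aff^1_1(U)$ prescribed by condition (ii) of Lemma \ref{tau(gi)tau(-1)} with $\lambda=kb/(\alpha-1)$ and $u=\alpha(ka)/(1-\alpha)$; the equivalence (i)$\Leftrightarrow$(ii) together with Lemma \ref{k.c-k.t} shows that $\tau G\tau^{-1}$ has parameters equivalent modulo the $\Z^3$-ambiguity to $k\cdot(a,b,c,t)$. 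The $\GL(2,\Z)$-factor preserves the group itself by Lemma \ref{explic-C-formula} and Theorem \ref{Quotient-approx}. Composing, the whole orbit lies in a single $\Aff(U)$-conjugacy class.

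For necessity, let $\theta\in\Aff(U)$ satisfy $\theta G\theta^{-1}=G'$. By Remark \ref{r=r'} we have $\alpha=\alpha'$ and $r=r'$. Write the linear part of $\theta$ as $\left(\begin{smallmatrix}\mu&0\\\lambda&\nu\end{smallmatrix}\right)$. Since $\T(U)$ is normal in $\Aff(U)$, conjugation by $\theta$ sends $G\cap\T(U)=\langle g_3(a,b,r)\rangle$ onto $G'\cap\T(U)=\langle g_3(a',b',r)\rangle$; computing $\theta g_3\theta^{-1}$ explicitly forces $\nu\in\R^*$ and $\nu(b\wedge a)=\pm(b'\wedge a')$. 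I then decompose $\theta=\tau\delta$ with $\delta(w,z)\edf(\mu w,\nu z)$ and $\tau\edf\theta\delta^{-1}$; the linear part of $\tau$ is $\left(\begin{smallmatrix}1&0\\\lambda/\mu&1\end{smallmatrix}\right)$, so $\tau\in\Aff^1_1(U)$. By the sufficiency analysis for the scaling, $G_1\edf\delta G\delta^{-1}$ has parameters $(\mu,\rho)\cdot(a,b,c,t)$ with $\rho=\nu/\mu\in\R^*$, and $\tau G_1\tau^{-1}=G'$. Because $\Aff^1_1(U)$ is also normal, conjugation by $\tau$ preserves the filtration $\langle g_3\rangle\subset G(\cdot)\subset G_1$, matching it to the analogous filtration of $G'$; the induced automorphism of the rank-two quotient $\simeq\Z^2$ is a matrix $K\in\GL(2,\Z)$, while the map on $G_1/G_1(\cdot)\simeq\langle\alpha\rangle$ is the identity (since $\tau\in\Aff^1_1(U)$ has trivial class in $\Aff_1(U)/\Aff^1_1(U)\simeq\R_{>0}$, identified as in Remark \ref{RelationsSubgroups}). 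Using Lemma \ref{explic-C-formula} to regenerate $G'$ by the $K$-combinations of $g_1',g_2'$, its parameters become $K\cdot(a',b',[c']_{a'b'},[t']_{a'b'})$, and the conjugation $\tau G_1\tau^{-1}=G'$ now fits the template of Lemma \ref{tau(gi)tau(-1)}(i) for some $k\in\Z^2$, $s\in\Z^3$. Invoking equivalence (i)$\Leftrightarrow$(ii) and Lemma \ref{k.c-k.t} yields that the parameters of $G'$ and $G_1$ differ by the $k$-action, which is the claimed orbit identity. The equality between the $\Z^2$- and $\Z_d^2$-orbit descriptions is Proposition \ref{Z2-action}(2).

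The main obstacle I anticipate is consistent sign-tracking: the choice $\nu\in\{\pm(b'\wedge a')/(b\wedge a)\}$, the sign $\det K\in\{\pm1\}$, and the $\pm$ appearing a priori in $\tau g_3\tau^{-1}=g_3(A',B',r)^{\pm 1}$ are not independent, yet Lemma \ref{tau(gi)tau(-1)} as stated requires the strict form $\tau g_3'\tau^{-1}=g_3$. One must verify that choosing $K$ with the correct determinant (as dictated by the analysis of $\theta g_3\theta^{-1}$) simultaneously normalises all these signs so that the regeneration replaces $g_3(a',b',r)$ by $g_3(A',B',r)$ rather than its inverse. This is also the structural reason the theorem is phrased with the semidirect product $\GL(2,\Z)\ltimes_\varphi\Z^2$ rather than a direct product: the $\Z^2$-translation produced by Lemma \ref{tau(gi)tau(-1)} must act before the $\GL(2,\Z)$-regeneration, and the intertwining rule $\varphi(K)(k)=kK^{-1}$ is exactly the identity established in Remark \ref{K(k)versus k'(K)}.
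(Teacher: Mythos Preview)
Your proposal is correct and follows essentially the same strategy as the paper. Both arguments decompose the conjugating element as $\theta=\tau\delta$ with $\delta$ a diagonal scaling and $\tau\in\Aff^1_1(U)$, then invoke Lemma~\ref{tau(gi)tau(-1)} to extract the $\Z^2$-shear and Lemma~\ref{explic-C-formula}/Theorem~\ref{Quotient-approx} for the $\GL(2,\Z)$-regeneration.

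The minor bookkeeping differences are: the paper obtains $\nu\in\R^*$ by comparing the $(2,1)$-entries of the linear parts in $\theta g_i'\theta^{-1}=g_1^{k_{i1}}g_2^{k_{i2}}g_3^{s_i}$ for $i=1,2$ (yielding $\nu\mu^{-1}b_i'\in\R$), whereas you deduce it from $\theta g_3\theta^{-1}$; both are valid. More substantively, where you argue via the filtration $\langle g_3\rangle\subset G(a,b,c,r)\subset G$ to extract $K$ and then regenerate the target group, the paper instead packages this step as a separate Lemma (\ref{SetOfCanonical}) classifying all generator systems $(\gamma_0,\gamma_1,\gamma_2,\gamma_3)$ of $G$ satisfying the conditions (C1)--(C3) you implicitly use. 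That lemma makes your anticipated sign obstacle disappear mechanically: the classification forces $\gamma_3=g_3^{\det K}$, and since $g_3(A,B,r)=g_3(a,b,r)^{\det K}$ whenever $(A,B)=(Ka,Kb)$, the regenerated $G_3$ automatically matches, so no separate sign reconciliation is needed.
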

\begin{proof}

Put $G\edf G(\alpha,a,b,c,r,t)$, $G'\edf G(\alpha,a',b',c',r,t')$,
$$
\left\{
\begin{array}{lll}
g_0=g_0(\alpha,t),& g_i=g_i(a,b,c),&g_3=g_3(a,b,r),\\
g_0'=g_0(\alpha,t'),& g_i'=g_i(a',b',c'),& g_3'=g_3(a',b',r).
\end{array}\right. $$
First, suppose $G$, $G'$ are conjugate in $\Aff(U)$, and let $\theta\in\Aff(U)$,
$$
\theta\begin{pmatrix}
w\\z	
\end{pmatrix}=\begin{pmatrix}
\mu &0\\
\lambda&\nu	
\end{pmatrix}\begin{pmatrix}
w\\z 	
\end{pmatrix}+ \begin{pmatrix}
u\\ \zeta 	
\end{pmatrix}
$$
(with  $\mu\in \R^*_+$, $\nu\in\C^*$, $u\in\R$, $\lambda$, $\zeta\in\C$) be such that 
$
\theta G' \theta^{-1}=G$.

It follows that, putting, $\gamma_i\edf \theta g'_i\theta^{-1}$ for $0\leq i\leq 3$, $(\gamma_0,\gamma_1,\gamma_2,\gamma_3)$ is a system of generators of $G$. Note that this system has the properties:
\begin{enumerate}[(C1)]
\item $\gamma_3$ is a generator of the cyclic group	$G\cap\T(U)$, in other words $\gamma_3\in\{g_3,g_3^{-1}\}$.
\item $(\gamma_1,\gamma_2,\gamma_3)$ is a system of generators of 	 	$G(a,b,c,r)=G \cap\Aff_1^1(U)$ with the property $[\gamma_1,\gamma_2]=\gamma_3^r$. 
\item The first diagonal element of the linear part of $\gamma_0$ is $\alpha$. \end{enumerate}

By Lemma \ref{SetOfCanonical} proved below, it follows that there exists $K\in\GL(2,\Z)$,  $k=(k_1,k_2)\in\Z^2$ and $\bpm s_0\\s_1\\s_2 \epm \in\Z^3$ such that
\begin{equation}\label{gammai-gi}
\gamma_0=g_0g_1^{k_1}g_2^{k_2} g_3^{s_0},\ \gamma_1=g_1^{k_{11}}g_2^{k_{12}}g_3^{s_1},\ \gamma_2=g_1^{k_{21}}g_2^{k_{22}}g_3^{s_2},\ \gamma_3=g_3^{\det(K)}.	
\end{equation}

Identifying the (2,1)-entries of the matrices associated with the linear parts of the two sides in the equalities $\theta g_i'\theta^{-1}= g_1^{k_{i1}}g_2^{k_{i2}}g_3^{s_i}$ for $1\leq i\leq 2$, we obtain $\nu\mu^{-1} b'_i\in\R$, so  $\nu\mu^{-1}\in\R$, i.e. $\nu\in\C^*\cap\R=\R^*$.

  Put 
$$G_0\edf g_0,\ G_1\edf g_1^{k_{11}}g_2^{k_{12}},\ G_2\edf g_1^{k_{21}}g_2^{k_{22}},\ G_3\edf g_3^{\det(K)}.$$
By Lemma \ref{SetOfCanonical}  $(G_0,G_1,G_2,G_3)$ is a system of generators of $G$ and, by Lemma \ref{explic-C-formula}, we have
$$
G_0=g_0(\alpha,t),\ G_i= g_i(A,B,C),\ G_3=g_3(A,B,r),
$$
where $(A,B,C)=K\cdot (a,b,c)$.  

On the other hand we can write $\theta=\tau\circ \delta$ where
$$
\tau\begin{pmatrix}
w\\z	
\end{pmatrix}=\begin{pmatrix}
1 &0\\
\lambda\mu^{-1}& 1	
\end{pmatrix}\begin{pmatrix}
w\\z 	
\end{pmatrix}+ \begin{pmatrix}
u\\ \zeta 	
\end{pmatrix},\ \delta\begin{pmatrix}
w\\z	
\end{pmatrix}=\begin{pmatrix}
\mu w\\ \nu z
\end{pmatrix}.
$$
Note that $\tau\in \Aff^1_1(U)$. We have  
\begin{equation} 
\gamma_i=\tau g_i''\tau^{-1}	 \hbox{ for }0\leq i\leq 3
\end{equation}
where $g_i''\edf \delta g'_i\delta^{-1}$. Direct computations show that  
$$
g_0''=g_0(\alpha,t''), \ g_i''=g_i(a'',b'',c'')  \hbox{ for }1\leq i\leq 2,\ g_3''=g_3(a'',b'',r),
$$
where $a''=\mu a'$, $b''=\nu\mu^{-1} b'$, $c''=\nu c'$, $t''=\nu t'$. Since $\nu\mu^{-1}\in\R^*$,   we can write
\begin{equation}\label{a''b''c''t''}
(a'',b'',c'',t'')=(\mu,\nu\mu^{-1})\cdot (a',b',c',t').	
\end{equation}
Taking into account (\ref{gammai-gi}) we obtain
\begin{equation}\label{tau-gi-tau{-1}}
\begin{split}
\tau g_0''\tau^{-1}&=G_0g_1^{k_1}g_2^{k_2} g_3^{s_0},\ \tau g_1''\tau^{-1}=G_1G_3^{s_1\det(K)},\\ 
\tau g_2''\tau^{-1}&=G_2G_3^{s_2\det(K)},\ \tau g_3''\tau^{-1}=G_3.	
\end{split}	
\end{equation}
Since $g_1$ and $g_2$ commute modulo $\langle g_3^r\rangle$ (see Remark \ref{com-rel-II}) it follows that, putting $L\edf K^{-1}$, $l=(l_1,l_2)\edf k L$, we have
$$
g_1=G_1^{l_{11}}G_2^{l_{12}}g_3^{rm_1},\ g_2=G_1^{l_{21}}G_2^{l_{22}}g_3^{rm_2},\ g_1^{k_1}g_2^{k_2}=G_1^{l_1}G_2^{l_2} g_3^{rm} \hbox{ with }m_1,\ m_2,\ m\in\Z,
$$
and the formula $\tau g_0''\tau^{-1}=G_0g_1^{k_1}g_2^{k_2} g_3^{s_0}$ can be written as 
$$\tau g_0''\tau^{-1}=G_0G_1^{l_1}G_2^{l_2} g_3^{s_0+rm}.$$
 Therefore (\ref{tau-gi-tau{-1}}) becomes
\begin{equation}
\begin{split}
\tau g_0''\tau^{-1}&=G_0G_1^{l_1}G_2^{l_2} G_3^{(s_0+rm)\det(K)},\ \tau g_1''\tau^{-1}=G_1G_3^{s_1\det(K)},\\ \tau g_2''\tau^{-1}&=G_2G_3^{s_2\det(K)},\ \tau g_3''\tau^{-1}=G_3.	
\end{split}
\end{equation}
By Lemma \ref{tau(gi)tau(-1)}, it follows that
$$
a''=A,\ b''=B,\ ([c'']_{a''b''},\ [t'']_{a''b''})=l\cdot([C]_{AB},t_{AB})
$$
so, by  (\ref{a''b''c''t''}), 
$$
(\mu,\nu\mu^{-1})\cdot (a',b',[c']_{a'b'},[t']_{a'b'})=l\cdot (A,B,[C]_{AB},[t]_{AB}),
$$
i.e.
$$
(\mu,\nu\mu^{-1})\cdot (a',b',[c']_{a'b'},[t']_{a'b'})=l\cdot(K\cdot(a,b,[c]_{ab},[t]_{ab}),
$$
which completes the proof of the first implication.\\

For the converse, note first that, if
$$
(a',b',[c']_{a'b'},[t']_{a'b'})\in \big((\R^*_+\times\R^*)\times (\GL(2,\Z)\ltimes_\varphi \Z^2)\big)\cdot (a,b,[c]_{ab},[t]_{ab}),
$$
then there exists $(\mu,\rho)\in\R^*_+\times\R^*$, and $(K,k)\in(\GL(2,\Z)\ltimes_\varphi \Z^2)$  such that  
\begin{equation}\label{(mu,rho).(a',b',[c'],[t'])}
(\mu,\rho)\cdot(a',b',[c']_{a'b'},[t']_{a'b'}) =(K,k)\cdot (a,b,[c]_{ab},[t]_{ab}).
\end{equation}
We have to show that there exists $\theta\in \Aff(U)$ such that 
$$
\theta G(\alpha,a',b',c',r,t')\theta^{-1}= G(\alpha,a,b,c,r,t).$$
Put  $g_0''\edf g_0(\alpha,t'')$,  $g_i''\edf g_i(a'',b'',c'')$ for  $1\leq i\leq 2$,  $g_3''\edf g_3(a'',b'',r)$, where 
$$(a'',b'',c'',t'')\edf (\mu,\rho)\cdot(a',b',c',t').$$
An easy computation shows that   
\begin{equation}\label{g_i''-g_i'}
g_i''=\delta g_i'\delta^{-1} \hbox{ for }0\leq i\leq 3,
 \end{equation}
 where $\delta\in\Aff(U)$ is given by $\delta(w,z)=(\mu w, \mu\rho z)$.

On the other hand, by (\ref{(mu,rho).(a',b',[c'],[t'])}) we have   
\begin{equation}\label{a''b''}
\begin{split}
(a'',b'',[c'']_{a''b''},[t'']_{a''b''})=&   kK^{-1}\cdot(K\cdot(a,b,[c]_{ab},[t]_{ab})= \\  = &(K,k)\cdot (a,b,[c]_{ab},[t]_{ab})= l\cdot(A,B,[C]_{AB},[t]_{AB}),
\end{split}
\end{equation}
where $l\edf kK^{-1}=(l_1,l_2)$. 
Formula (\ref{a''b''}) shows that there exists $s_0\in\Z $, $s= \bpm s_1\\s_2\epm\in\Z^2$ such that
$$
\left\{\begin{array}{ccl}
a''&=&A.\\
b''&=&B.\\
c''&=&C-\frac{\alpha(l A)}{\alpha-1}B-\frac{ lB }{\alpha-1}A+\frac{A\wedge B}{r}s.\\
t''&=&t+\frac{\alpha(l A)(l B)}{1-\alpha}	+ l C +  \frac{l_{1}(l_{1}-1)A_1B_1+l_{2}(l_{2}-1)A_2B_2}{2}+l_{1}l_{2}B_1A_2 +  \frac{s_0}{r}B\wedge A.
\end{array}
 \right.
$$
Let $\tau\in \Aff^1_1(U)$ be given by
$$
\begin{pmatrix}
w\\
z 	
\end{pmatrix}\textmap{\tau} \begin{pmatrix}
1  &0\\
\lambda	&1
\end{pmatrix}\begin{pmatrix}
w\\
z 	
\end{pmatrix}+\begin{pmatrix}
u\\ 0	
\end{pmatrix},\ \lambda\edf \frac{lB}{\alpha-1},\ u\edf \frac{\alpha (l A)}{1-\alpha}.$$
Using the implication $ii)\Rightarrow i)$ of  Lemma \ref{tau(gi)tau(-1)}, it follows that:
\begin{equation}\label{Gi->tau-g''i-tau{-1}}
\begin{split}
\tau g_0''\tau^{-1}=&g_0(\alpha,t)g_1(A,B,C)^{l_1}g_2(A,B,C)^{l_2}g_3(A,B,r)^{s_0},\\	
\tau g_i''\tau^{-1}=&g_i(A,B,C)g_3(A,B,r)^{s_i} \hbox{ for } 1\leq i\leq 2,\\
\tau g_3''\tau^{-1}=&g_3(A,B,r).
\end{split}	
\end{equation}
Combining (\ref{g_i''-g_i'}) and (\ref{Gi->tau-g''i-tau{-1}})  and putting $\theta=\tau \circ \delta$, we obtain
\begin{equation*} 
\begin{split} 	
\theta g_0'\theta^{-1}=&g_0(\alpha,t)g_1(A,B,C)^{l_1}g_2(A,B,C)^{l_2}g_3(A,B,r)^{s_0},\\
\theta g_i'\theta^{-1}=&g_i(A,B,C)g_3(A,B,r)^{s_i}, 1\leq i\leq 2,\\
\theta g_3'\theta^{-1}=&g_3(A,B,r).
\end{split} 
\end{equation*}
This shows that $\theta G(\alpha, a',b',c',r,t')\theta^{-1}= G(\alpha, A,B,C,r,t)$. On the other hand, since $(A,B,[C]_{AB},[t]_{AB})=K\cdot(a,b,[c]_{ab},[t]_{ab})$, it follows by Theorem \ref{Quotient-approx} that    $G(\alpha, A,B,C,r,t)=G(\alpha,a,b,c,r,t)$. 

Therefore $\theta G(\alpha, a',b',c',r,t')\theta^{-1}=G(\alpha,a,b,c,r,t)$, which completes the proof.
 
\end{proof}

\begin{lm}\label{SetOfCanonical}
The set of system of generators $(\gamma_0,\gamma_1,\gamma_2,\gamma_3)$ of $G$ satisfying the properties (C1), (C2), (C3) stated in the proof of Theorem \ref{main} is
$$
\bigg\{\big(g_0g_1^{k_1}g_2^{k_2} g_3^{s_0}, g_1^{k_{11}}g_2^{k_{12}}g_3^{s_1},g_1^{k_{21}}g_2^{k_{22}}g_3^{s_2},g_3^{\det(K)}\big)|\, K\in\GL(2,\Z),\, k\in\Z^2,\, \bpm s_0\\s_1\\s_2 \epm \in\Z^3\bigg\}.  	
$$
\end{lm}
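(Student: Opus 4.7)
The plan is to prove both inclusions of the claimed set equality. For the forward inclusion, that every tuple of the displayed form is a generating system of $G$ satisfying (C1)--(C3), I verify each property directly. First, $g_3^{\det(K)} \in \{g_3^{\pm 1}\}$ generates $G \cap \T(U) = \langle g_3 \rangle$, giving (C1). Next, the standard Heisenberg identity $[g_1^a g_2^b, g_1^c g_2^d] = g_3^{r(ad - bc)}$ — valid because $[g_1, g_2] = g_3^r$ (Remark \ref{com-rel-II}) and $g_3$ is central in $G(a,b,c,r)$ (Remark \ref{RelationsSubgroups}) — yields $[\gamma_1, \gamma_2] = g_3^{r \det(K)} = \gamma_3^r$, and the rows of $K \in \GL(2,\Z)$ form a $\Z$-basis of $\Z^2$, so together with $\gamma_3$ the pair $\gamma_1, \gamma_2$ generates $G(a,b,c,r)$, giving (C2). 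Finally, the first diagonal entries of the linear parts of $g_1, g_2, g_3$ are all $1$, so the first diagonal of the linear part of $\gamma_0$ equals $\alpha$, giving (C3). Since $K$ is invertible over $\Z$, one recovers $g_1, g_2$ from $\gamma_1, \gamma_2, \gamma_3$ and then $g_0$ from $\gamma_0$, so $\langle \gamma_0, \gamma_1, \gamma_2, \gamma_3 \rangle = G$.

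For the reverse inclusion, let $(\gamma_0, \gamma_1, \gamma_2, \gamma_3)$ be any generating system of $G$ satisfying (C1)--(C3). From (C1) one has $\gamma_3 = g_3^\epsilon$ for some $\epsilon \in \{\pm 1\}$. From (C2), $\gamma_1, \gamma_2 \in G(a,b,c,r)$, so Remark \ref{gen-form-g-in-G(a,b,c,r-t)} provides unique integers $k_{ij}, s_i$ with $\gamma_i = g_1^{k_{i1}} g_2^{k_{i2}} g_3^{s_i}$ for $i = 1, 2$; set $K \edf (k_{ij})$. The quotient $G(a,b,c,r)/\langle g_3 \rangle$ is freely generated by $[g_1], [g_2]$, and the hypothesis that $(\gamma_1, \gamma_2, \gamma_3)$ generates $G(a,b,c,r)$ forces the rows of $K$ to form a $\Z$-basis of $\Z^2$, hence $K \in \GL(2,\Z)$. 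The Heisenberg formula then gives $[\gamma_1, \gamma_2] = g_3^{r \det(K)}$, and comparing with $\gamma_3^r = g_3^{\epsilon r}$ yields $\det(K) = \epsilon$ (using $r \neq 0$), so $\gamma_3 = g_3^{\det(K)}$, as required.

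It remains to handle $\gamma_0$. Applying Remark \ref{gen-form-g-in-G(a,b,c,r-t)} in $G$, write uniquely $\gamma_0 = g_0^l g_1^{n_1} g_2^{n_2} g_3^{k}$. Since the first diagonal entries of the linear parts of $g_1, g_2, g_3$ are $1$, the first diagonal of the linear part of $\gamma_0$ is $\alpha^l$; condition (C3) forces $\alpha^l = \alpha$, so $l = 1$ (as $\alpha > 1$). Setting $(k_1, k_2, s_0) \edf (n_1, n_2, k)$ produces the required expression for $\gamma_0$, completing the proof. The only mildly delicate point is the Heisenberg commutator computation in the two inclusions; everything else reduces to straightforward application of the canonical-form Remark \ref{gen-form-g-in-G(a,b,c,r-t)} and inspection of the $2\times 2$ linear parts of the generators.
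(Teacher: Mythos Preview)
Your proof is correct and follows essentially the same approach as the paper, which simply says ``This follows using Remark~\ref{gen-form-g-in-G(a,b,c,r-t)} by elementary computations.'' You have spelled out those computations carefully: the Heisenberg identity $[g_1^{a}g_2^{b},g_1^{c}g_2^{d}]=g_3^{r(ad-bc)}$, the passage to the free abelian quotient $G(a,b,c,r)/\langle g_3\rangle\simeq\Z^2$ to force $K\in\GL(2,\Z)$, and the reading of the $(1,1)$ entry of the linear part of $\gamma_0$ to pin down the exponent $l=1$.
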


\begin{proof}
This follows using Remark \ref{gen-form-g-in-G(a,b,c,r-t)} by elementary computations.	
\end{proof}

Taking into account Remark \ref{ClassRem(alpha,r)}, Theorem \ref{main} gives the following geometric interpretation of the set of biholomorphism classes of type II Inoue surfaces associated with a pair $(\alpha,r)$:
\begin{thry}\label{Classif-Theorem-II}
Let $(\alpha,r)\in]1,+\infty[\times \N^*$ with $\alpha$ $S^+$-admissible. The set of biholomorphism 	classes of type II Inoue surfaces associated with $(\alpha,r)$ is naturally  identified with the quotient space
$$
\mathscr{Q}_{\alpha,r}\edf \qmod{\mathscr{A}_{\alpha,r}}{(\R^*_+\times\R^*)\times (\GL(2,\Z)\ltimes_\psi \Z^2_d)}.
$$
\end{thry}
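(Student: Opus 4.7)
The plan is to obtain this theorem essentially as an immediate corollary of Theorem \ref{main}, by repackaging its algebraic content in geometric terms.

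First, I would invoke Remark \ref{r=r'}: since $(\alpha,r)$ is a biholomorphism invariant of type II Inoue surfaces, the set of biholomorphism classes associated with a fixed pair $(\alpha,r)$ is canonically identified with the quotient ${\cal A}_{\alpha,r}/\sim$, where $\sim$ is the equivalence relation introduced in Remark \ref{ClassRem(alpha,r)}. By Corollary \ref{ClassifCoro}, two quadruples $(a,b,c,t)$, $(a',b',c',t')$ in ${\cal A}_{\alpha,r}$ satisfy $(a,b,c,t)\sim (a',b',c',t')$ if and only if the corresponding subgroups $G(\alpha,a,b,c,r,t)$, $G(\alpha,a',b',c',r,t')$ are conjugate in $\Aff(U)$.

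Next, I would directly apply Theorem \ref{main}, which translates this conjugacy condition into an orbit condition for the action of $(\R^*_+\times\R^*)\times(\GL(2,\Z)\ltimes_\varphi\Z^2)$ on the quotient space $\mathscr{A}_{\alpha,r}={\cal A}_{\alpha,r}/\Z^3$, equivalently for the descended action of the smaller group $(\R^*_+\times\R^*)\times(\GL(2,\Z)\ltimes_\psi\Z^2_d)$. Combining these observations yields a chain of natural identifications
\[
\{\text{bihol.\ classes with invariants }(\alpha,r)\} \;=\; {\cal A}_{\alpha,r}/\sim \;=\; \mathscr{A}_{\alpha,r}\big/\big((\R^*_+\times\R^*)\times(\GL(2,\Z)\ltimes_\psi\Z^2_d)\big) \;=\; \mathscr{Q}_{\alpha,r},
\]
which is the statement of the theorem.

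The only point that would still need verification is the compatibility of the passage ${\cal A}_{\alpha,r}\to\mathscr{A}_{\alpha,r}$ with the equivalence relation $\sim$. This is immediate from Remark \ref{equiv-for-c'-t'}, which shows that the defining $\Z^3$-action \eqref{Z3-action} leaves the subgroup $G(\alpha,a,b,c,r,t)$ unchanged and is therefore already contained in $\sim$ (in fact in the finer relation $\approx$). Since all the substantive algebraic work -- producing the explicit element of $(\R^*_+\times\R^*)\times(\GL(2,\Z)\ltimes_\varphi\Z^2)$ from a conjugating affine transformation, and conversely constructing such a conjugation from an orbit equivalence -- has already been carried out in the proof of Theorem \ref{main}, there is no real obstacle remaining here; the hard part was entirely absorbed into that earlier theorem, and this final statement amounts to a clean geometric reformulation of it.
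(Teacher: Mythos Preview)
Your proposal is correct and follows exactly the same approach as the paper: the theorem is stated there as an immediate consequence of Remark \ref{ClassRem(alpha,r)} and Theorem \ref{main}, with no additional proof given. Your write-up is in fact more detailed than the paper's, which simply records the theorem as a direct reformulation of Theorem \ref{main}.
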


\subsubsection{The fibre over a similarity class of $\SL(2,\Z)$-matrices } \label{fibre-over-simclass}

In section \ref{Intro-typeII} we have put ${\cal N}_\alpha\edf \{N\in \SL(2,\Z)| \ \alpha\in \Spec(N)\}$. Consider the {\it surjective} maps
$$
\begin{array}{r|l}
\begin{tikzcd}
\mathscr{A}_{\alpha,r}\ar[rr, "\psi_{\alpha,r}" ]\ar[rd, "\phi_{\alpha,r}"']&&{\cal N}_\alpha 	\\ 
&{\cal P}_\alpha \ar[ru, "\eta_\alpha"'] & 
\end{tikzcd}
& 
\begin{array}{lll}
\phi_{\alpha,r}(a,b,[c]_{ab},[t]_{ab})&\edf&(a,b),	\\
\eta_{\alpha}(a,b)&\edf&N(\alpha,a,b),\\
\psi_{\alpha,r}&\edf &  \eta_\alpha\circ \phi_{\alpha,r},
\end{array}
\end{array}
$$
and let the group
$${\cal G}\edf (\R^*_+\times\R^*)\times (\GL(2,\Z)\ltimes_\psi \Z_d^2) $$
act on the spaces ${\cal P}_\alpha$, ${\cal N}_\alpha$ by 
$$((\mu,\rho),(K,[k]))\cdot(a,b)\edf (\mu Ka,\rho Kb),\ ((\mu,\rho),(K,[k]))\cdot N\edf KNK^{-1}.$$
The maps $\phi_{\alpha,r}$, $\eta_{\alpha}$, $\psi_{\alpha,r}$ are obviously ${\cal G}$-equivariant, so the they induce maps
$$
\begin{tikzcd}
\mathscr{Q}_{\alpha,r}\edf \mathscr{A}_{\alpha,r}/{\cal G}\ar[rr, "\Psi_{\alpha,r}" ]\ar[rd, "\Phi_{\alpha,r}"']&&{\cal N}_\alpha/{\cal G}\ar[r, equal]&[-16pt]{\cal N}_\alpha/\GL(2,\Z)={\cal N}_\alpha/\sim	\\ 
&{\cal P}_\alpha/{\cal G} \ar[ru, "\simeq", "H_\alpha"']\ar[rr, equal] &&{\cal P}_\alpha/(\R^*_+\times\R^*)\times\GL(2,\Z)& 
\end{tikzcd}
$$
between the respective ${\cal G}$-quotients. We are interested in the fibres of $\Psi_{\alpha,r}$.

Using the same method as is the proof of the implication $(2)\Rightarrow(1)$ of Proposition \ref{orbits-similitude}, we obtain easily:
\begin{re}
The map $H_\alpha:{\cal P}_\alpha/(\R^*_+\times\R^*)\times\GL(2,\Z)\to {\cal N}_\alpha/\sim$ induced by $\eta_\alpha$ is bijective, in particular for any $(a,b)\in {\cal P}_\alpha$ we have   
$$
\Psi_{\alpha,r}^{-1}([N(\alpha,a,b)])=\Phi_{\alpha,r}^{-1}([(a,b)]).
$$
\end{re}

We will describe the fibres $\Phi_{\alpha,r}^{-1}([(a,b)])$ of $\Phi_{\alpha,r}$   using the general

\begin{re}\label{nice-remark}
Let $G$ be a group, $X$, $B$ be topological spaces endowed with  $G$-actions by homeomorphisms, $\phi:X\to B$ a $G$-equivariant map, and $\Phi: X/G\to B/G$ the induced map between the quotients. Let $b\in B$. The map 
$$\iota_b: \phi^{-1}(b)/G_b\to    \Phi^{-1}([b]_G),\ \iota_b([x]_{G_b})\edf [x]_G$$
is a continuous bijection. If the map $\phi^{-1}(b)\to \phi^{-1}(Gb)/G$ induced by the inclusion $\phi^{-1}(b)\hookrightarrow \phi^{-1}(Gb)$ is open, then $\iota_b$ is a homeomorphism.
\end{re}

It is easy to see that the openness condition in Remark \ref{nice-remark} is satisfied, so:

\begin{pr}\label{fibre-description} Let  ${\cal G}_{(a,b)}$ be the stabiliser  of $(a,b)$ in ${\cal G}$. The obvious map 
$$\bigg(\{(a,b)\}\times \mathscr{C}_{a,b,r}\times\big(\C/ \frac{b\wedge a}{r}\Z\big)\bigg)\big/{\cal G}_{(a,b)}\to \Phi_{\alpha,r}^{-1}([(a,b)])=\Psi_{\alpha,r}^{-1}([N(\alpha,a,b)])
$$
is a homeomorphism.
\end{pr}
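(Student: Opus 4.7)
The plan is to apply Remark \ref{nice-remark} with $G={\cal G}=(\R^*_+\times\R^*)\times(\GL(2,\Z)\ltimes_\psi\Z_d^2)$, $X=\mathscr{A}_{\alpha,r}$, $B={\cal P}_\alpha$, $\phi=\phi_{\alpha,r}$, and basepoint $(a,b)$. By construction of $\mathscr{A}_{\alpha,r}$ the fibre $\phi_{\alpha,r}^{-1}(a,b)$ coincides with $\{(a,b)\}\times\mathscr{C}_{a,b,r}\times(\C/\frac{b\wedge a}{r}\Z)$, so Remark \ref{nice-remark} immediately supplies the continuous bijection $\iota_{(a,b)}$ appearing in the proposition. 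What remains is to verify the openness hypothesis of Remark \ref{nice-remark}, namely that for every open $V\subseteq\phi_{\alpha,r}^{-1}(a,b)$ the ${\cal G}$-saturation ${\cal G}\cdot V$ is open in $\mathscr{A}_{\alpha,r}$ (this clearly implies openness inside $\phi_{\alpha,r}^{-1}({\cal G}\cdot(a,b))$ as well).

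Since $\mathscr{C}_{a,b,r}$ is finite and therefore discrete in the subspace topology, it suffices to treat basic opens $V=\{(a,b)\}\times\{[c]_{ab}\}\times W$ with $W$ open in $\C/\frac{b\wedge a}{r}\Z$. I would first spread $V$ by the subgroup $\R^*_+\times\R^*\subset{\cal G}$, showing that
\[(\R^*_+\times\R^*)\cdot V=\{(\mu a,\rho b,[\mu\rho c]_{\mu a,\rho b},[\mu\rho t]_{\mu a,\rho b}):(\mu,\rho)\in\R^*_+\times\R^*,\ [t]_{ab}\in W\}\]
is open in $\mathscr{A}_{\alpha,r}$. Combining the fundamental system of neighbourhoods of ${\cal T}_{\alpha,r}$, the product topology on ${\cal A}_{\alpha,r}={\cal T}_{\alpha,r}\times\C$, and the quotient by $\Z^3$, a basic neighbourhood of a point $(\mu a,\rho b,[\mu\rho c]_{\mu a,\rho b},[\mu\rho t]_{\mu a,\rho b})$ is of the form $(\mu'a,\rho'b,[\mu'\rho'c]_{\mu'a,\rho'b},[t']_{\mu'a,\rho'b})$ with $\mu'/\mu$, $\rho'/\rho$ close to $1$ and $t'$ close to $\mu\rho t$; writing $t'=\mu'\rho'\cdot(t'/(\mu'\rho'))$ exhibits the point in $(\R^*_+\times\R^*)\cdot V$ as soon as $t'/(\mu'\rho')\in W$, which is guaranteed for small enough perturbations by openness of $W$.

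Once this is established, the full ${\cal G}$-saturation
\[{\cal G}\cdot V=\bigcup_{(K,[k])\in\GL(2,\Z)\ltimes_\psi\Z_d^2}(K,[k])\cdot\bigl((\R^*_+\times\R^*)\cdot V\bigr)\]
is a countable union of images of an open set under homeomorphisms of $\mathscr{A}_{\alpha,r}$, hence open. This verifies the hypothesis of Remark \ref{nice-remark} and completes the proof.

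The step I expect to be the main obstacle is the openness of $(\R^*_+\times\R^*)\cdot V$: one has to reconcile three layers of construction — the product topology on ${\cal A}_{\alpha,r}$, the $\Z^3$-quotient defining $\mathscr{A}_{\alpha,r}$, and the explicit fundamental neighbourhood system of ${\cal T}_{\alpha,r}$ — and exploit the fact that the continuous part of the ${\cal G}$-action, $(\mu,\rho)\cdot(a,b,c,t)=(\mu a,\rho b,\mu\rho c,\mu\rho t)$, is exactly the rescaling used to define the neighbourhood basis. Once this local compatibility is unpacked, the remainder of the argument is purely formal.
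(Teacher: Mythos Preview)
Your proposal is correct and follows exactly the paper's approach: the paper simply asserts that the openness condition in Remark \ref{nice-remark} is easy to verify and states the proposition, while you supply the explicit verification. Your argument for openness of $(\R^*_+\times\R^*)\cdot V$ is sound, since the rescaling action coincides with the definition of the fundamental neighbourhood system on ${\cal T}_{\alpha,r}$, and the discrete part $\GL(2,\Z)\ltimes_\psi\Z_d^2$ acts by homeomorphisms (the $\GL(2,\Z)$-action commutes with the $(\R^*_+\times\R^*)$-scaling, as noted in the paper).
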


Now note that $\{(a,b)\}\times\mathscr{C}_{a,b,r}\times\big(\C/ \frac{b\wedge a}{r}\Z\big)$ can be obviously identified with the product $\mathscr{C}_{a,b,r}\times\big(\C/ \frac{b\wedge a}{r}\Z\big)$; we let ${\cal G}_{(a,b)}$ act on the product $\mathscr{C}_{a,b,r}\times\big(\C/ \frac{b\wedge a}{r}\Z\big)$ via this identification and we endow its first factor $\mathscr{C}_{a,b,r}$ with  the   ${\cal G}_{(a,b)}$-action which makes the first projection
$$
p_1: \mathscr{C}_{a,b,r}\times\big(\C/ \frac{b\wedge a}{r}\Z\big)\to \mathscr{C}_{a,b,r}
$$
${\cal G}_{(a,b)}$-equivariant.  More precisely, for $((\mu,\rho),(K,[k]))\in {\cal G}_{a,b}$ and $([c]_{ab},[t]_{ab})\in \mathscr{C}_{a,b,r}\times\big(\C/ \frac{b\wedge a}{r}\Z\big)$ we  have
\begin{align}
((\mu,\rho),(K,[k]))\cdot ([c]_{ab},[t]_{ab})&=([\mu \rho\Cg]_{ab},[\mu\rho\tg ]_{ab}),\label{action-Gab-on-product}\\ 
((\mu,\rho),(K,[k]))\cdot ([c]_{ab})&=[\mu\rho\Cg]_{ab}, \label{action-Gab-on-Cabr}
\end{align}
where the pair $(\Cg,\tg)$ is defined by the equality  $(Ka, Kb, \Cg,\tg)=K\cdot(k\cdot  (a,b,c,t))$.

Applying Remark \ref{nice-remark} this time to the ${\cal G}_{(a,b)}$-equivariant map $p_1$, and taking into account Proposition \ref{fibre-description}, we obtain:
\begin{pr}\label{the-map-Pi}  Let $(a,b)$ be an $\alpha$-compatible pair. The fibre 	$\Phi_{\alpha,r}^{-1}([(a,b)])$ comes with a canonical surjective map
$$
\Pi:\Phi_{\alpha,r}^{-1}([(a,b)])=\bigg(\mathscr{C}_{a,b,r}\times\big(\C/ \frac{b\wedge a}{r}\Z\big)\bigg)\big/{\cal G}_{(a,b)} \to  \mathscr{C}_{a,b,r}/{\cal G}_{(a,b)} 
$$
induced by $p_1$. Its fibre over an orbit   ${\cal G}_{(a,b)} [c]_{ab}$ is canonically identified with the quotient 
$$
\big(\C/ \frac{b\wedge a}{r}\Z\big)\big/{\cal G}_{(a,b),[c]_{ab}}
$$
of $\C/ \frac{b\wedge a}{r}\Z$ by the stabiliser ${\cal G}_{(a,b),[c]_{ab}}$ of $[c]_{ab}$ in ${\cal G}_{(a,b)}$.

\end{pr}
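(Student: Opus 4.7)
The plan is to deduce this directly from Proposition \ref{fibre-description} together with the general categorical fact stated in Remark \ref{nice-remark}, applied this time to the first projection $p_1$ instead of $\phi_{\alpha,r}$. By Proposition \ref{fibre-description}, the fibre $\Phi_{\alpha,r}^{-1}([(a,b)])$ is canonically homeomorphic to the quotient $\bigl(\mathscr{C}_{a,b,r}\times (\C/\frac{b\wedge a}{r}\Z)\bigr)/{\cal G}_{(a,b)}$. Formulae (\ref{action-Gab-on-product}) and (\ref{action-Gab-on-Cabr}) show that $p_1$ is ${\cal G}_{(a,b)}$-equivariant, so $p_1$ descends to a continuous map $\Pi$ on the quotients, and the surjectivity of $\Pi$ is inherited from that of $p_1$.

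Next, for the fibre of $\Pi$ over an orbit ${\cal G}_{(a,b)}[c]_{ab}$, I would invoke Remark \ref{nice-remark} with $G={\cal G}_{(a,b)}$, $X=\mathscr{C}_{a,b,r}\times (\C/\frac{b\wedge a}{r}\Z)$, $B=\mathscr{C}_{a,b,r}$, $\phi=p_1$ and base point $[c]_{ab}$. The fibre $p_1^{-1}([c]_{ab})=\{[c]_{ab}\}\times (\C/\frac{b\wedge a}{r}\Z)$ is canonically identified with $\C/\frac{b\wedge a}{r}\Z$, and by the very definition of the ${\cal G}_{(a,b)}$-action on $\mathscr{C}_{a,b,r}$ via $p_1$-equivariance, the stabiliser of $[c]_{ab}$ coincides with ${\cal G}_{(a,b),[c]_{ab}}$. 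This already produces a canonical continuous bijection
\[
\iota_{[c]_{ab}}:\ (\C/\tfrac{b\wedge a}{r}\Z)\big/{\cal G}_{(a,b),[c]_{ab}}\ \longrightarrow\ \Pi^{-1}({\cal G}_{(a,b)}[c]_{ab}).
\]

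The only remaining point is to upgrade this bijection to a homeomorphism, for which the openness hypothesis of Remark \ref{nice-remark} must be checked. This is the least automatic step, but it is easy: $\mathscr{C}_{a,b,r}$ is finite, being a quotient of $\Z^2$ by the finite-index subgroup $(I_2-N)\Z^2$, so it carries the discrete topology. Consequently the singleton $\{[c]_{ab}\}$ is open in the orbit ${\cal G}_{(a,b)}[c]_{ab}$, hence the preimage $p_1^{-1}([c]_{ab})$ is open in $p_1^{-1}({\cal G}_{(a,b)}[c]_{ab})$, and the induced map $p_1^{-1}([c]_{ab})\to p_1^{-1}({\cal G}_{(a,b)}[c]_{ab})/{\cal G}_{(a,b)}$ is open. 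Therefore $\iota_{[c]_{ab}}$ is a homeomorphism, which yields the asserted identification of the fibre and completes the proof.
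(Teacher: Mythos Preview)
Your proof is correct and follows exactly the approach indicated in the paper: the paper simply states that the proposition is obtained by applying Remark \ref{nice-remark} to the ${\cal G}_{(a,b)}$-equivariant map $p_1$, combined with Proposition \ref{fibre-description}. You have faithfully carried this out and, in addition, supplied the verification of the openness hypothesis (via the discreteness of $\mathscr{C}_{a,b,r}$) that the paper leaves implicit.
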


Therefore, since the quotient topology on the finite set $\mathscr{C}_{a,b,r}/{\cal G}_{(a,b)}$ is discrete and any quotient of $\C/ \frac{b\wedge a}{r}\Z\simeq\C^*$  is obviously connected, it follows that
\begin{co} Let $(a,b)$ be an $\alpha$-compatible pair. The connected components of 	the fibre 	$\Phi_{\alpha,r}^{-1}([(a,b)])$ are parameterised by the finite set $\mathscr{C}_{a,b,r}/{\cal G}_{(a,b)}$. The connected component corresponding to an orbit ${\cal G}_{(a,b)} [c]_{ab}$ is canonically identified with   
$\big(\C/ \frac{b\wedge a}{r}\Z\big)\big/{\cal G}_{(a,b),[c]_{ab}}$.

\end{co}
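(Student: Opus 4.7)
The plan is to derive the statement directly from Proposition \ref{the-map-Pi} by appealing to two elementary topological facts about the surjective map
\[
\Pi:\Phi_{\alpha,r}^{-1}([(a,b)]) \longrightarrow \mathscr{C}_{a,b,r}/{\cal G}_{(a,b)}.
\]
Specifically, I would verify that $\Pi$ is continuous with discrete target and that each of its fibres is connected; the identification of fibres with connected components then follows formally.

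For the discreteness of the target, recall that $\mathscr{C}_{a,b,r}$ is in bijection with $\Z^2/(I_2-N)\Z^2$ via $\pg_{a,b,r}$ and hence is a finite set, which therefore carries the discrete topology. A finite quotient of a discrete space being discrete, $\mathscr{C}_{a,b,r}/{\cal G}_{(a,b)}$ is discrete. Continuity of $\Pi$ follows from the continuity of the first projection $p_1$, which is ${\cal G}_{(a,b)}$-equivariant by the very definition of the ${\cal G}_{(a,b)}$-action on $\mathscr{C}_{a,b,r}$ recorded just before Proposition \ref{the-map-Pi}, and so descends to a continuous map between the quotient spaces.

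For the connectedness of the fibres, Proposition \ref{the-map-Pi} identifies the fibre of $\Pi$ over an orbit ${\cal G}_{(a,b)}[c]_{ab}$ with the quotient $\big(\C/\tfrac{b\wedge a}{r}\Z\big)/{\cal G}_{(a,b),[c]_{ab}}$. The group $\C/\tfrac{b\wedge a}{r}\Z$ is homeomorphic to $\C^*$ via the exponential map, in particular it is connected, and any quotient of a connected space is connected.

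Combining these two observations yields the result: continuity of $\Pi$ together with discreteness of the target forces every connected subset of $\Phi_{\alpha,r}^{-1}([(a,b)])$ to be contained in a single fibre, while connectedness of each fibre shows conversely that each fibre is itself a connected component. This gives both assertions of the corollary at once. I do not foresee any serious obstacle; the only minor point requiring care is the formal verification of continuity of $\Pi$, which is already implicit in the framework of section \ref{fibre-over-simclass}.
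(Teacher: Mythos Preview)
Your proposal is correct and follows essentially the same argument as the paper: the paper simply notes, immediately before stating the corollary, that the quotient topology on the finite set $\mathscr{C}_{a,b,r}/{\cal G}_{(a,b)}$ is discrete and that any quotient of $\C/\tfrac{b\wedge a}{r}\Z\simeq\C^*$ is connected, from which the corollary follows. Your version is a more detailed write-up of the same two observations.
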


Our next goal is to describe explicitly the stabiliser  ${\cal G}_{(a,b)}$ and the quotient $\mathscr{C}_{a,b,r}/{\cal G}_{(a,b)}$ which parameterises the set of connected components of the fibre $\Phi_{\alpha,r}^{-1}([(a,b)])$.

We obviously have:
\begin{equation}\label{stab(a,b)}
{\cal G}_{(a,b)}=\{((\mu,\rho),(K,[k]))\in {\cal G}|\ Ka =\mu^{-1}a,\ Kb=\rho^{-1}b \}.
\end{equation}

Put $N\edf N(\alpha,a,b)$. The formulae $Ka =\mu^{-1}a$, $Kb=\rho^{-1}b$ with $(\mu,\rho)\in \R^*_+\times\R^*$ show that $K$ belongs to the subgroup
\begin{equation}\label{PosCentrDef}
Z^+_{\GL(2,\Z)}(N)\edf \{K\in\GL(2,\Z)| KN=NK,\ Ka\in\R^*_+a\}
\end{equation}
of the centraliser 
\begin{equation*}
\begin{split}
Z_{\GL(2,\Z)}(N)\edf& \{K\in\GL(2,\Z)| KN=NK\}\\
=&\{K\in\GL(2,\Z)|\ a,\ b \hbox{ are eigenvectors for }K\}	
\end{split}
\end{equation*}
of $N$ in the group $\GL(2,\Z)$. Note that $Z^+_{\GL(2,\Z)}(N)$ is infinite cyclic, see section \ref{Z+-section}.

 $Z^+_{\GL(2,\Z)}(N)$ comes with an obvious group morphism $\vartheta :Z^+_{\GL(2,\Z)}(N)\to \R^*_+$ defined by the condition
$$
Ka=\vartheta(K)a  
$$
(i.e. $\vartheta(K)$ is the eigenvalue of $K$ corresponding to the eigenvector $a$). Note that the  eigenvalue of $K$ corresponding to the eigenvector $b$ will be $\varepsilon_K \vartheta(K)^{-1}$, where $\varepsilon_K\coloneq\det(K)\in\{\pm1\}$. Therefore formula (\ref{stab(a,b)}) becomes
$$
{\cal G}_{(a,b)}=\{(\vartheta(K)^{-1}, \varepsilon_K\vartheta(K)),(K,[k]))|\  K\in Z^+_{\GL(2,\Z)}(N),\ [k]\in\Z^2_d\}.
$$
Note that, by (\ref{action-Gab-on-product}) we have 
\begin{re}\label{Isom-for-Gab} The map
$$
Z^+_{\GL(2,\Z)}(N)\ltimes_\psi\Z^2_d\to {\cal G}_{(a,b)},\ (K,[k])\mapsto ((\vartheta(K)^{-1}, \varepsilon_K\vartheta(K)),(K,[k]))
$$  
is a group isomorphism, so ${\cal G}_{(a,b)}$ is naturally isomorphic to the semidirect product $Z^+_{\GL(2,\Z)}(N)\ltimes_\psi\Z^2_d$.  Via this isomorphism, $Z^+_{\GL(2,\Z)}(N)\ltimes_\psi\Z^2_d$ acts on the spaces $\mathscr{C}_{a,b,r}$, $\mathscr{C}_{a,b,r}\times\big(\C/ \frac{b\wedge a}{r}\Z\big)$ by the formulae
\begin{align}
(K,[k])* ([c]_{ab})&=[\varepsilon_K\Cg]_{ab}, \label{action-semid-prod-on-Cabr}\\
(K,[k])* ([c]_{ab},[t]_{ab})&=([\varepsilon_K\Cg]_{ab},[\varepsilon_K\tg ]_{ab}),\label{action-semid-prod-on-product}
\end{align}
where the pair $(\Cg,\tg)$ is defined by the equality  $(Ka, Kb, \Cg,\tg)=K\cdot(k\cdot  (a,b,c,t))$.
\end{re}

\begin{pr}\label{action-of-Z+on-Z2}
Via the bijection $\mathscr{C}_{a,b,r}\textmap{\simeq} \Z^2/(I_2-N)\Z^2$ induced by $\pi_{a,b,r}$ the subgroup $Z^+_{\GL(2,\Z)}(N)$ of the semi-direct product $Z^+_{\GL(2,\Z)}(N)\ltimes_\psi\Z^2_d$ acts on $\Z^2/(I_2-N)\Z^2$ by
\begin{equation}\label{K*[p]}
K* [p]=\frac{r}{2}\bigg( (\varepsilon_KK -I_2)\bpm n_{11}n_{12}\\ n_{21}n_{22}\epm+\varepsilon_K(I_2-N)\bpm k_{11}k_{12}\\ k_{21}k_{22}\epm\bigg)+\varepsilon_K K[p].
\end{equation} 
The first term on the right belongs to $r\Z^2$.
\end{pr}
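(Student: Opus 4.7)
The plan is to transport the action (\ref{action-semid-prod-on-Cabr}) of the subgroup $Z^+_{\GL(2,\Z)}(N) \hookrightarrow Z^+_{\GL(2,\Z)}(N) \ltimes_\psi \Z_d^2$ (obtained by setting the $\Z_d^2$-component to $0$) through the bijection $\pg_{a,b,r}: \mathscr{C}_{a,b,r} \xrightarrow{\simeq} \Z^2/(I_2 - N)\Z^2$. For $K \in Z^+_{\GL(2,\Z)}(N)$, formula (\ref{action-semid-prod-on-Cabr}) specialises to $K * [c]_{ab} = [\varepsilon_K \Cg]_{ab}$, where $\Cg$ is the third component given by Lemma \ref{explic-C-formula}. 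The essential simplification is that $a$ and $b$ are eigenvectors of $K$ with respective eigenvalues $\vartheta(K)$ and $\varepsilon_K \vartheta(K)^{-1}$, so $(Ka)_i (Kb)_i = \varepsilon_K\, a_i b_i$. Substituting this into the formula for $\Cg$ and multiplying by $\varepsilon_K$ (using $\varepsilon_K^2 = 1$) yields
\[
\varepsilon_K \Cg - \tfrac{1}{2}\bpm a_1 b_1\\ a_2 b_2\epm = \varepsilon_K K\!\left(c - \tfrac{1}{2}\bpm a_1 b_1\\ a_2 b_2\epm\right) + \tfrac{\varepsilon_K (b\wedge a)}{2}\bpm k_{11}k_{12}\\ k_{21}k_{22}\epm.
\]

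Next I would apply $I_2 - N$ to both sides, using $KN = NK$ to commute $K$ past $I_2 - N$ on the right, and then invoke the defining identity (\ref{c->p-formula}) for $p \edf \pi_{a,b,r}(c)$. Comparing the result with the defining relation for $p(a, b, \varepsilon_K \Cg, r)$ and dividing through by $(b\wedge a)/r$ reads off exactly the formula (\ref{K*[p]}). Note that the commutation $KN = NK$ also guarantees that $K$ preserves $(I_2 - N)\Z^2$, so the expression $\varepsilon_K K [p]$ is a well-defined class.

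The remaining assertion --- that the first term of (\ref{K*[p]}) lies in $r\Z^2$ --- is equivalent to
\[
(\varepsilon_K K - I_2)\bpm n_{11}n_{12}\\ n_{21}n_{22}\epm + \varepsilon_K(I_2 - N)\bpm k_{11}k_{12}\\ k_{21}k_{22}\epm \in 2\Z^2.
\]
I would obtain this by applying Lemma \ref{mod2congruence-lemma} twice, to the ordered pairs $(N, K)$ and $(K, N)$ respectively, then subtracting the two resulting congruences: the right-hand sides involve $NK$ and $KN$ and therefore cancel because $KN = NK$; multiplying the residual congruence by $\varepsilon_K$ gives exactly the required relation. The main obstacle is purely bookkeeping --- keeping the $\varepsilon_K$ signs consistent through the three passages (the eigenvector identity, the commutation with $I_2 - N$, and the final mod $2$ manipulation) --- but beyond the eigenvector simplification $(Ka)_i(Kb)_i = \varepsilon_K a_i b_i$ no new input is required.
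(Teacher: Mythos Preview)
Your proposal is correct and follows essentially the same route as the paper's own proof: the eigenvector identity $(Ka)_i(Kb)_i=\varepsilon_K a_ib_i$, the commutation $KN=NK$ to push $K$ past $I_2-N$, and the two applications of Lemma~\ref{mod2congruence-lemma} combined via $KN=NK$ are exactly the ingredients the paper uses. The only cosmetic difference is that the paper organises the computation as $(I_2-N)(\varepsilon_K\Cg-c)$ rather than first isolating $\varepsilon_K\Cg-\tfrac{1}{2}\bigl(\begin{smallmatrix}a_1b_1\\a_2b_2\end{smallmatrix}\bigr)$, but the algebra is identical.
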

\begin{proof}
By Remark \ref{Isom-for-Gab}	 we have
$
K\cdot [c]_{ab}=\left[\varepsilon_K C^K\right ]_{ab}$, 
where 
\begin{equation}
\begin{split}
C^K &= \frac{1}{2}\bpm (Ka)_1(Kb)_1\\(Ka)_2(Kb)_2 \epm+ K\bigg(c-\frac{1}{2}\begin{pmatrix}
 a_1b_1\\
a_2b_2	
\end{pmatrix} \bigg)  +\frac{b\wedge a}{2}\bpm k_{11}k_{12}\\ k_{21}k_{22}\epm\\
&=\frac{\varepsilon_K}{2}\bpm a_1b_1\\a_2b_2 \epm+ K\bigg(c-\frac{1}{2}\begin{pmatrix}
 a_1b_1\\
a_2b_2	
\end{pmatrix} \bigg)  +\frac{b\wedge a}{2}\bpm k_{11}k_{12}\\ k_{21}k_{22}\epm.	
\end{split}
\end{equation}
Put $P\edf \pi_{a,b,r}(\varepsilon_K C^K)$. Using  Remark \ref{c->p-remark}, we obtain:
\begin{equation*}
\begin{split}
\frac{b\wedge a}{r}&(P-p)=(I_2-N)\bigg( \varepsilon_K C^K-c\bigg)=
\\
=& (I_2-N)\bigg( \frac{1}{2}\bpm a_1b_1\\a_2b_2 \epm+\varepsilon_KK\bigg(c-\frac{1}{2}\begin{pmatrix}
 a_1b_1\\
a_2b_2	
\end{pmatrix} \bigg)  +\varepsilon_K\frac{b\wedge a}{2}\bpm k_{11}k_{12}\\ k_{21}k_{22}\epm-c\bigg)
\\
=&(I_2-N)\bigg(  (\varepsilon_KK -I_2)\bigg(c- \frac{1}{2}\bpm a_1b_1\\a_2b_2 \epm\bigg) +\varepsilon_K\frac{b\wedge a}{2}\bpm k_{11}k_{12}\\ k_{21}k_{22}\epm\bigg)
\\
=&(I_2-N)(\varepsilon_KK -I_2)\bigg(c- \frac{1}{2}\bpm a_1b_1\\a_2b_2 \epm\bigg)+\varepsilon_K\frac{b\wedge a}{2}(I_2-N)\bpm k_{11}k_{12}\\ k_{21}k_{22}\epm
\\
=&(\varepsilon_KK -I_2)(I_2-N)\bigg(c- \frac{1}{2}\bpm a_1b_1\\a_2b_2 \epm\bigg)+\varepsilon_K\frac{b\wedge a}{2}(I_2-N)\bpm k_{11}k_{12}\\ k_{21}k_{22}\epm
\\
=&b\wedge a\bigg( (\varepsilon_KK -I_2)\bigg(\frac{1}{2}\bpm n_{11}n_{12}\\ n_{21}n_{22}\epm+ \frac{1}{r}p\bigg) + \frac{\varepsilon_K}{2}  (I_2-N)\bpm k_{11}k_{12}\\ k_{21}k_{22}\epm \bigg),
\end{split}
\end{equation*}
which proves the first claim.

For the second claim, we have to prove that
\begin{equation*}
\begin{split}
& (\varepsilon_KK -I_2)\bpm n_{11}n_{12}\\ n_{21}n_{22}\epm+\varepsilon_K(I_2-N)\bpm k_{11}k_{12}\\ k_{21}k_{22}\epm =\\
&=\varepsilon_K \bigg(K\bpm n_{11}n_{12}\\ n_{21}n_{22}\epm+ \varepsilon_N\bpm k_{11}k_{12}\\ k_{21}k_{22}\epm\bigg)-\varepsilon_K\bigg(N\bpm k_{11}k_{12}\\ k_{21}k_{22}\epm+\varepsilon_K\bpm n_{11}n_{12}\\ n_{21}n_{22}\epm\bigg) \in 2\Z^2.
 \end{split}
\end{equation*}

By Lemma \ref{mod2congruence-lemma} we know that 
$$
K\bpm n_{11}n_{12}\\ n_{21}n_{22}\epm+ \varepsilon_N\bpm k_{11}k_{12}\\ k_{21}k_{22}\epm\equiv\bpm (KN)_{11}(KN)_{12}\\ (KN)_{21}(KN)_{22} \epm \hbox{ mod }2\Z^2, $$
$$ \ N\bpm k_{11}k_{12}\\ k_{21}k_{22}\epm+\varepsilon_K\bpm n_{11}n_{12}\\ n_{21}n_{22}\epm \equiv \bpm (NK)_{11}(NK)_{12}\\ (NK)_{21}(NK)_{22} \epm  \hbox{ mod }2\Z^2.
$$
Since $KN=NK$, this proves the claim.
\end{proof}
\begin{thry}\label{ConnCompOfFibre}  Let $\alpha$ be $S^+$-admissible and $(a,b)$ be an $\alpha$-compatible pair and $r\in\N^*$.
The quotient 	$\mathscr{C}_{a,b,r}/{\cal G}_{(a,b)}$ which parameterises  the connected components of the fibre 
$\Phi_{\alpha,r}^{-1}([(a,b)])=\Psi_{\alpha,r}^{-1}([N(\alpha,a,b)])$
 can be naturally identified with the quotient of 
$$\Z_{N,r}\edf \Z^2/(I_2-N)\Z^2+r\Z^2$$
 by the  group  $Z^+_{\GL(2,\Z)}(N)$ acting on $\Z_{N,r}$ by $K*[p]\edf [\varepsilon_K Kp]$.
\end{thry}

\begin{proof}
We have identified 	${\cal G}_{(a,b)}$ with the semi-direct product $Z^+_{\GL(2,\Z)}(N)\ltimes_\psi\Z^2_d$, which obviously fits in the short exact sequence
$$
0\to \Z_d^2\to Z^+_{\GL(2,\Z)}(N)\ltimes_\psi\Z^2_d\to Z^+_{\GL(2,\Z)}(N)\to \{1\}. 
$$
It folows that the quotient $\mathscr{C}_{a,b,r}/{\cal G}_{(a,b)}$ can be identified with the quotient of $\mathscr{C}_{a,b,r}/\Z_d^2$ by $Z^+_{\GL(2,\Z)}(N)$. Taking into account Proposition \ref{Z2-action} (2), we see that, via the bijection  $\mathscr{C}_{a,b,r}\textmap{\simeq}\Z^2/(I_2-N)\Z^2$ induced by $\pi_{a,b,r}$, the quotient $\mathscr{C}_{a,b,r}/\Z_d^2$ is identified with $\Z^2/(I_2-N)\Z^2+r\Z^2$. 
On the other hand, since the first term in (\ref{K*[p]}) belongs to $r\Z^2$, Proposition \ref{action-of-Z+on-Z2} shows that the induced action of $Z^+_{\GL(2,\Z)}(N)$ on $\mathscr{C}_{a,b,r}/\Z_d^2$ is given by $K* [p]=[\varepsilon_K Kp]$, as claimed.
\end{proof}
\begin{re}\label{N-acts-trivially}
The cyclic subgroup $\langle N\rangle \subset Z^+_{\GL(2,\Z)}(N)$ generated by $N$ acts trivially on $\Z_{N,r}$ via $*$. 	
\end{re}

For the description of the connected components of the fibre $\Phi_{\alpha,r}^{-1}([(a,b)])$: By Proposition \ref{the-map-Pi}, the connected component corresponding to the ${\cal G}_{(a,b)}$-orbit of  $[c]_{ab}\in \mathscr{C}_{a,b,r}$ is naturally identified with the quotient $\big(\C/ \frac{b\wedge a}{r}\Z\big)\big/{\cal G}_{(a,b),[c]_{ab}}$. Although is difficult to compute the stabiliser ${\cal G}_{(a,b),[c]_{ab}}$ explicitly in the general case, we have a good control of its image in the automorphism group of the complex manifold $\C/ \frac{b\wedge a}{r}\Z$. The subgroup $T\subset \Aut_h(\C/ \frac{b\wedge a}{r}\Z)$ generated by translations associated with torsion elements and the inversion automorphism $\iota$ can be identified with the semi-direct product $\mu_2\ltimes \Tors$, where 
$$\Tors= \frac{b\wedge a}{r}\Q\,\big/\, \frac{b\wedge a}{r}\Z\simeq \Q/\Z\simeq \{e^{2\pi i q}|\ q\in\Q\}\subset S^1$$
 is the torsion subgroup of  $\C/ \frac{b\wedge a}{r}\Z$, and $\mu_2\edf \{\pm 1\}$ acts on $\Tors$ by $\varepsilon\cdot \tau\edf \varepsilon\tau$. Therefore $T$ fits in the short exact sequence
 $$
 0\to \Tors\to T\textmap{\sigma} \mu_2\to\{1\},
 $$
 where $\sigma(\tau)=1$ ($-1$) if and only if $\tau$ preserves (respectively interchanges) the two ends of $\C/ \frac{b\wedge a}{r}\Z\simeq \C^*$.
 
\begin{re}\label{SubgrOfT} Let $H\subset T$ be a finite subgroup. 
 \begin{enumerate}
 \item If $\sigma(H)=\{1\}$, $H$ is a finite (hence cyclic) subgroup of $\Tors$. 
 \item  If $\sigma(H)=\mu_2$, $H$ a semi-direct product $\mu_2\times C$, where $C\subset \Tors$ is a finite cyclic subgroup. 
 \end{enumerate}
 \end{re} 

\begin{pr}\label{TheConnComp}
 Let $(a,b)$ be an $\alpha$-compatible pair, $[c]_{ab}\in \mathscr{C}_{a,b,r}$. and $H$ the image  of the stabiliser ${\cal G}_{(a,b),[c]_{ab}}$ in $\Aut_h(\C/ \frac{b\wedge a}{r}\Z)$. Then
 \begin{enumerate}
 	\item $H$ is a finite subgroup of $T$.
 	\item  Put $p\edf \pi_{a,b,r}(c)$. We have $\sigma(H)=\mu_2$ if and only if there exists $L\in Z^+_{\GL(2,\Z)}(N)$ with $\varepsilon_L=-1$ such that
 	$$
 	\exists L\in Z^+_{\GL(2,\Z)}(N) \hbox{ such that }\varepsilon_L=-1 \hbox{ and }  (I_2-\varepsilon_L L)p\in r\Z^2+(I_2-N)\Z^2. \eqno{(C)}
 	$$ 

 \end{enumerate} 
 \end{pr}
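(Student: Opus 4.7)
The strategy is to analyse the action of a stabiliser element on the second factor $\C/\frac{b\wedge a}{r}\Z$ via the explicit formula of Remark \ref{Isom-for-Gab}, and then to reduce assertion (2) to the coarser action on $\Z_{N,r}$ described by Theorem \ref{ConnCompOfFibre}.

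For (1), an element $(K,[k])\in{\cal G}_{(a,b)}\simeq Z^+_{\GL(2,\Z)}(N)\ltimes_\psi\Z^2_d$ of the stabiliser acts on $\C/\frac{b\wedge a}{r}\Z$ by
$[t]_{ab}\mapsto [\varepsilon_K t + \varepsilon_K(k\cdot_c t-t)]_{ab}$, by Remark \ref{Isom-for-Gab}. Lemma \ref{k.c-k.t}(1) shows that $k\cdot_c t - t$ is independent of $t$ and lies in $\frac{b\wedge a}{r}\Q$, so its class in $\C/\frac{b\wedge a}{r}\Z$ belongs to $\Tors$; hence $H\subset T$. For finiteness, Lemma \ref{k.c-k.t}(2) implies that this torsion class descends to a function of $[k]\in\Z^2_d$, so the map $(K,[k])\mapsto \big(\varepsilon_K,\ \varepsilon_K(k\cdot_c t-t)\bmod \tfrac{b\wedge a}{r}\Z\big)$ takes values in the finite set $\mu_2\times \Z^2_d$, giving $|H|\le 2d^2$.

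For (2), the key reduction is that the projection $\mathscr{C}_{a,b,r}\twoheadrightarrow \mathscr{C}_{a,b,r}/\Z^2_d\simeq \Z_{N,r}$ is $Z^+_{\GL(2,\Z)}(N)$-equivariant, and by Theorem \ref{ConnCompOfFibre} the induced action on $\Z_{N,r}$ is the simple formula $K*[p]=[\varepsilon_K Kp]$. For the direction $(\Rightarrow)$: given $(L,[k])\in{\cal G}_{(a,b),[c]_{ab}}$ with $\varepsilon_L=-1$, its projection to $\Z_{N,r}$ fixes $[p]$, forcing $(I_2+L)p\in (I_2-N)\Z^2+r\Z^2$, which is exactly condition $(C)$.

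For $(\Leftarrow)$, given $L$ satisfying $(C)$, I will produce the required $[k]$. By Proposition \ref{action-of-Z+on-Z2} the $K$-action on $\Z^2/(I_2-N)\Z^2$ can be rewritten as $K*[p]=[\varepsilon_K Kp+rw_K]$ where
$$w_K\edf \tfrac{1}{2}\Big((\varepsilon_K K-I_2)\bpm n_{11}n_{12}\\ n_{21}n_{22}\epm+\varepsilon_K(I_2-N)\bpm K_{11}K_{12}\\ K_{21}K_{22}\epm\Big)\in\Z^2,$$
integrality being the content of Lemma \ref{mod2congruence-lemma}. Composing with the $[k]$-translation by $r\bpm -k_2\\ k_1\epm$ (formula (\ref{pi(kc)})), the stabiliser equation for $(L,[k])$ becomes $(I_2+L)p \equiv r\big(w_L-L\bpm -k_2\\ k_1\epm\big)\pmod{(I_2-N)\Z^2}$. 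Writing $(I_2+L)p=rv+(I_2-N)u$ with $v,u\in\Z^2$ (possible by $(C)$) and choosing $\bpm -k_2\\ k_1\epm = L^{-1}(w_L-v)\in\Z^2$ (integer since $L\in\GL(2,\Z)$) solves this equation exactly; reducing mod $d$ yields the required $[k]\in\Z^2_d$. The main obstacle is the bookkeeping with the $\frac{r}{2}$-terms in formula (\ref{K*[p]}) of Proposition \ref{action-of-Z+on-Z2}, which are not individually integer for odd $r$; only the mod 2 congruence of Lemma \ref{mod2congruence-lemma} combines them into the integer vector $rw_K$, and this is precisely what makes the clean form of the $K$-action, and hence the explicit construction of $[k]$ above, possible.
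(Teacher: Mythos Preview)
Your proof is correct and follows essentially the same approach as the paper's. Both arguments use Remark~\ref{Isom-for-Gab} to see that $(K,[k])$ acts on $[t]_{ab}$ by $[t]_{ab}\mapsto\varepsilon_K[k\cdot_c t]_{ab}$, reduce to the $\pi_{a,b,r}$-coordinates via Proposition~\ref{action-of-Z+on-Z2} and formula~(\ref{pi(kc)}), and invoke the $r\Z^2$-integrality of the $\tfrac{r}{2}$-term (your $rw_L$) to conclude. The only cosmetic differences are that the paper writes the stabiliser condition as $[k]\cdot[c]_{ab}=L*[c]_{ab}$ with $L=K^{-1}$ and simply observes that the existence of $[k]$ is equivalent to $(C)$, whereas you keep the label $L$ on the semidirect-product element and explicitly solve for $\begin{pmatrix}-k_2\\k_1\end{pmatrix}=L^{-1}(w_L-v)$; and for finiteness in~(1) you cite Lemma~\ref{k.c-k.t}(2) where Proposition~\ref{Z2-action}(2) (which packages that lemma) would be the more direct reference.
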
 
 
 \begin{proof} (1) Formula (\ref{action-semid-prod-on-product}) shows that any element of ${\cal G}_{(a,b),[c]_{ab}}$ acts on $\C/ \frac{b\wedge a}{r}\Z$ by a formula of the form $[t]_{ab}\mapsto \pm [k]\cdot_c [t]_{ab}$, where $[k]$ belongs to the finite set $\Z_d^2$.\\
 
 (2) Taking into account formula (\ref{action-semid-prod-on-product}), we see that $\sigma(H)=\mu_2$ if and only if there exists $([k],K)\in Z^+_{\GL(2,\Z)}(N)\ltimes_\psi \Z_d^2$ leaving $[c]_{ab}$ invariant such that $\varepsilon_K=-1$.
 
 An element $([k],K)\in Z^+_{\GL(2,\Z)}(N)\ltimes_\psi \Z_d^2$  leaves $[c]_{ab}$ invariant, if and only if, putting $L\edf K^{-1}$, we have
  $$
 	[k]\cdot [c]_{ab}= L * [c]_{ab},  
 	$$
 i.e, using the bijection induced by $\pi_{a,b,r}$, Remark \ref{c->p-remark}, and formulae	(\ref{k.c}),(\ref{K*[p]}), 
 $$
 p+ r\bpm -k_2\\ k_1\epm \equiv \frac{r}{2}\bigg( (\varepsilon_LL -I_2)\bpm n_{11}n_{12}\\ n_{21}n_{22}\epm+\varepsilon_L(I_2-N)\bpm l_{11}l_{12}\\ l_{21}l_{22}\epm\bigg)+\varepsilon_L Lp
 $$
 mod $(I_2-N)\Z^2$. It suffices to recall that the first term on the right belongs to $r\Z^2$ by Proposition \ref{action-of-Z+on-Z2}.
 \end{proof}
 
 Now note that the quotient of $\C^*$ by the group of translations associated with 
a cyclic subgroup $C\subset \C^*$ is biholomorphic with $\C^*$, whereas the quotient of $\C^*$ by $\langle \iota\rangle$ is biholomorphic with $\C$.

Therefore
\begin{co}\label{fibre-C-C*}
Let $(a,b)$ be an $\alpha$-compatible pair and $r\in\N^*$. The connected components of   the fibre $\Phi_{\alpha,r}^{-1}([(a,b)])$ are biholomorphic to either $\C^*$ or $\C$. The connected component associated with an orbit ${\cal G}_{(a,b)}[c]_{ab}$, where $[c]_{ab}\in \mathscr{C}_{a,b,r}$, is biholomorphic with $\C$ if and only if, putting $p\edf \pi_{a,b,r}(c)$, condition $(C)$ stated in  Proposition \ref{TheConnComp} is satisfied.
\end{co}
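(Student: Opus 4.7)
The plan is to start from Proposition \ref{the-map-Pi}, which identifies the connected component indexed by the orbit ${\cal G}_{(a,b)}[c]_{ab}$ with the quotient $Y \edf (\C/\frac{b\wedge a}{r}\Z)\big/{\cal G}_{(a,b),[c]_{ab}}$, and then analyze $Y$ through the image $H$ of ${\cal G}_{(a,b),[c]_{ab}}$ in $\Aut_h(\C/\frac{b\wedge a}{r}\Z)$. Since the kernel of the homomorphism ${\cal G}_{(a,b),[c]_{ab}} \to \Aut_h(\C/\frac{b\wedge a}{r}\Z)$ acts trivially on $\C/\frac{b\wedge a}{r}\Z$, one has $Y = (\C/\frac{b\wedge a}{r}\Z)/H$, and by Proposition \ref{TheConnComp}(1) $H$ is a finite subgroup of the group $T = \mu_2 \ltimes \Tors$ described in the excerpt.

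Next, I would transport the picture to $\C^*$ via the biholomorphism $\varphi: \C/\frac{b\wedge a}{r}\Z \to \C^*$ sending $[z]$ to $\exp\bigl(\frac{2\pi i r z}{b \wedge a}\bigr)$. Under $\varphi$, the torsion subgroup $\Tors$ of translations goes bijectively onto the subgroup of all roots of unity in $\C^*$ (acting by multiplication), and the inversion $\iota$ goes to the involution $w \mapsto w^{-1}$. By Remark \ref{SubgrOfT} there are two cases. If $\sigma(H) = \{1\}$, then $\varphi(H)$ is a finite subgroup of the roots of unity, necessarily a cyclic group $\mu_n$ of some order $n$, and the $n$-th power map $\C^* \to \C^*$ realises the quotient $\C^*/\mu_n$ as $\C^*$, so $Y \simeq \C^*$. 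If instead $\sigma(H) = \mu_2$, then $\varphi(H) = \mu_2 \ltimes \mu_n$ for some $n$; quotienting first by $\mu_n$ yields $\C^*$ as above, and the residual inversion $w \mapsto w^{-1}$ then gives a quotient identified with $\C$ via $w \mapsto w + w^{-1}$; so $Y \simeq \C$.

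To match the second claim, I would simply quote Proposition \ref{TheConnComp}(2), which asserts exactly that $\sigma(H) = \mu_2$ is equivalent to condition $(C)$ on $p = \pi_{a,b,r}(c)$. The only substantive verification needed is the behaviour of $\Tors$ and $\iota$ under $\varphi$, which is immediate from the definitions, together with the two elementary holomorphic quotient identifications $\C^*/\mu_n \simeq \C^*$ and $\C^*/\langle w \mapsto w^{-1}\rangle \simeq \C$; the rest is assembly of previously established results, so I do not foresee a real obstacle.
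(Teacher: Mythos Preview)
Your proposal is correct and follows essentially the same approach as the paper: identify the component with $(\C/\frac{b\wedge a}{r}\Z)/H$ via Proposition~\ref{the-map-Pi} and Proposition~\ref{TheConnComp}(1), use Remark~\ref{SubgrOfT} to reduce to the two cases $H\subset\Tors$ or $H\simeq\mu_2\ltimes C$, and observe that the corresponding quotients of $\C^*$ are $\C^*$ and $\C$ respectively, the dichotomy being governed by condition~$(C)$ via Proposition~\ref{TheConnComp}(2). Your version is in fact more explicit than the paper's, spelling out the maps $w\mapsto w^n$ and $w\mapsto w+w^{-1}$ realising the two quotients.
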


\begin{ex}
The case $\theta=3$, i.e. $\alpha=\frac{3+\sqrt{5}}{2}$. 

As explained in the proof of Theorem \ref{Z+Calcul-eng} (3), using \cite[Section 4]{BVdM}	it follows that any $N\in\SL(2,\Z)$ with $\tr(N)=3$ is similar to the matrix $N_0:=\begin{pmatrix}
1 & 1 \\
1 &2	
\end{pmatrix}$. Therefore the set ${\cal N}_\alpha/\sim$ of similarity classes of such matrices reduces to the singleton $\{\Ng_0\}=\{[N_0]\}$.

In this case we have    $\det(I_2-N)=-1$, so the quotient $\Z_{N,r}=\Z^2/(I_2-N)\Z^2+r\Z^2$ is a singleton for any $r\in\N^*$. By Theorem \ref{ConnCompOfFibre} it follows that, for any fixed $r\in \N^*$, there exists a unique deformation class of type II Inoue surfaces $S_{a,b,c,t}^{\alpha,r}$ with $\tr(N(\alpha,a,b))=3$.

By  Theorem \ref{Z+Calcul-eng} we know that $Z^+_{\GL(2,\Z)}(N_0)$ is generated by the matrix $N'_0:=\begin{pmatrix}
0& 1 \\
1 &1	
\end{pmatrix}$ which satisfies $N_0'^2=N_0$ with $\det(N_0')=-1$. 

By Corollary \ref{fibre-C-C*}, it follows that for any fixed $r\in \N^*$, the unique  connected component of the space of isomorphism classes of type II Inoue surfaces of the form  $S_{a,b,c,t}^{\alpha,r}$ is naturally biholomporphic to $\C$.

\end{ex}

\begin{ex}
The case $\theta=4$, i.e. $\alpha=2+\sqrt{3}$.

There exists exactly two matrices with determinant 1 and  trace 4 which are reduced in the sense of  \cite[Definition 4.2]{BVdM}, namely
$$
N'=\bpm 1 & 2 \\ 1 & 3\epm,\ N''=\bpm 1 & 1 \\ 2 & 3\epm.
$$
On the other hand the reduction operator $P$ defined on  \cite[p. 10]{BVdM} maps $N'$ to $N''$. Therefore there exists a unique cycle of reduced matrices with determinant 1 and  trace 4, so, by \cite[Theorem 4.3]{BVdM},   a unique similarity class of such matrices. 

 Therefore the set ${\cal N}_\alpha/\sim$ of similarity classes of such matrices reduces to the singleton $\{\Ng'\}=\{[N']\}$.   The elementary divisors of the matrix 
$$
(I_2-N'\,|\,rI_2)=\begin{pmatrix}
0 &-2 &r&0\\
-1&-2&0&r	
\end{pmatrix}\in M_{2,4}(\Z)
$$
are $\varepsilon_1=1$, $\varepsilon_2=\gcd(2,r)$, so the quotient $\Z_{N',r}=\Z^2/(I_2-N')\Z^2+r\Z^2$ intervening in Theorem \ref{ConnCompOfFibre} is isomorphic as a $\Z$-module to $\Z_{\gcd(2,r)}$. 

On the other hand by  Theorem \ref{Z+Calcul-eng} the positive centraliser $Z^+_{\GL(2,\Z)}(N')$ is generated by $N'$, so it acts trivially on $\Z_{N',r}$ by Remark \ref{N-acts-trivially}. Therefore, by Theorem \ref{ConnCompOfFibre},   for any $r\in\N^*$ there are exactly $\gcd(2,r)$ deformation classes of type II Inoue surfaces $S_{a,b,c,t}^{\alpha,r}$, and, by Corollary \ref{fibre-C-C*}, both connected components of the space of isomorphism classes of such type II Inoue  surfaces is naturally biholomporphic to $\C^*$.

\end{ex}

\subsection{The classification of type III Inoue surfaces} \label{ClassTypeIII-section}

The classification of type III Inoue surfaces can be obtained using the same method as for type II surfaces: 
For a triple $(a,b,c)\in {\cal T}_{\alpha,r}$ put $[c]_{ab}\edf c+\frac{b\wedge a}{r}\Z^2$ and we define
\begin{align*}
\mathscr{C}_{a,b,r}&\edf {\cal C}_{a,b,r}\bigg/\frac{b\wedge a}{r}\Z^2,  \\
\mathscr{T}_{\alpha,r}&\edf \left\{(a,b,[c]_{ab})\big|\,\   (a,b)\in {\cal P}_\alpha,\ [c]_{ab}\in \mathscr{C}_{a,b,r}\right\} \simeq  {\mathcal{T}_{\alpha,r}}/\Z^2 .
\end{align*}

We let $\GL(2,\Z)$ act on ${\cal T}_{\alpha,r}$ via the same  formula (\ref{ABC}) as for type II surfaces, and with this definition the analogue of Theorem \ref{Quotient-approx} for type III surfaces remains true. Therefore the set of subgroups $G\subset\Aff(U)$ defining type III Inoue surfaces is identified with the quotient ${\cal T}_{\alpha,r}/\GL(2,\Z)$.

On the other hand we have to use a different formula for defining a $\Z^2$-action on ${\cal T}_{\alpha,r}$; this new formula is provided by the following analogue of Lemma \ref{tau(gi)tau(-1)}:
\begin{lm}\label{tau(gi)tau(-1)-}
Let $(a,b,c)$, $(a',b',c')\in {\cal T}_{\alpha,r}$ and put 
$$
\left\{
\begin{array}{lll}
g_0=g_0(\alpha),& g_i=g_i(a,b,c),&g_3=g_3(a,b,r),\\
g_0'=g_0(\alpha),& g_i'=g_i(a',b',c'),& g_3'=g_3(a',b',r)
\end{array}\right..$$
 Let $k=(k_1,k_2)\in\Z^2$, $\bpm s_0\\s_1\\s_2\epm \in\Z^3$ and $\tau\in \Aff^1_1(U)$,   
$
\bpm 
w\\
z 	
\epm \textmap{\tau} 
\bpm 
1  &0\\
\lambda	&1
\epm 
\bpm 
w\\
z 	
\epm +\bpm 
u\\ \zeta	
\epm$. 
The following conditions are equivalent:
\begin{enumerate}[(i)]
	\item 
$
\tau g'_0\tau^{-1}= g_0g_1^{k_1}g_2^{k_2}g_3^{s_0},\ \tau\circ  g_i'\circ \tau^{-1}=g_ig_3^{s_i} \hbox{ for }1\leq i\leq 2,\ \tau\circ  g_3'\circ \tau^{-1}=g_3$.
\item We have: 
$$
\left\{\begin{array}{ccl}
\lambda&=&\frac{kb}{\alpha-1}.\\
u&=&\frac{\alpha (k a)}{1-\alpha}.\\
2\zeta&=&\lambda u(1+\alpha)	-\frac{k_{1}(k_{1}-1)}{2}a_1b_1-\frac{k_{2}(k_{2}-1)}{2}a_2b_2\\&&-k_{1}k_{2}b_1a_2-kc-\frac{s_0}{r}b\wedge a.\\
a'&=&a.\\
b'&=&b.\\
c'&=&c+\frac{kb}{1+\alpha}a+\frac{\alpha(ka)}{1-\alpha}b+ \frac{b\wedge a}{r} s,\hbox{ where }s\edf \bpm s_1\\s_2\epm.\end{array}
 \right.
$$
\end{enumerate}

\end{lm}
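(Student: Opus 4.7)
The proof proceeds by direct computation, in close analogy with the proof of Lemma \ref{tau(gi)tau(-1)} for type II surfaces. The strategy is to expand both sides of each equality in condition (i) as explicit affine transformations of $U$, and then use the fact that two affine transformations coincide if and only if their linear parts and their translation parts agree. This reduces the claim to matching a finite collection of scalars.

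First, I would compute $\tau g_i' \tau^{-1}$ for $0 \leq i \leq 3$ by composing the explicit affine formulas. For $i=1,2,3$ the resulting expressions are identical to those in the type II case, since the generators $g_1', g_2', g_3'$ depend only on $(a',b',c',r)$ and not on $g_0'$; in particular, matching the coefficients of the linear part of $g_i'$ ($i=1,2$) will immediately force $a'=a$, $b'=b$, and matching the constant part of the second coordinate will give the displayed formula for $c'$ (up to a translation by $(b \wedge a)/r \cdot s$). The one genuinely new ingredient is the computation of $\tau g_0' \tau^{-1}$: since $g_0(\alpha)$ has linear part $\mathrm{diag}(\alpha,-1)$, the sign flip in the second coordinate interacts with the translation part $(u,\zeta)$ of $\tau$ so as to produce an extra constant $2\zeta$ (where in type II only $-(\alpha-1)\lambda u + t'$ appeared).

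Next, I would expand the right-hand sides $g_0 g_1^{k_1} g_2^{k_2} g_3^{s_0}$ and $g_i g_3^{s_i}$. The intermediate step is computing $g_1^{k_1} g_2^{k_2}$ using the commutation relation $[g_1,g_2]=g_3^r$ from Remark \ref{com-rel-III}: an inductive calculation yields
\[
g_1^{k_1} g_2^{k_2}\bpm w\\ z\epm = \bpm w + ka \\ z + (kb)w + kc + \tfrac{k_1(k_1-1)}{2}a_1 b_1 + \tfrac{k_2(k_2-1)}{2}a_2 b_2 + k_1 k_2 b_1 a_2 \epm,
\]
and composing on the left with $g_0$ simply negates the second coordinate. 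This is where the main sign subtlety appears compared to the type II case.

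Finally, matching linear and translation parts in the four required equalities gives the complete system: the linear parts of $\tau g_i' \tau^{-1} = g_i g_3^{s_i}$ for $i=1,2$ force $(a',b')=(a,b)$ and fix $\lambda$ in terms of $kb$; the linear part of $\tau g_0' \tau^{-1} = g_0 g_1^{k_1}g_2^{k_2}g_3^{s_0}$ fixes $u$ in terms of $ka$; the translation parts of the $i=1,2$ equalities give $c'$; and the translation part of the $g_0$ equality, after substituting the already-determined $\lambda$ and $u$, yields the displayed formula for $\zeta$. The implication (ii) $\Rightarrow$ (i) is then obtained by reversing the computation. The main obstacle is purely bookkeeping — tracking the signs induced by $g_0$ and keeping the quadratic-in-$k$ terms straight — but no new conceptual idea beyond what was used for Lemma \ref{tau(gi)tau(-1)} is required.
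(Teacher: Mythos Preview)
Your approach is correct and is exactly what the paper has in mind: the paper leaves both this lemma and its type II analogue Lemma~\ref{tau(gi)tau(-1)} as unstated ``elementary computations'', and expanding both sides as explicit affine maps of $U$ and matching linear and translation parts is the intended route.

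One small slip in your final paragraph: the equations $\tau g_i'\tau^{-1}=g_ig_3^{s_i}$ for $i=1,2$ yield $a'=a$, $b'=b$ and the relation $c_i'=c_i+b_i u-\lambda a_i+\tfrac{b\wedge a}{r}s_i$, but they do \emph{not} determine $\lambda$. Both $\lambda$ and $u$ are fixed by the $g_0$-equation: the $(2,1)$-entry of its linear part gives $\lambda$ (and here the sign flip in $g_0(\alpha)$ changes the constraint from $\lambda(\alpha-1)=kb$ in type II to $\lambda(\alpha+1)=-kb$), while the first translation coordinate gives $u$. Substituting these back into the $c'$ relation and into the second translation coordinate then produces the displayed formulae for $c'$ and for $2\zeta$. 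This misattribution would be caught immediately upon carrying out the computation and does not affect the validity of the argument.
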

Note  that, whereas for type II surfaces, both conditions in Lemma  \ref{tau(gi)tau(-1)} were independent of the coefficient $\zeta$  of $\tau$, this is no longer true for type III surfaces. In  Lemma \ref {tau(gi)tau(-1)-} both conditions do depend on $\zeta$.

We obtain an action
\begin{equation}
\label{k-action-on-c-and-classes-}
(k,  c )   \mapsto  k\cdot    c\edf  c+\frac{kb}{1+\alpha}a+\frac{\alpha(ka)}{1-\alpha}b= c+ b\wedge a(I_2+N)^{-1} \bpm -k_2\\ k_1\epm \end{equation}
on ${\cal C}_{a,b,r}$ and an induced action $(k,  [c]_{ab})\mapsto  k\cdot    [c]_{ab}\edf [k\cdot c]_{ab} $ on $\mathscr{C}_{a,b,r}$.

Noting that for type III surfaces we have $\det(I_2+N)=\theta\in\N^*$, we see that the obtained $\Z^2$ action on ${\cal T}_{\alpha,r}$ descends to a $\Z_\theta^2$-action.  In the same way as for type II surfaces we obtain  actions of the semi-direct product     $\GL(2,\Z)\ltimes_\psi \Z^2_\theta$ and of the product 
$$
{\cal G}\edf (\R^*_+\times\R^*)\times (\GL(2,\Z)\ltimes_\psi \Z^2_\theta)
$$
on ${\cal T}_{\alpha,r}$, and the following classification theorem:

\begin{thry}\label{main-} Let $\alpha$ be $S^-$-admissible and $(a,b,c)$, $(a',b',c')\in {\cal T}_{\alpha,r}$. 
The groups $G(\alpha,a,b,c,r)$, $G(\alpha,a',b',c',r)$ are conjugate in $\Aff(U)$ if and only if 
\begin{equation*}
\begin{split}
(a',b',[c']_{a'b'})\in & \big((\R^*_+\times\R^*)\times (\GL(2,\Z)\ltimes_\psi \Z_\theta^2)\big)\cdot (a,b,[c]_{ab}).	\\
=& \big((\R^*_+\times\R^*)\times (\GL(2,\Z)\ltimes_\varphi \Z^2)\big)\cdot (a,b,[c]_{ab}).
\end{split}	
\end{equation*}
\end{thry}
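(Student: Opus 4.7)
\

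The plan is to mirror the proof of Theorem \ref{main}, adapting for the structural differences in the type III setting: the generator $g_0(\alpha)$ is an involution in the $z$-direction, so the commutation relation $g_0g_3g_0^{-1}=g_3^{-1}$ replaces $g_0g_3g_0^{-1}=g_3$, and accordingly the relevant affine action uses $(I_2+N)$ rather than $(I_2-N)$. A second structural difference — which accounts for the disappearance of the $t$-parameter — is that, in Lemma~\ref{tau(gi)tau(-1)-}, the translation coefficient $\zeta$ of $\tau\in\Aff^1_1(U)$ is now determined by the other data (via the third equation in (ii)), whereas in the type II analogue (Lemma~\ref{tau(gi)tau(-1)}) it was free.

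For the forward direction ($\Rightarrow$), assume $\theta G(\alpha,a',b',c',r)\theta^{-1}=G(\alpha,a,b,c,r)$. First, I would establish the type III analogue of Lemma~\ref{SetOfCanonical}, describing all generating systems $(\gamma_0,\gamma_1,\gamma_2,\gamma_3)$ of $G\edf G(\alpha,a,b,c,r)$ satisfying the conditions (C1) $\gamma_3$ generates $G\cap\T(U)$, (C2) $(\gamma_1,\gamma_2,\gamma_3)$ generates $G\cap\Aff^1_1(U)$ with $[\gamma_1,\gamma_2]=\gamma_3^r$, and (C3) the first diagonal entry of the linear part of $\gamma_0$ equals $\alpha$. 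Setting $\gamma_i\edf\theta g_i'\theta^{-1}$, this family satisfies (C1)--(C3), so one obtains $K=(k_{ij})\in\GL(2,\Z)$, $k=(k_1,k_2)\in\Z^2$, and $(s_0,s_1,s_2)\in\Z^3$ such that
$$
\gamma_0=g_0g_1^{k_1}g_2^{k_2}g_3^{s_0},\ \gamma_1=g_1^{k_{11}}g_2^{k_{12}}g_3^{s_1},\ \gamma_2=g_1^{k_{21}}g_2^{k_{22}}g_3^{s_2},\ \gamma_3=g_3^{\det(K)}.
$$
Then I would decompose $\theta=\tau\circ\delta$ with $\delta(w,z)=(\mu w,\nu z)$ the diagonal part and $\tau\in\Aff^1_1(U)$; comparing $(2,1)$-entries of the conjugation identities forces $\nu\in\R^*$. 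The conjugates $g_i''\edf\delta g_i'\delta^{-1}$ then define a type III datum with $(a'',b'',c'')=(\mu,\nu\mu^{-1})\cdot(a',b',c')$. Applying the implication (i)$\Rightarrow$(ii) of Lemma~\ref{tau(gi)tau(-1)-} to the conjugation of the $g_i''$ by $\tau$ (after absorbing commutators $[g_1,g_2]=g_3^r$ into the $g_3$-exponents, exactly as in the type II proof) yields explicit formulas for $a'',b'',c''$ in terms of $(K,k)$, which translate to
$$
(\mu,\nu\mu^{-1})\cdot(a',b',[c']_{a'b'})=(kK^{-1})\cdot\bigl(K\cdot(a,b,[c]_{ab})\bigr),
$$
i.e.\ the required orbit relation.

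For the reverse direction ($\Leftarrow$), given $((\mu,\rho),(K,[k]))\in{\cal G}$ witnessing the orbit relation, I would build $\theta=\tau\circ\delta$ explicitly: take $\delta(w,z)=(\mu w,\mu\rho z)$, and define $\tau$ by the formulas for $\lambda,u,\zeta$ prescribed in condition (ii) of Lemma~\ref{tau(gi)tau(-1)-}, where $s_0\in\Z$ is picked to match the lift $t=0$ convention of the type III construction. The implication (ii)$\Rightarrow$(i) of the lemma, applied after the analogue of Theorem~\ref{Quotient-approx} for type III identifies $G(\alpha,A,B,C,r)$ with $G(\alpha,a,b,c,r)$ (where $(A,B,C)=K\cdot(a,b,c)$), yields $\theta G(\alpha,a',b',c',r)\theta^{-1}=G(\alpha,a,b,c,r)$.

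The main obstacle is the bookkeeping around the now-constrained parameter $\zeta$: in the type II argument the freedom in $\zeta$ was absorbed into the $t$-parameter, while here one must instead verify that the third equation of Lemma~\ref{tau(gi)tau(-1)-}(ii) is consistent, namely that for every choice of integer $s_0$ there is a well-defined $\zeta\in\C$ making $\tau$ an element of $\Aff^1_1(U)$, and conversely that any $\tau$ arising from a conjugation in $\Aff(U)$ satisfies this relation modulo the $s_0$-freedom. This forces us to absorb the ambiguity in $\zeta$ precisely into $\langle g_3\rangle$, recovering the fact that the conjugacy class of the subgroup does not depend on $t$ (consistent with the absence of a $t$-parameter in type III). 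Once this is in place, the remainder of the proof is a direct transcription of the type II argument, with the identity $(I_2-N)^{-1}$ uniformly replaced by $(I_2+N)^{-1}$ and the $\Z^2_d$-action replaced by the $\Z^2_\theta$-action coming from the $k$-action~\eqref{k-action-on-c-and-classes-}.
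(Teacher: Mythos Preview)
Your proposal is correct and follows essentially the same approach as the paper, which explicitly states that the type III classification ``can be obtained using the same method as for type II surfaces'' and then only records the modifications (Lemma~\ref{tau(gi)tau(-1)-}, the new $k$-action~\eqref{k-action-on-c-and-classes-} via $(I_2+N)^{-1}$, and the $\Z_\theta^2$-descent) before stating Theorem~\ref{main-} without further proof. Your sketch in fact supplies more detail than the paper does at this point, and your identification of the $\zeta$-constraint as the main bookkeeping issue matches the paper's own remark that ``in Lemma~\ref{tau(gi)tau(-1)-} both conditions do depend on $\zeta$''.
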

Let now
$$
\Psi_{\alpha,r}:\mathscr{Q}_{\alpha,r}^-\edf {\cal T}_{\alpha,r}/{\cal G}\to {\cal N}_\alpha/{\cal G}={\cal N}_\alpha/\GL(2,\Z)={\cal N}_\alpha/\sim,
$$
$$
\Phi_{\alpha,r}:\mathscr{Q}_{\alpha,r}^-\to {\cal P}_{\alpha,r}^-/{\cal G}={\cal P}_{\alpha,r}^-/(\R^*_+\times\R^*)\times \GL(2,\Z)
$$
be the obvious maps. Using now the  methods of section \ref{fibre-over-simclass}, we obtain

\begin{thry}\label{FibreOverSimilClass-} Let $\alpha$ be $S^-$-admissible, $(a,b)$ be an $\alpha$-compatible pair and $r\in\N^*$.
The quotient 	$\mathscr{C}_{a,b,r}/{\cal G}_{(a,b)}$ which parameterises  the connected components of the fibre 
$$\Phi_{\alpha,r}^{-1}([(a,b)])=\Psi_{\alpha,r}^{-1}([N(\alpha,a,b)])$$
 can be naturally identified with the quotient of 
$$\Z_{N,r}\edf \Z^2/(I_2+N)\Z^2+r\Z^2$$
 by the  group  $Z^+_{\GL(2,\Z)}(N)$ acting on $\Z_{N,r}$ by $K*[p]\edf [\varepsilon_K Kp]$.
\end{thry}
\section{Appendix: The positive centraliser  \texorpdfstring{$Z^+_{\GL(2,\Z)}(N)$}{Z}}\label{Z+-section}

 The ``positive centraliser"  $Z^+_{\GL(2,\Z)}(N)$ associated with a matrix $N\in {\cal N}_\alpha$ ($N\in {\cal N}_\alpha^-$), where $\alpha$ is $S^+$- (respectively $S^-$-) admissible plays a crucial role in  our results.  This group is always infinite cyclic. This follows from:

\begin{pr}\label{Z+(N)}
Let $N\in \GL(2,\Z)$ with  $\Spec_\R(N)\cap]1,+\infty[\ne\emptyset $. Fix $\alpha\in\Spec(N)\cap]1,+\infty[$ and an eigenvector $a\in \R^2\setminus\{0\}$ for the eigenvalue $\alpha$. 
\begin{enumerate}
	\item The group $Z_{\SL(2,\Z)}^+(N)\edf \{K\in\SL(2,\Z)|\ KN=NK,\ Ka\in \R^*_+a\}$ is infinite cyclic.
	\item Suppose $\alpha\not\in\Q$. Then $Z_{\GL(2,\Z)}^+(N)\edf \{K\in\GL(2,\Z)|\ KN=NK,\ Ka\in \R^*_+a\}$ is  also infinite cyclic.
\end{enumerate}
\end{pr}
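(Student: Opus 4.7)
The plan is to show, for each of the two groups in question, that the ``eigenvalue map'' $\vartheta:K\mapsto \vartheta(K)$ defined by $Ka=\vartheta(K)a$ is an \emph{injective} group morphism into $(\R^*_+,\times)$ whose image is a \emph{discrete nontrivial} subgroup; composition with $\log$ then identifies the group with a discrete nontrivial subgroup of $(\R,+)$, which must be infinite cyclic.

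First I would record the basic structure. Any $K$ commuting with $N$ preserves the eigenspaces $\R a$ and $\R b$ of $N$, where $b$ is an eigenvector for the second eigenvalue $\beta=\det(N)/\alpha\in\R$ (distinct from $\alpha$ since $\alpha>1\ge|\beta|$). So $Kb=\eta(K)b$ for a uniquely determined $\eta(K)\in\R^*$, related to $\vartheta(K)$ by $\vartheta(K)\eta(K)=\det K$. Since $(\vartheta(K),\eta(K))$ determines $K$ via the isomorphism $\R[N]\cong\R\oplus\R$, injectivity of $\vartheta$ on $Z^+_{\SL(2,\Z)}(N)$ is immediate (as $\eta=\vartheta^{-1}$ there), settling the injectivity in (1). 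For (2), suppose $K_1,K_2\in Z^+_{\GL(2,\Z)}(N)$ have the same $\vartheta$; setting $L:=K_1K_2^{-1}$, one has $\vartheta(L)=1$, and the only nontrivial case is $\det L=-1$. Writing $L=aI+bN$ (as $L\in \R[N]$), the constraints $a+b\alpha=1$, $a+b\beta=-1$ give $b=2/(\alpha-\beta)$; but $(\alpha-\beta)^2=\tr(N)^2-4\det(N)\in\Z$ is not a perfect square (else $\alpha\in\Q$), so $b\notin\Q$. Since $\alpha\notin\Q$ also forces both off-diagonal entries of $N$ to be nonzero (otherwise an eigenvalue would be an integer diagonal entry), the integrality requirement $bN_{12}\in\Z$ is impossible, contradicting $L\in M_2(\Z)$. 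Hence $\vartheta$ is injective on $Z^+_{\GL(2,\Z)}(N)$ as well.

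Next I would establish discreteness and nontriviality. If $\vartheta(K_n)\to 1$, then $\tr(K_n)=\vartheta(K_n)+\det(K_n)\vartheta(K_n)^{-1}$ tends to $\pm 2$; being an integer, it is eventually constant equal to $\pm 2$, and this combined with $\vartheta(K_n)\to 1$ and the injectivity above pins down $K_n=I$ for large $n$. Nontriviality is easy: $N$ itself lies in $Z^+_{\GL(2,\Z)}(N)$ with $\vartheta(N)=\alpha>1$, while for (1), one uses $N$ if $\det N=1$ and $N^2\in\SL(2,\Z)$ with $\vartheta(N^2)=\alpha^2>1$ otherwise. A discrete nontrivial subgroup of $(\R^*_+,\times)$ is generated by its smallest element exceeding $1$, giving the infinite cyclicity.

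The main obstacle is the injectivity step in part (2): one must exclude the hypothetical ``sign-flip'' element that acts as $+1$ on $\R a$ and $-1$ on $\R b$, and this is precisely where the irrationality hypothesis $\alpha\notin\Q$ enters decisively. (As an aside, $\alpha\notin\Q$ is in fact automatic: a rational eigenvalue of an integer matrix is integral, and $\alpha\in\Z_{\geq 2}$ with $\alpha\beta=\pm1$ forces $\beta=\tr(N)-\alpha\in\Z$ with $|\beta|<1$, hence $\beta=0$, contradicting $\det N=\pm1$.)
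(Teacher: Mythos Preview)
Your proof is correct and follows the same overall strategy as the paper: embed the group into $(\R^*_+,\times)$ via the eigenvalue map $\vartheta$, show injectivity, and identify the image as a nontrivial discrete (equivalently, closed proper) subgroup, hence infinite cyclic. One minor slip: when $\det K_n=-1$ the trace $\vartheta(K_n)-\vartheta(K_n)^{-1}$ tends to $0$, not $-2$, so the possible limit values are $2$ and $0$ rather than $\pm 2$; this does not affect the argument, since the trace is still eventually a fixed integer and the same conclusion follows.

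The execution differs from the paper in two places worth noting. For (1) the paper shows the image of $\vartheta$ is \emph{closed} rather than discrete, writing it as $\{u\in\R^*_+\mid K^u_{(a,b)}\in M_2(\Z)\}$, the preimage of the closed set $M_2(\Z)$ under the continuous map $u\mapsto K^u_{(a,b)}$; your trace argument is a clean alternative. For the injectivity in (2), the paper argues that if the sign-flip matrix $L$ (eigenvalues $1,-1$ on $a,b$) were rational, the eigenline $\R a$ would contain a rational vector $q$, and $Nq=\alpha q$ would force $\alpha\in\Q$; it then concludes via the short exact sequence $1\to Z^+_{\SL(2,\Z)}(N)\to Z^+_{\GL(2,\Z)}(N)\to\{\pm1\}$ (finitely generated abelian of rank $1$, torsion free). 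Your approach---writing $L=xI+yN$ and showing $y=2/(\alpha-\beta)\notin\Q$ directly---is more computational but equally valid, and reusing the discreteness argument for (2) avoids the exact-sequence step.
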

\begin{proof}
(1) Note first that the group morphism 
$$
\vartheta|_{Z_{\SL(2,\Z)}^+(N)}: Z_{\SL(2,\Z)}^+(N)\to \R^*_+
$$	
is injective, so it induces an isomorphism $Z_{\SL(2,\Z)}^+(N)\textmap{\simeq} \vartheta(Z_{\SL(2,\Z)}^+(N))\subset \R^*_+ $. Recall  that any {\it closed} proper subgroup of $\R$ is either trivial or infinite cyclic. Since the Lie groups $(\R^*_+,\cdot)$, $(\R,+)$ are isomorphic, the same will hold for the subgroups of $(\R^*_+,\cdot)$.

Since the subgroup $\vartheta(Z_{\SL(2,\Z)}^+(N))$ is non-trivial (because it contains $\alpha>1$) and is proper (because is countable),  it suffices to prove that $\vartheta(Z_{\SL(2,\Z)}^+(N))$ is closed in $\R^*_+$. 

Let $b$ be an eigenvector for the second eigenvalue $\beta=\det(N)\alpha^{-1}$ of $N$. For   any $u\in \R^*_+$ let
$$
K_{(a,b)}^u\edf \begin{pmatrix}
a_1 &b_1\\
a_2& b_2	
\end{pmatrix}\begin{pmatrix}
u &0\\
0&u^{-1}	
\end{pmatrix}\begin{pmatrix}
a_1 &b_1\\
a_2& b_2	
\end{pmatrix}^{-1}\in M_2(\R)
$$
be the (unique) real (2,2)-matrix which admits $a$, $b$ as eigenvectors with eigenvalues $u$, $u^{-1}$ respectively. 

We obviously have
$$
\vartheta(Z_{\SL(2,\Z)}^+(N))= \{u\in\R^*_+| \  K_{(a,b)}^u \in M_2(\Z) \}
$$
which shows that $\vartheta(Z_{\SL(2,\Z)}^+(N))$ is closed in $\R^*_+$, because the map $u\mapsto K_{(a,b)}^u$ is continuous and $M_2(\Z)$ is closed in $M_2(\R)$. 
\vspace{2mm}\\
(2)  We claim that, under our  assumptions, the group morphism 
$$
\vartheta:Z_{\GL(2,\Z)}^+(N)\to \R^*_+
$$
is still injective. Indeed, we have
$$
\ker(\vartheta)\subset  \{I_2,   L \},
$$
where $L \in M_2(\R)$ is the matrix of the endomorphism of $\R^2$ which admits $a$, $b$ as eigenvectors with eigenvalues 1, -1 respectively. We will show that  $L\not\in M_2(\Q)$ so this matrix cannot be an element of $Z_{\GL(2,\Z)}^+(N)$. Indeed, if  $L$ belonged to  $M_2(\Q)$, its eigenspace  $\R a$ associated with the (rational) eigenvalue 1 would admit a rational generator $ q=\bpm q_1\\q_2\epm \in \Q^2\setminus\{0\}$. Since  $\R a$ is also the eigenspace of $N$ associated with the eigenvalue $\alpha$, it would follow $Nq=\alpha q$  so, since  $N\in M_2(\Q)$, it would follow $\alpha\in\Q$ which contradicts our assumption. This proves that $\vartheta$ is injective as claimed.

Since $\vartheta$ is injective, $Z_{\GL(2,\Z)}^+(N)$ is isomorphic to a subgroup of $(\R^*_+,\cdot)\simeq(\R,+)$, so it is   torsion free. On the other hand, this abelian group fits in the short exact sequence
$$
\{1\}\to Z_{\SL(2,\Z)}^+(N)\to Z_{\GL(2,\Z)}^+(N)\textmap{\det}\{\pm 1\}\to 1,
$$
which, taking into account (1), shows that it is a finitely generated abelian group of rank 1. Since it is torsion free, it is  infinite cyclic.
\end{proof}
Note that 
\begin{re}\label{ImageOfZ+inPSL-PGL}
The natural group morphisms 
$$Z_{\SL(2,\Z)}^+(N)\to  \PSL(2,\Z),\ Z_{\GL(2,\Z)}^+(N)\to \PGL(2,\Z)$$
 are injective and their images coïncide with the centralisers 
 $$Z_{\PSL(2,\Z)}([N]),\ Z_{\PGL(2,\Z)}([N])$$
  of $[N]$ in $\PSL(2,\Z)$, respectively $\PGL(2,\Z)$. 	
\end{re}
Therefore  Lemma \ref{Z+(N)} gives:

\begin{co}
In the conditions of Lemma \ref{Z+(N)}, the centraliser $Z_{\PSL(2,\Z)}(N)$ of $[N]$ in $\PSL(2,\Z)$
  is  infinite cyclic. If $\alpha\not\in\Q$, the centraliser $Z_{\PGL(2,\Z)}([N])$ of $N$ in $\PGL(2,\Z)$ is also infinite cyclic.
\end{co}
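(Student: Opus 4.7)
The plan is to derive the corollary directly from Remark \ref{ImageOfZ+inPSL-PGL} and Proposition \ref{Z+(N)}. The remark provides group isomorphisms $Z^+_{\SL(2,\Z)}(N)\cong Z_{\PSL(2,\Z)}([N])$ and (when $\alpha\notin\Q$) $Z^+_{\GL(2,\Z)}(N)\cong Z_{\PGL(2,\Z)}([N])$, induced by the natural projections $\SL(2,\Z)\to\PSL(2,\Z)$ and $\GL(2,\Z)\to\PGL(2,\Z)$. Proposition \ref{Z+(N)} asserts that the two positive centralisers on the left are infinite cyclic. Transporting along the isomorphisms yields at once that both $Z_{\PSL(2,\Z)}([N])$ and, under the hypothesis $\alpha\notin\Q$, $Z_{\PGL(2,\Z)}([N])$ are infinite cyclic.

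All the substantive content therefore lies in verifying Remark \ref{ImageOfZ+inPSL-PGL}. Injectivity of the morphism $Z^+_{\SL(2,\Z)}(N)\to\PSL(2,\Z)$ (and of its $\GL$ analogue) is immediate: the kernel is contained in $\{\pm I_2\}$, and $-I_2\notin Z^+$ since $(-I_2)a=-a\notin\R^*_+ a$. For the coincidence of images with the projective centralisers, one lifts a representative $K\in\SL(2,\Z)$ (resp.\ $\GL(2,\Z)$) of an element commuting with $[N]$, obtaining $KNK^{-1}=\varepsilon N$ with $\varepsilon\in\{\pm 1\}$. Taking traces gives $\tr N=\varepsilon\tr N$; the second eigenvalue of $N$ equals $\beta=\pm\alpha^{-1}$ (by $\det N=\pm 1$), so $\tr N=\alpha+\beta\neq 0$ because $\alpha>1>|\beta|$. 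Hence $\varepsilon=1$, so $K$ commutes with $N$, and replacing $K$ by $-K$ if needed (which does not change its class modulo $\pm I_2$) places $K$ in $Z^+$.

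The main, and fairly minor, obstacle is precisely the trace argument ruling out $\varepsilon=-1$; once that is in place, both surjectivity statements in Remark \ref{ImageOfZ+inPSL-PGL} are formal, and the corollary reduces to transporting the infinite cyclicity from Proposition \ref{Z+(N)} through the established isomorphisms.
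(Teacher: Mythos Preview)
Your proposal is correct and follows exactly the route the paper takes: the corollary is deduced immediately from Remark \ref{ImageOfZ+inPSL-PGL} together with Proposition \ref{Z+(N)}, and the paper gives no further argument. Your additional verification of Remark \ref{ImageOfZ+inPSL-PGL} (injectivity via $-I_2\notin Z^+$, and surjectivity via the trace argument ruling out $KNK^{-1}=-N$) is sound and fills in what the paper leaves implicit.
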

This result answers a question discussed by experts on the Mathoverflow forum, see the article  ``Centralizers  of Elements in $\SL(2,\Z)$" \cite{MathF}.

A natural problem: given a matrix $N$ satisfying the assumptions of Proposition \ref{Z+(N)}, specify a generator of the cyclic group $Z^+_{\GL(2,\Z)}(N)$. The following result answer this question in a particular case:
 \begin{thry} \label{Z+Calcul-eng}
  Let $\N\in\SL(2,\Z)$ with $\theta\edf \tr(N)\geq 3$. Suppose that 
$$\mathrm{gcd}(n_{12},n_{21},n_{22}-n_{11})=1.$$
Then:
\begin{enumerate}
\item $Z^+_{\SL(2,Z)}(N)=\{N^k|\ k\in\Z\}$, so $Z^+_{\SL(2,Z)}(N)$ is the  cyclic subgroup of $\SL(2,\Z)$ generated by $N$.
	\item If  $\theta>3$, then $Z^+_{\GL(2,Z)}(N)=Z^+_{\SL(2,Z)}(N)$.
	\item  If $\theta=3$, then $Z^+_{\GL(2,Z)}(N)$ is generated by a matrix $N'\in \GL(2,\Z)$ with $N'^2=N$, $\det(N')=-1$.
\end{enumerate} 
\end{thry}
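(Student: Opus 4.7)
The plan is to convert Theorem \ref{Z+Calcul-eng} into a question about units in the quadratic order $\Z[\alpha]$, where $\alpha=(\theta+\sqrt{\theta^2-4})/2>1$ is the eigenvalue of $N$ associated to the chosen eigenvector $a$. Since $\theta\geq 3$, $N$ has the two distinct real eigenvalues $\alpha,\alpha^{-1}$, so any $K$ commuting with $N$ has the form $K=xI_2+yN$ with $x,y\in\R$. The hypothesis $\gcd(n_{12},n_{21},n_{22}-n_{11})=1$ enters precisely here: the integrality of $yn_{12}$, $yn_{21}$, $y(n_{22}-n_{11})$ together with B\'ezout forces $y\in\Z$, whence $x\in\Z$. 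Thus $Z_{\GL(2,\Z)}(N)\subseteq\Z[N]$, and the ring isomorphism $\Z[N]\to\Z[\alpha]$, $N\mapsto\alpha$, turns $\det(xI_2+yN)$ into the norm $N_{\Q(\alpha)/\Q}(x+y\alpha)=x^2+\theta xy+y^2$ and the positivity $\vartheta(K)>0$ into $x+y\alpha>0$. Hence $Z^+_{\GL(2,\Z)}(N)$ is identified with the group of \emph{positive} units of $\Z[\alpha]$, and $Z^+_{\SL(2,\Z)}(N)$ with its subgroup of positive units of norm $+1$.

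The core step is then to identify the fundamental positive unit $\varepsilon>1$ of $\Z[\alpha]$. Writing $\varepsilon=a+b\alpha$, and expanding $a^2+\theta ab+b^2=\pm 1$, the identity $\varepsilon-\varepsilon'=b\sqrt{\theta^2-4}$ combined with $|\varepsilon'|<1$ yields the Pell-type bound $\varepsilon>|b|\sqrt{\theta^2-4}-1$. The elementary inequality $3\sqrt{\theta^2-4}\geq\theta+2$ (equivalent to $2\theta^2-\theta-10\geq 0$, valid for $\theta\geq 3$) shows that this already forces $\varepsilon>\alpha$ whenever $|b|\geq 2$. Hence the fundamental unit has $|b|\leq 1$, and an explicit enumeration finishes the analysis: $b=0$ gives only $\varepsilon=\pm 1$; $b=-1$ gives no positive unit with $\varepsilon>1$ (for $a\geq\theta+1$ one has $N_{\Q(\alpha)/\Q}(a-\alpha)=a(a-\theta)+1\geq\theta+2$); and for $b=1$, $\varepsilon>1$ forces $a\geq 0$ when $\theta\geq 4$, with minimum $\varepsilon=\alpha$ of norm $+1$, whereas for $\theta=3$ the additional value $a=-1$ is admissible, giving $\varepsilon=\alpha-1=:\phi$ of norm $2-\theta=-1$ with $\phi^2=\alpha$. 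Thus the fundamental positive unit of $\Z[\alpha]$ is $\alpha$ for $\theta\geq 4$ and $\phi=\alpha-1$ for $\theta=3$.

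Translating back now yields the theorem. For $\theta\geq 4$ every positive unit is a power of $\alpha$ and every unit has norm $+1$, so $Z^+_{\GL(2,\Z)}(N)=Z^+_{\SL(2,\Z)}(N)=\langle N\rangle$, proving (1) and (2). For $\theta=3$ the positive units are $\phi^k$, with the norm-$(+1)$ subgroup $\langle\phi^2\rangle=\langle\alpha\rangle$; this gives $Z^+_{\SL(2,\Z)}(N)=\langle N\rangle$ (completing (1)), while $Z^+_{\GL(2,\Z)}(N)=\langle\phi\rangle$ corresponds in matrix form to $\langle N-I_2\rangle$. The identities $(N-I_2)^2=N$ and $\det(N-I_2)=\chi_N(1)=2-\theta=-1$ follow from Cayley--Hamilton $N^2-3N+I_2=0$, so one may take $N'=N-I_2$ in statement (3). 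The main technical point is the Pell-type estimate in the second paragraph, namely showing cleanly that no unit with $|b|\geq 2$ lies below $\alpha$; everything else is then a direct translation between the matrix and ring pictures.
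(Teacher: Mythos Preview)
Your argument is correct and follows the same overall strategy as the paper: both use the gcd hypothesis to show $Z_{\GL(2,\Z)}(N)\subset\Z[N]\simeq\Z[\alpha]$ and then reduce the question to determining the positive units of $\Z[\alpha]$ (equivalently, the integer solutions of $x^2+\theta xy+y^2=\pm 1$).

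The execution differs in two places. For the unit computation the paper quotes results from \cite{AA} on the Pell-type equations $x^2+\theta xy+y^2=\pm 1$, whereas you give a self-contained argument via the bound $\varepsilon>|b|\sqrt{\theta^2-4}-1$ to force $b=1$ for the fundamental unit; this is cleaner and avoids the external reference. For part~(3) the paper passes through the similarity $N\sim N_0=\begin{psmallmatrix}1&1\\1&2\end{psmallmatrix}$ and exhibits the explicit square root $N_0'=\begin{psmallmatrix}0&1\\1&1\end{psmallmatrix}$, whereas your formula $N'=N-I_2$ (corresponding to the unit $\alpha-1$) works intrinsically for every $N$ with $\theta=3$, by Cayley--Hamilton. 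Your route is arguably more direct here. One cosmetic point: the sentence ``for $b=1$, $\varepsilon>1$ forces $a\geq 0$ when $\theta\geq 4$'' is literally false ($a=-1$ gives $\varepsilon=\alpha-1>1$), but what you need---and what your norm analysis actually establishes---is that among \emph{units} with $b=1$ and $\varepsilon>1$ the only possibility for $\theta\geq 4$ is $a=0$; you may want to rephrase.
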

\begin{proof}
(1) Since the characteristic polynomial of $N$ has no multiple roots in $\C$, it follows that by a classical theorem in matrix theory \cite[Theorem 4, p. 27]{We} that
$$
Z_{M_{2,2}(\C)}(N)=\{P(N)|\ P(X)\in\C[X]\}=\langle I_2,N\rangle_\C.
$$
Therefore
$$
Z_{M_{2,2}(\Q)}(N)=Z_{M_{2,2}(\C)}(N)\cap M_{2,2}(\Q)=\langle I_2,N\rangle_\C\cap M_{2,2}(\Q)=\langle I_2,N\rangle_\Q,
$$
so
\begin{align}\label{Z{M{2,2}(Z)}(N)}
Z_{M_{2,2}(\Z)}(N)=&Z_{M_{2,2}(\Q)}(N)\cap M_{2,2}(\Z)=\nonumber\\
=&\{x N+yI_2|\ (x,y)\in \Q^2,\  x N+yI_2\in M_{2,2}(\Z)\}.	
\end{align}

Let $x=\frac{m}{n}$, $y=\frac{p}{q}$ with $\gcd(m,n)=\gcd(p,q)=1$ be such that $L\edf xN+y I_2$ is an integer matrix. We have
$$
\frac{m}{n}n_{12}= l_{12}\in\Z,\ \frac{m}{n}n_{21}= l_{21}\in\Z,\ \frac{m}{n}(n_{22}-n_{11})=l_{22}-l_{11},
$$
so $n| m n_{12}$, $n| m n_{21}$, $n| m (n_{22}-n_{11})$. But $\mathrm{gcd}(n,m)=1$, so $n| n_{12}$, $n| n_{21}$ and $n| (n_{22}-n_{11})$, so 
$n| \mathrm{gcd}(n_{12},n_{21}, (n_{22}-n_{11}))$.
We assumed $\mathrm{gcd}(n_{12},n_{21}, (n_{22}-n_{11}))=1$,  so  $n=\pm 1$, so $x\in\Z$, so $yI_2=L-xN\in M_{2,2}(\Z)$, so $y\in\Z$. Therefore, by (\ref{Z{M{2,2}(Z)}(N)}), we obtain
\begin{equation}\label{Z{M{2,2}(Z)}(N)new}
Z_{M_{2,2}(\Z)}(N)=\{xN+y I_2|\ (x,y)\in\Z^2\}=\langle I_2,N\rangle_\Z.	
\end{equation}
Putting $\alpha\edf \frac{\theta+\sqrt{\theta^3-4}}{2}$, we obtain
$
\det(xN+y I_2)=(x\alpha+y)(x\alpha^{-1}+y)=x^2+\theta xy+y^2,
$
so, by (\ref{Z{M{2,2}(Z)}(N)new}),
\begin{equation}\label{Z_{GL(2,Z)}(N)new}
\begin{split}
Z_{\SL(2,\Z)}(N)&=\{xN+y I_2|\ (x,y)\in\Z^2,\ x^2+\theta xy+y^2= 1\}, \\
Z_{\GL(2,\Z)}(N)&=\{xN+y I_2|\ (x,y)\in\Z^2,\ x^2+\theta xy+y^2=\pm 1\}. 
\end{split}
\end{equation}

For $k\in \Z$ let  $(a_k,b_k)\in\Z^2$ be defined by the condition
$
N^k=a_kN+b_k I_2$, and let $(u_n)_{n\in\N}$ be the integer sequence defined by the order 2 recurrence
$$
u_0=0,\ u_1=1,\ u_{k+2}=\theta u_{k+1}-u_k.
$$
Using the formulae $N^2=\theta N-I_2$, $(N^{-1})^2=\theta N^{-1}-I_2$, it is easy to prove that
$$
(a_k,b_k)=\left\{
\begin{array}{ccc}
(u_k, -u_{k-1})&\rm for &k\geq 1\phantom{.}\\
(-u_{-k},u_{-k+1} ) &\rm for &k\leq 0.
\end{array}
\right.
$$
By \cite[Theorem 5.6.1, p. 130]{AA} and \cite[section 6.3]{AA}, it follows that, for $\theta\geq 3$, the set of solutions of the diophantine equation $x^2+\theta xy+y^2= 1$ is precisely 
$$
\{ \pm (-u_n,u_{n+1})|\ n\in\N\}\cup \{\pm (u_{n+1},-u_n)|\ n\in \N\}=\{\pm(a_k,b_k)|\ k\in\Z\}.
$$ 
Therefore, the first formula in (\ref{Z_{GL(2,Z)}(N)new}) combined with $N^k=a_kN+b_k I_2$ gives
$$
Z_{\SL(2,\Z)}(N)=\{\pm N^k|\ k\in\Z\},
$$
which implies that $Z^+_{\SL(2,\Z)}(N)=\{N^k|\ k\in\Z\}$. This proves (1).  
\\ \\
(2) follows by second formula in (\ref{Z_{GL(2,Z)}(N)new}) taking into account that,  by \cite[Theorem 6.3.1, p. 150]{AA}, for $\theta>3$, the diophantine equation $x^2+\theta xy+y^2=-1$ has no solution.  
\\ \\
(3) Using  \cite[Theorem 4.3]{BVdM} it follows that any matrix $N\in \SL(2,\Z)$ with $\tr(N)=3$ is similar to  $N_0:=\begin{pmatrix}
1 & 1 \\
1 &2	
\end{pmatrix}$. Indeed,  there exists a unique   integer matrix  with trace 3 and determinant 1 which is reduced in the sense of \cite[Definition 4.2]{BVdM}. Note that the matrix $N'_0:=\begin{pmatrix}
0& 1 \\
1 &1	
\end{pmatrix}$ is a square root of $N_0$,  belongs to $Z^+_{\GL(2,\Z)}(N_0)$ and  has $\det(N_0')=-1$. Let $K$ be  a generator of the cyclic group $Z^+_{\GL(2,\Z)}(N_0)$, and let $s\in\Z$ such that $N_0'=K^s$. Since $K^2\in Z^+_{\SL(2,\Z)}(N_0)$ (which is generated by $N_0$), there exists $t\in \Z$ such that $K^2=N_0^t$. It follow
$$
N_0=N_0'^2=(K^s)^2=K^{2s}=(K^2)^s=N_0^{ts},
$$
so $ts=1$, so $s=\pm 1$, which proves that $N_0'=K^s$ is also a generator of $Z^+_{\GL(2,\Z)}(N_0)$.

\end{proof}

\end{document}